\newtheorem{thm}{Theorem}[section]
\newtheorem{prop}[thm]{Proposition}
\newtheorem{lem}[thm]{Lemma}
\newtheorem{cor}[thm]{Corollary}
\newtheorem{defn}[thm]{Definition}
\theoremstyle{remark}
\newtheorem{rem}[thm]{\bf Remark}
\newcommand{\la}{\mathcal}
\newcommand{\mbb}{\mathbb}
\newcommand{\fra}{\mathfrak}
\newcommand{\p}{\fra{p}}
\newcommand{\q}{\fra{q}}
\newcommand{\ds}{\displaystyle}
\newcommand{\ov}{\overline}
\newcommand{\N}{\mathbb{N}}
\newcommand{\Z}{\mbb{Z}}
\newcommand{\Q}{\mbb{Q}}
\newcommand{\D}{\mathcal{D}}
\newcommand{\define}{\stackrel{\text{def}}{=}}
\numberwithin{equation}{section}
\begin{document}
\title{On elementary equivalence of rings with a finitely generated additive group}

\author{ Alexei Miasnikov,
 Mahmood Sohrabi\footnote{Address: Stevens Institute of Technology, Department of Mathematical Sciences, Hoboken, NJ 07087, USA. Email: msohrab1@stevens.edu }}

\maketitle
\begin{abstract}In this paper we provide a complete algebraic characterization of elementary equivalence of rings with a finitely generated additive group in the language of pure rings. The rings considered are arbitrary otherwise.
  \\
{\bf 2010 MSC:} 03C60\\
{\bf Keywords:} Ring, Elementary Equivalence,  Largest Ring of a Bilinear Map
\end{abstract}

\section{Introduction}
This paper continues the authors' efforts~\cite{MS, MS2010, MS2016}, in providing a comprehensive and uniform approach to various model-theoretical questions on algebras and nilpotent groups. By a \emph{scalar ring} we mean a commutative associative unitary ring. Assume $A$ is a scalar ring. We say that $R$ is an \emph{$A$-algebra} if $R$ is abelian group equipped with an $A$-bilinear binary operation. We use the term \emph{ring} for a $\Z$-algebra where $\Z$ is the ring of rational integers, reserving the term scalar ring for commutative associative unitary rings. The ring $R$ is said to be a \emph{finite dimensional $\Z$-algebra} or an \emph{FDZ-algebra} for short if the additive group $R^+$ of the ring $R$ is finitely generated as an abelian group.  
 
The main problem we tackle here is to characterize the elementary equivalence of  FDZ-algebras via a complete set of elementary invariants. The invariants will be purely algebraic.

\subsection{Statements of the main results}\label{approach:sec}
In this paper the language $L$ denotes the language of pure rings without a constant for multiplicative identity. That is because an arbitrary ring may not have a unit. By $L_1$ we mean the usual language of rings with identity. 

 For us an $A$-module $M$ is a two-sorted structure $\langle M, A, s \rangle$, where $M$ is  an abelian group, $A$ is a scalar ring and $s$ is the predicate describing the action of $A$ on $M$. Denote the language by $L_2$. We often drop $s$ from our notation. Since the scalar ring are always assumed to be commutative we do not specify whether the modules are left or right modules.  

Here is our first main result.

 \begin{thm}\label{elemmod:thm}Let $A$ be an FDZ-scalar ring and let $M$ be a finitely generated $A$-module. Then
 there exists a sentence $\psi_{M,A}$ of the language $L_2$ such that $\langle M,A\rangle\models \psi_{M,A}$ and for
 any FDZ-scalar ring $B$ any finitely generated $B$-module $N$, we
 have
 $$\langle N,B\rangle\models \psi_{M,A} \Leftrightarrow \langle N,B\rangle
 \cong \langle M,A\rangle.$$\end{thm}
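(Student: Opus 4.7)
The plan is to build $\psi_{M,A}$ as an existential sentence that names additive generators $\bar a = (a_1, \ldots, a_s)$ of $A^+$ and $A$-module generators $\bar m = (m_1, \ldots, m_r)$ of $M$, records the multiplication table of $A$ and the $A$-action on $M$ through these generators, and then asserts enough further first-order conditions to force the induced evaluation map to be an isomorphism. Since $A^+$ is a finitely generated abelian group, the structure theorem gives $A^+ \cong \Z^k \oplus T$ with $T$ finite of exponent $N$, and I may fix: (a) ring structure constants $a_i a_j = \sum_\ell \gamma_{ij}^\ell a_\ell$ with $\gamma_{ij}^\ell \in \Z$; (b) a finite list $R_A$ of additive relations presenting $A^+$; (c) action constants $a_i m_j = \sum_\ell \delta_{ij}^\ell m_\ell$; and (d) a finite generating set $R_M$ of the syzygy module of $\bar m$ in $A^r$, which exists because $A$ is Noetherian.

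The sentence $\psi_{M,A}$ takes the form $\exists \bar a'\, \exists \bar m'\, \phi(\bar a', \bar m')$, where $\phi$ asserts: (i) the structure constants (a) and (c) hold for $\bar a'$ and $\bar m'$; (ii) the relations $R_A$ and $R_M$ are satisfied; (iii) the first-order module-generation statement $\forall n \in N\, \exists b_1, \ldots, b_r \in B\ (n = \sum_j b_j m_j')$; and (iv) a conjunction of first-order conditions forcing the additive isomorphism type of $B^+$ to agree with $A^+$ via the map $a_i \mapsto a_i'$. Given (i)--(iv) in an FDZ pair $\langle N, B\rangle$, the assignment $a_i \mapsto a_i'$, $m_j \mapsto m_j'$ extends to a well-defined homomorphism $\langle M, A\rangle \to \langle N, B\rangle$ by (i)--(ii); surjectivity holds on the module sort by (iii) and on the scalar sort by (iv); injectivity follows because any extra relation on $\bar a'$ or $\bar m'$ would contradict (iv) or the rigidity of $R_M$.

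The principal obstacle is realizing (iv) as a single first-order sentence, since naively the statement ``$B^+$ is $\Z$-spanned by $\bar a'$'' involves quantifying over integer coefficients. The resolution exploits the FDZ hypothesis on $B$: for $N = \exp(T)$, the subgroup $\{b \in B : Nb = 0\}$ is first-order definable, and one requires it to have precisely the iso type of the finite group $T$ (expressible because $T$ is finite). The torsion-free rank $k$ of $B^+$ modulo this subgroup is then captured by conditions of the form $|B/(pB + \{b : Nb = 0\})| = p^k$ for suitably chosen primes $p$ coprime to $|T|$. The delicate part is excluding hidden torsion of order coprime to $N$; this is handled by combining further first-order size conditions with the rigidity imposed by the ring multiplication table on $\bar a'$ and the $A$-action on $\bar m'$---once the evaluation map is forced to be injective on the specified generators, no additional summand in $B^+$ can be consistent with (i). Assembling these finitely many conditions into $\phi$ yields the desired sentence $\psi_{M,A}$ characterizing $\langle M, A\rangle$ up to isomorphism within the class of FDZ scalar rings equipped with a finitely generated module.
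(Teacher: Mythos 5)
Your overall template---name additive generators of $A$ and module generators of $M$, record the structure constants and relations, and then force the evaluation map to be an isomorphism---is the same as the paper's, but the step you yourself flag as ``the principal obstacle'' (making condition (iv) first-order) is exactly where the proposal breaks down, and your proposed resolution does not work. Requiring that the definable torsion subgroup of $B^+$ be isomorphic to $T$ and that $|B/(pB+\{b:Nb=0\})|=p^k$ pins down the abstract isomorphism type of $B^+$, but it does not force the tuple $\bar a'$ to generate $B^+$ additively: the subgroup generated by $\bar a'$ can be a proper subgroup of finite index, in which case the evaluation map $A\to B$ is injective with the right structure constants but not surjective, and your sentence is then satisfied by a non-isomorphic $\langle N,B\rangle$, violating the right-to-left implication. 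Concretely, take $A=\Z[2i]$ with generators $a_1=1$, $a_2=2i$ (so $a_2^2=-4a_1$) and $B=\Z[i]$ with $a_1'=1$, $a_2'=2i$: the structure constants and relations transfer, $B^+\cong\Z^2\cong A^+$ with the correct rank and (trivial) torsion, yet $\Z[2i]\not\cong\Z[i]$. The appeal to ``rigidity imposed by the multiplication table'' cannot exclude this, because the multiplication table only constrains the subring generated by $\bar a'$, never its additive index in $B$. This finite-index phenomenon is not a technicality one can wave away; it is precisely the obstruction that makes the paper's main theorem for non-unital FDZ-algebras weaker than isomorphism, and the content of Theorem~\ref{elemmod:thm} is that for unital scalar rings and their modules it can be overcome.

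The paper overcomes it with genuinely heavier machinery, which is what you would need to supply in place of your condition (iv). It first interprets, uniformly in $Th(A)$, a decomposition of zero into a product of finitely generated prime ideals $0=\p_1\cdots\p_m$ with fixed characteristics (Section~\ref{Prime-decom:sec}), then interprets the fraction field of each characteristic-zero quotient $A/\p_i$ and invokes Julia Robinson's definability of $\Z$ in algebraic number fields to define the subring $\Z\cdot 1+\p_i$ inside $A$ (Lemmas~\ref{julia:lem}, \ref{P5}, \ref{P6}). Once $\Z\cdot 1$ is definable on each quotient of the resulting $\mathfrak{P}$-series (where the acting ring is $\Z$ or $\Z/p\Z$), the statement ``these elements form a $\Z$-pseudo-basis of prescribed period'' becomes first-order, and additive generation of $A^+$---and then of $M$ via the induced series $M\geq \p_1M\geq\cdots\geq 0$---is forced level by level (Proposition~\ref{basedef}). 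Only after generation is genuinely expressible does the structure-constant bookkeeping you describe complete the proof.
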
 
 
 The proof of the theorem appears at the end of Section~\ref{Z-interpret:sec}. Indeed Theorem~\ref{elemmod:thm} implies the next three statements. The first two state the same result.  
 \begin{cor}\label{scalarrings:cor} For any FDZ-scalar ring $A$ there exists a formula $\psi_A$ of $L_1$
such that $A\models \psi_A$ and for FDZ-scalar ring $B$ we have
$$B\models \psi_A \Leftrightarrow A\cong B.$$\end{cor}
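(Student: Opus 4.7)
The plan is to deduce the corollary directly from Theorem~\ref{elemmod:thm} by viewing a scalar ring as a finitely generated module over itself. Given an FDZ-scalar ring $A$, I would first consider the two-sorted $A$-module $\langle A, A, s\rangle$ in which both sorts carry the underlying set of $A$ and the action $s$ is just ring multiplication. Since $A$ is unital, $A$ is cyclic as an $A$-module (generated by $1$), so Theorem~\ref{elemmod:thm} immediately yields an $L_2$-sentence $\psi_{A,A}$ characterizing $\langle A, A\rangle$ up to isomorphism among all $\langle N, B\rangle$ with $B$ an FDZ-scalar ring and $N$ a finitely generated $B$-module.

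The next step is to turn $\psi_{A,A}$ into an $L_1$-sentence. For any unital ring $B$ in $L_1$, the two-sorted structure $\langle B, B, \cdot\rangle$ with the action given by ring multiplication is canonically and trivially interpretable in $B$ itself: both sorts share the domain of $B$, and the action predicate is simply ring multiplication. I would therefore obtain $\psi_A$ from $\psi_{A,A}$ by replacing quantifiers over each sort with ring quantifiers and rewriting every occurrence of $s(a,m)$ as $a \cdot m$. By construction, for every FDZ-scalar ring $B$ we have $B \models \psi_A$ iff $\langle B, B, \cdot\rangle \models \psi_{A,A}$, which by Theorem~\ref{elemmod:thm} is equivalent to the existence of a two-sorted isomorphism $\langle B, B, \cdot\rangle \cong \langle A, A, \cdot\rangle$.

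Finally, I need to verify that this last condition is equivalent to $B \cong A$ as rings in $L_1$. A two-sorted isomorphism is by definition a pair $(\phi, f)$ where $\phi : B \to A$ is a ring isomorphism of scalars and $f : B \to A$ is a group isomorphism of the module parts satisfying $f(bc) = \phi(b) f(c)$ for all $b,c\in B$; the existence of $\phi$ alone already witnesses $B \cong A$ as rings. Conversely, any ring isomorphism $\phi : B \to A$ yields the two-sorted isomorphism $(\phi, \phi)$. I do not expect a genuine obstacle here: the entire substance is carried by Theorem~\ref{elemmod:thm}, and the only remaining work is a careful but routine translation of $\psi_{A,A}$ from $L_2$ to $L_1$ together with this compatibility check between the two notions of isomorphism.
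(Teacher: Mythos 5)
Your proposal is correct and is essentially the derivation the paper intends: the paper simply asserts that Corollary~\ref{scalarrings:cor} follows from Theorem~\ref{elemmod:thm}, and your argument --- viewing $A$ as the cyclic $A$-module $\langle A,A\rangle$, translating $\psi_{A,A}$ into $L_1$ via the trivial interpretation of $\langle B,B,\cdot\rangle$ in $B$, and checking that a two-sorted isomorphism of $\langle B,B\rangle\cong\langle A,A\rangle$ gives (and is given by) a ring isomorphism $B\cong A$ --- supplies exactly the routine details the paper omits.
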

\begin{cor}\label{scalarrings:cor2}Let $\mathcal{K}$ be the class of all FDZ-scalar rings. Then any $A$
from $\mathcal{K}$ is finitely axiomatizable inside
$\mathcal{K}$.\end{cor}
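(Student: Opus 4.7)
The plan is to observe that Corollary~\ref{scalarrings:cor2} is an immediate consequence of Corollary~\ref{scalarrings:cor}: the single sentence $\psi_A$ produced there already serves as a finite axiomatization of $A$ inside the class $\mathcal{K}$. Given any $A\in\mathcal{K}$, I would apply Corollary~\ref{scalarrings:cor} to obtain a sentence $\psi_A$ of $L_1$ such that $A\models\psi_A$ and, for every FDZ-scalar ring $B$, one has $B\models\psi_A$ if and only if $A\cong B$.

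I would then take $T=\{\psi_A\}$ as the finite axiomatization and verify both directions of the required equivalence for rings $B\in\mathcal{K}$. If $B\in\mathcal{K}$ satisfies $\psi_A$, then Corollary~\ref{scalarrings:cor} gives $A\cong B$, and in particular $A\equiv B$. Conversely, if $B\in\mathcal{K}$ and $A\equiv B$, then since $\psi_A\in \textrm{Th}(A)$ and $\psi_A$ is a sentence, we obtain $B\models\psi_A$. Hence $\psi_A$ axiomatizes the theory of $A$ modulo the class $\mathcal{K}$, which is exactly the meaning of finite axiomatizability inside $\mathcal{K}$. At this stage there is essentially no obstacle: all the substantive work has been absorbed into the construction of $\psi_A$ carried out in Corollary~\ref{scalarrings:cor} (and ultimately in Theorem~\ref{elemmod:thm}), so the present deduction is purely formal.
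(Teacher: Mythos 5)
Your deduction is correct and matches the paper's intent exactly: the paper itself notes that Corollary~\ref{scalarrings:cor} and Corollary~\ref{scalarrings:cor2} ``state the same result,'' both being immediate consequences of Theorem~\ref{elemmod:thm}, and your formal verification that $\psi_A$ axiomatizes $\mathrm{Th}(A)$ within $\mathcal{K}$ is precisely the intended argument.
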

Let us denote by $L_3$ the first-order language of two-sorted algebras. An algebra $\langle  C, A\rangle$ consists of an arbitrary ring $C$, and the scalar ring $A$ (and a predicate describing the scalar multiplication which is dropped from the notation). As mentioned it is actually a corollary of Theorem~\ref{elemmod:thm}. We provide a brief of it at the beginning of Section~\ref{main:sec}.  
\begin{thm}\label{elem-iso-alg:thm} Let $\mathcal{A}$ be the class of all two-sorted algebras $\langle C, A\rangle$ where $C$ is finitely generated as an $A$-module and $A^+$ is finitely generated as an abelian group. For each  $\langle C, A\rangle\in \mathcal{A}$ there exists a formula $\phi_{C,A}$ of $L_3$ such that $\langle C, A\rangle\models \phi_{C,A}$ and for any $\langle D, B\rangle \in \mathcal{A}$,
 $$\langle D, B\rangle\models \phi_{C,A} \Leftrightarrow \langle C,A\rangle \cong \langle D,B\rangle$$
 as two-sorted algebras.    \end{thm}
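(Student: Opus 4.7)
The plan is to derive Theorem~\ref{elem-iso-alg:thm} from Theorem~\ref{elemmod:thm} by augmenting the module characterization with a first-order multiplication table over chosen generators. Fix $\langle C,A\rangle\in\mathcal{A}$, additive generators $\alpha_1,\dots,\alpha_m$ of $A^+$, and $A$-module generators $c_1,\dots,c_n$ of $C$. I record the multiplication on $C$ by integers $n_{ijkl}\in\Z$ satisfying
\[
c_ic_j\ =\ \sum_{k=1}^{n}\Bigl(\sum_{l=1}^{m}n_{ijkl}\alpha_l\Bigr)c_k\qquad(1\le i,j\le n).
\]

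The crucial ingredient is a parameterized refinement of Theorem~\ref{elemmod:thm}: a formula $\widehat{\psi}_{C^+,A}(\overline{y},\overline{z})$ of $L_2$ (with $m$ free variables in the scalar sort and $n$ in the module sort) such that $\langle C^+,A\rangle\models\widehat{\psi}_{C^+,A}(\overline{\alpha},\overline{c})$ and such that whenever $B$ is an FDZ-scalar ring, $N$ is a finitely generated $B$-module, and $\overline{\beta}\in B^{m}$, $\overline{d}\in N^{n}$ witness $\widehat{\psi}_{C^+,A}(\overline{\beta},\overline{d})$, there is a module isomorphism $f=(f_A,f_M)\colon\langle N,B\rangle\to\langle C^+,A\rangle$ with $f_A(\beta_l)=\alpha_l$ and $f_M(d_i)=c_i$. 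I expect such a refinement to be implicit in the interpretation-with-parameters construction used to prove Theorem~\ref{elemmod:thm}: the parameters in that proof typically encode precisely a chosen generating tuple.

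Given this upgraded module characterization, I would set
\[
\phi_{C,A}\ :=\ \exists\overline{y}\,\exists\overline{z}\,\Bigl(\widehat{\psi}_{C^+,A}(\overline{y},\overline{z})\ \wedge\ \bigwedge_{i,j}z_iz_j=\sum_{k}\Bigl(\sum_{l}n_{ijkl}\,y_l\Bigr)z_k\Bigr).
\]
Then $\langle C,A\rangle\models\phi_{C,A}$, witnessed by $\overline{\alpha},\overline{c}$. Conversely, suppose $\langle D,B\rangle\in\mathcal{A}$ satisfies $\phi_{C,A}$ with witnesses $\overline{\beta},\overline{d}$. The parameterized statement supplies a module isomorphism $f\colon\langle D^+,B\rangle\to\langle C^+,A\rangle$ sending $\overline{\beta}\mapsto\overline{\alpha}$ and $\overline{d}\mapsto\overline{c}$. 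Applying $f_M$ to $d_id_j=\sum_{k}(\sum_{l}n_{ijkl}\beta_l)d_k$ yields
\[
f_M(d_id_j)\ =\ \sum_{k}\Bigl(\sum_{l}n_{ijkl}\alpha_l\Bigr)c_k\ =\ c_ic_j\ =\ f_M(d_i)f_M(d_j).
\]
Since $d_1,\dots,d_n$ generate $D$ as a $B$-module and ring multiplication is $B$-bilinear, this identity on the generating tuple forces $f_M(xy)=f_M(x)f_M(y)$ for all $x,y\in D$, so $(f_A,f_M)$ is an isomorphism of two-sorted algebras.

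The main obstacle is the parameterized upgrade of Theorem~\ref{elemmod:thm}: the printed statement only asserts that a module isomorphism exists, while here I need to control where a distinguished generating tuple is sent. Resolving this requires revisiting the interpretation in Section~\ref{Z-interpret:sec} to confirm it is uniform in, and respects, the generating data. Once this is in hand, the remainder of the argument is straightforward pattern-matching via $A$-bilinearity, so conceptually the theorem for algebras is simply the theorem for modules plus a finite multiplication table.
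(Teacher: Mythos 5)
Your proposal is correct and follows essentially the same route as the paper: the paper likewise derives Theorem~\ref{elem-iso-alg:thm} from the module case by appending to $\psi_{M,A}$ the multiplicative structure constants $t''_k(u_iu_j)$, where $u_iu_j=\sum_k t''_k(u_iu_j)u_k$ for a definable $\Z$-pseudo-basis $\bar{u}$ of $C$. The parameterized refinement you flag as the main obstacle is exactly what the proof of Theorem~\ref{elemmod:thm} already supplies --- via Proposition~\ref{basedef} the formula defines the set of all $\Z$-pseudo-bases of the given periods and structure constants, and any witnessing tuple is carried to the fixed one by an isomorphism --- so you need only take your generating tuples $\bar{\alpha},\bar{c}$ to be those adapted pseudo-bases rather than arbitrary generators.
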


 To state the main result of the paper we need to introduce some more definitions and notation. Consider an arbitrary ring $R$. Define the \emph{two-sided annihilator ideal} of $R$ by $$Ann(R)=\{x\in R: xy=yx=0, \forall y\in R\}.$$ By $R^2$ we denote the ideal of $R$ generated by all products $x\cdot y$ (or $xy$ for short) of elements of $R$. 
 
Consider a scalar ring $A$ and let $R$ be an $A$-algebra. Assume $I$ is an ideal of $R$. Let
 $$Is_A(I)\define \{ x\in R: ax\in I,\text{ for some }a\in A\setminus\{0\}\}.$$
 It is easy to show that $Is_A(I)$ is an ideal in $R$. We simply denote $Is_\Z(I)$ by $Is(I)$. 
 
 Now assume $R$ is an FDZ-algebra. An \emph{addition} $R_0$ of $R$ is a direct complement of the ideal $\Delta(R)\define Is(R^2)\cap Ann(R)$ in $Ann(R)$. Such a complement exists in this situation since $Ann(R)$ is a finitely generated abelian group and $Ann(R)/\Delta(R)$ is free abelian. It is clear that $R_0$ is actually an ideal of $R$. The quotient $R_F\define R/R_0$ is called the \emph{foundation} of $R$ associated to the addition $R_0$.
 
 Finally for an FDZ-algebra $R$ set $M(R)\define Is(R^2+Ann(R))$ and $N(R)\define Is(R^2)+Ann(R)$. Note that $M(R)/N(R)$ is a finite abelian group.
  
 We are now ready to state the main result of this paper.
 
 \begin{thm}\label{mainnice:thm} Assume $R$ and $S$ are FDZ-algebras. Then the following are equivalent.

 \begin{enumerate}
 \item $R\equiv S$ as arbitrary rings.
 \item Either $M(R)=N(R)$ and $R\cong S$, or there exists a monomorphism $\phi:R\to S$ of rings and additions $R_0$ and $S_0$ of $R$ and $S$ respectively such that 
 \begin{enumerate}
 
 \item $\phi$ induces an isomorphism $R/R_0\cong S/S_0$,
 \item $\phi$ induces an isomorphism $\dfrac{M(R)}{N(R)}\cong \dfrac{M(S)}{N(S)}$,
 \item $\phi$ restricts to a monomorphism from $R_0$ into $S_0$, where the index $[S_0:\phi(R_0)]$ is finite and prime to the index $[M(R):N(R)]\neq 1$. 
 \end{enumerate}\end{enumerate}\end{thm}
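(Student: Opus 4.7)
The plan is to prove the two directions separately, making essential use of Theorem~\ref{elem-iso-alg:thm} to pin down the foundation and its auxiliary data. The starting observation is that $R^2$, $Ann(R)$, $Is(R^2)$, and $Is(R^2+Ann(R))$ are all first-order definable ideals of $R$ in $L$ (directly in the interpreted module structure supplied by Theorem~\ref{elem-iso-alg:thm}); consequently $M(R)$ and $N(R)$ are definable, and the finite quotient $M(R)/N(R)$ is interpretable as a finite abelian group. Elementary equivalence $R\equiv S$ therefore transfers each of these ideals and forces $M(R)/N(R)\cong M(S)/N(S)$, since elementarily equivalent finite structures are isomorphic. This splits the argument into the case $M(R)=N(R)$ (equivalently $[M(R):N(R)]=1$) and the case $M(R)\neq N(R)$.

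For the direction $(1)\Rightarrow(2)$, assume $R\equiv S$. I would interpret each FDZ-algebra $R$ as a two-sorted algebra $\langle R,A_R\rangle$ over a uniformly definable scalar ring $A_R$ (for instance, the image of $\Z$ together with a definable replacement of the centroid of the foundation). Finite generation of $R^+$ yields finite generation of $R$ as an $A_R$-module, so Theorem~\ref{elem-iso-alg:thm} provides a formula of $L_3$ whose models in $\mathcal{A}$ are exactly the two-sorted algebras isomorphic to $\langle R,A_R\rangle$. Pulling this formula back to $L$ via the interpretation produces a single $L$-sentence pinning down $\langle R,A_R\rangle$ up to isomorphism. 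When $M(R)=N(R)$, applying this to $S$ forces $R\cong S$. When $M(R)\neq N(R)$, the same strategy applied to the foundations yields an isomorphism $\bar\phi\colon R/R_0\to S/S_0$, which I then lift to a ring monomorphism $\phi\colon R\to S$ by choosing compatible preimages in $S$ of generators of $R$ modulo $S_0$ and absorbing additive discrepancies inside $S_0\subseteq Ann(S)$, using that $S_0$ is torsion-free. Clauses (a) and (b) follow from the construction, while clause (c) remains to be analysed.

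For the direction $(2)\Rightarrow(1)$, the subcase $M(R)=N(R)$ and $R\cong S$ is trivial. In the other subcase, given the monomorphism $\phi$ and the data (a)--(c), I would argue that the first-order theory of an FDZ-algebra is determined by three pieces of information: the isomorphism class of its foundation as a two-sorted algebra over its interpretable scalar ring; the finite invariant $M/N$; and the $p$-adic additive profile of the addition for each prime $p$. Clause (a) matches the first, clause (b) matches the second, and clause (c) takes care of the third: because $R_0\subseteq Ann(R)$, any $L$-formula involving multiplication reduces to a formula about the foundation, while purely additive $L$-formulas reduce to the theory of $R^+$ as a finitely generated abelian group, which is unaffected by replacing $\phi(R_0)$ with a larger free subgroup $S_0$ whose index is prime to every prime dividing $[M(R):N(R)]$.

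The main obstacle is clause (c). On the $(1)\Rightarrow(2)$ side, coprimality is not supplied by Theorem~\ref{elem-iso-alg:thm} and must be extracted from finer divisibility arithmetic inside $R$ and $S$: one must rule out any prime $p$ dividing both $[S_0:\phi(R_0)]$ and $[M(R):N(R)]$ by exhibiting an $L$-sentence expressing the existence of $p$-th roots of a distinguished element of $M(R)$ which would be satisfied in exactly one of $R$ or $S$. On the $(2)\Rightarrow(1)$ side, one must verify that the coprimality is indeed the precise condition guaranteeing that the Szmielew-style $p$-torsion and $p$-divisibility invariants of $R^+$ and $S^+$ coincide, so that the additive reduction in the previous paragraph is valid.
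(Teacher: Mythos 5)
There is a genuine gap in both directions; the more serious one is in $(2)\Rightarrow(1)$. Your claimed reduction of $Th(R)$ to ``multiplicative formulas about the foundation plus additive formulas about $R^+$'' is not a valid argument: when $M(R)\neq N(R)$ the ring does \emph{not} split as $R_F\times R_0$ (that splitting is equivalent to $M(R)=N(R)$ by Lemma~\ref{regular-M=N:lem}), so there is no Feferman--Vaught-style decomposition of arbitrary $L$-formulas available. Moreover, the appeal to Szmielew invariants of $R^+$ is vacuous here: under hypotheses (a)--(c) one already has $R^+\cong S^+$ as abelian groups ($R_0$ and $S_0$ are free of the same finite rank and $R/R_0\cong S/S_0$), so the coprimality condition cannot be ``about'' the additive theory. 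Its actual role, which your sketch never touches, is seen in the paper's proof of Theorem~\ref{converse}: one passes to ultrapowers over an $\omega_1$-incomplete ultrafilter and builds an isomorphism $R^*\cong S^*$ by gluing a map $\phi^*$ (induced by a sequence of twisted monomorphisms $\phi_j$) on a subring $R^*_f$ with a map $\psi$ on $R^*_0$ obtained from the structure theory of saturated abelian groups; $\psi$ multiplies the basis of $S_0^*$ by nonstandard scalars $\alpha_k=d_k+q_k^*e\in\Z^*$ that are divisible by no prime \emph{precisely because} $\gcd(d,e)=1$, and the two maps agree on $R^*_f\cap R^*_0=\langle e_iu_i\rangle$ exactly because $e_i\alpha_k$ recovers the factor $e$. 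Keisler--Shelah then gives $R\equiv S$. Without this (or an equivalent back-and-forth in a saturated model), the converse direction is unproved.

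In $(1)\Rightarrow(2)$ you correctly identify clause (c) as the obstacle but do not resolve it, and the proposed sentence about ``$p$-th roots of a distinguished element of $M(R)$'' is not the right invariant. The difficulty is that the $\Z$-action on $P(R)=N(R)/Is(R^2)$ is not interpretable in $R$, so the index of a subgroup of $P(R)$ is not directly expressible; the paper's device (Lemma~\ref{mainalgR:lem}(1c)) is that for the \emph{fixed} integer $e=[M(R):N(R)]$ the statement ``the images of $u_m,\dots,u_{n-1}$ form a basis of $P/eP$'' is first order, and transferring it to $S$ is exactly what forces $[S_0:\phi(R_0)]$ to be finite and prime to $e$ (Theorem~\ref{mainalgS:thm}(1c)). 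Two further points need care in your set-up: the foundation $R/R_0$ is not a definable quotient (an addition is a chosen complement, not a definable ideal), so Theorem~\ref{elem-iso-alg:thm} cannot be applied to it directly --- the paper instead pins down pseudo-bases adapted to the definable series $R\geq M(R)\geq N(R)\geq Is(R^2)\geq 0$ together with their periods and structure constants; and the scalar ring $A(R)$ acts only on $R/Ann(R)$ and $R^2$, not on all of $R$, so the two-sorted structure $\langle R,A_R\rangle$ you invoke is not the one that is actually interpretable.
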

  The direction $(1.)\Rightarrow (2.)$ is called the \emph{Characterization Theorem} and will be proved in Section~\ref{main:sec}. The direction $(2.)\Rightarrow (1.)$ called naturally the \emph{converse of the characterization theorem}, stated in somewhat different terms, is given by Theorem~\ref{converse}. 
  
  An FDZ-algebra is called \emph{regular} if for some addition (and therefore for any addition) $R_0$ there exists a subring $R_F$ of $R$ containing $R^2$ such that $R\cong R_F \times R_0$. In Lemma~\ref{regular-M=N:lem} we shall prove that $R$ is a regular FDZ-algebra if and only if $M(R)=N(R)$. So the following statement was actually embedded in Theorem~\ref{mainnice:thm}.
   
\begin{cor}\label{regular:cor} Let $R$ be a regular FDZ-algebra. Then for an FDZ-algebra $S$,
$$R\equiv S \Leftrightarrow R\cong S.$$
\end{cor}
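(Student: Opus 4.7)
The plan is to derive the corollary directly from Theorem~\ref{mainnice:thm} together with the characterization of regularity given by Lemma~\ref{regular-M=N:lem}.

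The implication $R \cong S \Rightarrow R \equiv S$ is immediate, since isomorphic structures satisfy the same first-order sentences in any language, and in particular in the pure ring language $L$.

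For the converse, I would assume $R \equiv S$ and invoke Theorem~\ref{mainnice:thm} to obtain one of its two alternatives. Since $R$ is regular, Lemma~\ref{regular-M=N:lem} (promised in the excerpt) gives $M(R) = N(R)$, so $[M(R):N(R)] = 1$. Now inspect the two alternatives in condition (2.) of Theorem~\ref{mainnice:thm}: the first alternative directly yields $R \cong S$, which is what we want; the second alternative explicitly requires, via its clause (c), that $[M(R):N(R)] \neq 1$, and is therefore ruled out in the regular case. Hence the first alternative must hold and $R \cong S$.

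There is essentially no obstacle here beyond correctly citing the two results. The only point worth being careful about is that the second alternative of Theorem~\ref{mainnice:thm} is genuinely incompatible with $M(R)=N(R)$, rather than merely overlapping with the first; this is guaranteed by the explicit assertion $[M(R):N(R)] \neq 1$ built into clause (c) of that theorem. Once this is noted, the corollary follows immediately without any further computation.
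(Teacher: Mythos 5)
Your proof is correct and follows exactly the route the paper intends: the paper itself remarks that the corollary is "embedded" in Theorem~\ref{mainnice:thm} via Lemma~\ref{regular-M=N:lem}, and your observation that clause (c) of the second alternative explicitly requires $[M(R):N(R)]\neq 1$ is precisely what rules that alternative out when $R$ is regular. Nothing further is needed.
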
   

Finally we call an FDZ-algebra $R$ \emph{tame} if $Ann(R)\leq Is(R^2)$. The following theorem is the generalization of Corollary~\ref{scalarrings:cor2} to the class $\la{T}$ of all tame FDZ-algebras.

\begin{thm}\label{tame:thm} Let $\la{T}$ be the class of all tame FDZ-algebras. Then any $R$ from $\la{T}$ is finitely axiomatizable inside the class of all FDZ-algebras.\end{thm}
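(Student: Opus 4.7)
The plan is to use tameness to reduce finite axiomatizability of $R$ inside the FDZ-algebras to characterization of $R$ up to isomorphism, and then to transfer the two-sorted sentence produced by Theorem~\ref{elem-iso-alg:thm} down to the pure ring language $L$ via an interpretation of the scalar sort that is available precisely because $R$ is tame.

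First I would verify that tameness forces $M(R)=N(R)$. Since $Ann(R)\leq Is(R^2)$ we have $R^2+Ann(R)\leq Is(R^2)$, so
$$M(R)=Is(R^2+Ann(R))\leq Is(Is(R^2))=Is(R^2),$$
and the reverse containment is immediate; similarly $N(R)=Is(R^2)+Ann(R)=Is(R^2)$. Consequently $[M(R):N(R)]=1$, clause (2)(c) of Theorem~\ref{mainnice:thm} cannot be realized, and for every FDZ-algebra $S$ one has $R\equiv S$ iff $R\cong S$. It therefore suffices to exhibit a single $L$-sentence $\sigma_R$ such that $S\models \sigma_R$ iff $S\cong R$, for $S$ any FDZ-algebra.

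To construct $\sigma_R$, I would apply Theorem~\ref{elem-iso-alg:thm} to the two-sorted algebra $\langle R,\Z\rangle\in\mathcal{A}$, obtaining an $L_3$-sentence $\phi_{R,\Z}$ whose models in $\mathcal{A}$ are exactly the two-sorted algebras isomorphic to $\langle R,\Z\rangle$. The remaining task is to furnish a parameter-free $L$-interpretation $\Gamma$ that, applied to any FDZ-algebra $S$, produces a two-sorted algebra $\Gamma(S)\in\mathcal{A}$ with $\Gamma(R)=\langle R,\Z\rangle$. Once this is in place, let $\sigma_R$ be the $L$-translate of $\phi_{R,\Z}$ along $\Gamma$, conjoined with the $L$-sentences asserting that $\Gamma$ actually defines such a structure. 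Then $S\models \sigma_R$ forces $\Gamma(S)\cong \langle R,\Z\rangle$, and forgetting the scalar sort yields $S\cong R$ as rings.

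The principal obstacle is constructing $\Gamma$, and this is precisely where tameness is essential. In the non-tame case the ideal $\Delta(R)=Is(R^2)\cap Ann(R)$ has a nontrivial direct complement $R_0$ in $Ann(R)$ that carries additive information invisible to the ring multiplication, and it is exactly these hidden directions that allow elementary equivalence to diverge from isomorphism in Theorem~\ref{mainnice:thm}. The tameness hypothesis $Ann(R)\leq Is(R^2)$ forces $R_0=0$, so every element of $R$ is tied by a nonzero integer multiple to $R^2$. Using this, one can define the $\Z$-scalar action on $S$ through first-order pairings between arbitrary elements and representatives modulo $Is(S^2)$, with torsion bounds inherited from the finite structure of $R$, yielding the required interpretation $\Gamma$. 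With $\Gamma$ in hand the verification that $\sigma_R$ axiomatizes $R$ inside the FDZ-algebras is routine.
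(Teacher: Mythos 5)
Your overall strategy --- use tameness to reduce to producing a single $L$-sentence pinning down the isomorphism type of $R$, obtain such a sentence from Theorem~\ref{elem-iso-alg:thm} applied to $\langle R,\Z\rangle$, and pull it back along an interpretation of the scalar sort --- is reasonable, and your computation that tameness gives $M(R)=N(R)=Is(R^2)$ is correct. The gap is that the entire content of the theorem is concentrated in the interpretation $\Gamma$, which you do not construct, and the reason you offer for its existence is false. Tameness says $Ann(R)\leq Is(R^2)$: this ties elements of the \emph{annihilator} to $R^2$ by nonzero integer multiples, not arbitrary elements of $R$. For instance, $R=\Z x\oplus\Z y$ with $x^2=y$ and all other products of basis elements zero is tame (here $Ann(R)=Is(R^2)=\Z y$), yet no nonzero multiple of $x$ lies in $Is(R^2)$. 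More importantly, the machinery actually available (Lemma~\ref{A(R)-inter:lem}, Corollary~\ref{zinter:cor}) interprets the $\Z$-action only on $R/Ann(R)$ and on $R^2$, never on $R$ itself: for $x\in R$ and $n$ in the interpreted copy of $\Z$, these data determine $nx$ only up to its coset modulo $Ann(R)$, and multiplying the candidates by elements of $R$ cannot separate them, since the ambiguity lives inside the annihilator. Resolving that ambiguity uniformly is precisely the difficulty of the theorem, and ``first-order pairings with representatives modulo $Is(S^2)$'' does not engage with it.

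Two further points. First, $Is(S^2)$ is not defined by a single formula uniformly over all FDZ-algebras $S$ --- it requires the infinite type $\{\Psi_n\}$ of~\eqref{TypeIso:eqn} --- so any $\Gamma$ built on it needs an extra finite device; the paper inserts a sentence forcing $q\cdot Ann(S)\subseteq S^2$ (with $q$ the order of $Is(R^2)/R^2$) and then works with the uniformly definable series $R\geq Ann(R)+R^2\geq R^2\geq 0$. Second, the paper never interprets $\langle R,\Z\rangle$ in $R$: it captures, by one sentence each, the isomorphism types of the two-sorted modules attached to the quotients of that series, and uses the vanishing of the addition (your $R_0=0$) to see that this data, together with the multiplicative structure constants of a definable pseudo-basis as in Lemma~\ref{mainalgR:lem}, determines $R$ up to isomorphism. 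To repair your argument you would either have to genuinely construct $\Gamma$ --- which amounts to solving the coset-ambiguity problem above --- or switch to this quotient-by-quotient route.
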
 

\subsection{Our approach} 
 Let us give an informal account of our methods in proving Theorem~\ref{mainnice:thm}. Recall that an arbitrary ring $R$ is an abelian group together with a bilinear map:
 $$f_R:R\times R \to R, \quad (x,y)\mapsto xy.$$
 The bilinear map $f$ induces a full non-degenerate map 
  $$f_{RF}: \frac{R}{Ann(R)}\times \frac{R}{Ann(R)} \to R^2.$$
 By Theorem~\ref{ringinter}, from Section~\ref{bilin} there exists a canonical scalar ring $P(f_R)$ of $f_{RF}$ and its actions on $R/Ann(R)$ and $R^2$ are interpretable in $f_R$. Moreover the largest subring $A(R)$ of $P(f_R)$ consisting of those $\alpha$ making the canonical homomorphism:
  $$\eta: R^2 \to \frac{R}{Ann(R)}$$ 
 $A(R)$-linear is a definable subring of $R$. So indeed the two-sorted algebras $\langle R/Ann(R), A(R)\rangle$ and $\langle R^2, A(R)\rangle$ are both interpretable in the pure ring $R$. Then the main theorem will follow from Theorem~\ref{elem-iso-alg:thm} and a few other technical results.  Bilinear maps and the relevant terminology will be discussed in Section~\ref{bilin}. Theorem~\ref{mainnice:thm} will be proved in Section~\ref{main:sec}. The converse Theorem~\ref{converse} of Theorem~\ref{mainnice:thm} will appear in Section~\ref{converse:sec}, thereby providing a complete algebraic characterization of elementary equivalence of FDZ-algebras.  

\subsection{Organization of the paper}  
We finish the introduction by describing the organization of the paper. In Section~\ref{pre:sec} we provide background and describe our notation. In particular we shall review logical notation and background, bilinear maps and their model theory and finally a little bit of algebras. In Section~\ref{Z-interpret:sec} we shall discuss FDZ-scalar rings (associative commutative and unitary), resulting in proofs of Theorem~\ref{elemmod:thm} and Corollary~\ref{scalarrings:cor}. In Section~\ref{main:sec} we obtain a necessary condition for elementary equivalence of arbitrary FDZ-algebras and provide a proof of the characterization direction of Theorem~\ref{mainnice:thm} as well as a proof of Theorem~\ref{tame:thm}. In Section~\ref{converse:sec} we prove Theorem~\ref{converse} which indeed proves the converse of the characterization theorem.

\section{Preliminaries}\label{pre:sec}
 
 \subsection{Bilinear maps} 
 Assume $M_1$, $M_2$ and $N$ are $A$-modules, where $A$ is a commutative associative ring with unit. The map 
 $$f:M_1\times
M_2\rightarrow N$$
is called $A$-bilinear if 
$$f(ax,y)=f(x,ay)=af(x,y)$$ 
for all $x\in M_1$, $y\in M_2$ and $a \in A$.
 An $A$-bilinear mapping $f:M_1\times M_2\rightarrow N$ is called \textit{non-degenerate in the first variable} if $f(x,y)=0$ for all $y$ in $M_2$ implies $x=0$. Non-degeneracy with respect to the second variable is defined similarly. The mapping $f$ is called \textit{non-degenerate} if it is non-degenerate with respect to first and second variables.
We call the bilinear map $f$, a \textit{full bilinear mapping} if $N$ is generated as an $A$-module by elements $f(x,y)$, $x\in M_1$ and $y\in M_2$.

\subsection{Preliminaries on logic}
For the most part we follow standard model theory texts such as~\cite{hodges} regarding notation and model theory. An arbitrary ring $R$ is a structure with signature $\langle +, \cdot, 0 \rangle$ and with the corresponding language is called $L$. A scalar ring $A$ is a structure with signature $\langle +, \cdot, 0,1 \rangle$ and the corresponding language is called $L_1$. 

\subsubsection{\protect Interpretations}\label{ss1}
\label{interpret1}Let $\mathfrak{B}$ and $\mathfrak{U}$ be  structures of signatures $\Delta$ and
$\Sigma$ respectively. We may assume that $\Sigma$ and $\Delta$ do not contain any function symbols replacing them if necessary with predicates (i.e. replacing operations with their graphs). The structure $\mathfrak{U}$ is said to be
\textit{interpretable}\index{interpretable} in $\mathfrak{B}$ with parameters $\bar{b}\in |\mathfrak{B}|^m$ or \textit{relatively
interpretable} in $\mathfrak{B}$ if there is a set of first-order formulas
$$\Psi=\{A(\bar{x},\bar{y}), E(\bar{x},\bar{y}_1,\bar{y}_2),\Psi_{\sigma}(\bar{x}, \bar{y}_1, \ldots ,
\bar{y}_{t_{\sigma}}): \sigma \textrm{  a predicate of signature  } \Sigma \}$$ of signature $\Delta$ such
that
 \begin{enumerate}
 \item $A(\bar{b})=\{\bar{a}\in|\mathfrak{B}|^n:\mathfrak{B}\models A(\bar{b},\bar{a})\}$ is not empty,
 \item $E(\bar{x},\ov{y}_1,\ov{y}_2)$ defines an equivalence relation $\epsilon_{\bar{b}}$ on $A(\bar{b})$,
 \item if the equivalence class of a tuple of elements $\bar{a}$ from $A(\bar{b})$ modulo the
 equivalence relation $\epsilon_{\bar{b}}$ is denoted by $[\bar{a}]$, for every $n$-ary predicate
 $\sigma$ of signature $\Sigma$, the predicate $P_{\sigma}$ is defined on
 $A(\bar{b})/\epsilon_{\bar{b}}$ by
 $$P_{\sigma}([\bar{b}],[\ov{a}_1], \ldots, [\ov{a}_n])\Leftrightarrow_{\text{def}}\mathfrak{B}\models \Psi_{\sigma}(\bar{b}, \ov{a}_1,
 \ldots, \ov{a}_n),$$
 \item There exists a map $f:A(\bar{b})\rightarrow |\fra{U}|$ such that the structures $\mathfrak{U}$ and
 $\Psi(\mathfrak{B},\bar{b})=\langle A(\bar{b})/\epsilon_{\bar{b}},P_{\sigma}:\sigma\in \Sigma \rangle$ are isomorphic via the map $\tilde{f}:A(\bar{b})/\epsilon_{\bar{b}}\rightarrow |\fra{U}|$ induced by $f$.
 \end{enumerate}
 Let $\Phi(x_1,\ldots, x_n)$ be a first-order formula of signature $\Delta$. If $\mathfrak{U}$ is interpretable in $\mathfrak{B}$ for any 
  parameters $\bar{b}$ such that $\mathfrak{B}\models \Phi(\bar{b})$ then  $\mathfrak{U}$
 is said to be \textit{regularly interpretable} in $\mathfrak{B}$ with the help of the formula $\Phi$. If the tuple $\bar{b}$ is empty, $\mathfrak{U}$ is said
 be \textit{absolutely interpretable} in $\mathfrak{B}$. 
 
Now let $T$ be a theory of signature $\Delta$. Suppose that $S: Mod (T) \rightarrow K$ is a functor defined on the class $Mod (T)$ of all models of the theory $T$ (a category with isomorphisms) into a certain category $K$ of structures of signature $\Sigma$. If there exists a system of first-order formulas $\Psi$ of signature $\Delta$, which absolutely interprets the system $S(\frak{B})$ in any model $\frak{B}$ of the theory $T$ we say that $S(\frak{B})$ is \textit{absolutely interpretable in $\frak{B}$ uniformly with respect to $T$}.
 
 For example, the annihilator $Ann(R)$ of a ring $R$ is interpretable (or in this case definable) in $R$ uniformly with respect to the theory of groups. On the other hand, the ideal $R^2$, generally speaking, is not interpretable in $G$ uniformly with respect to the theory of groups. However, it is so if $R^2$ is of \emph{finite width} i.e. there is an $s\in \N$ such that
 $$R^2=\left\{\sum_{i=1}^sx_iy_i:x_i,y_i\in R\right\}.$$ For example in an FDZ-algebra $R^2$ is absolutely definable in $R$ uniformly with respect to $Th(R)$. Note that the ideal $R^2$ will have width less than or equal to $s$ if $R$ satisfies the first-order sentence \begin{equation}\label{R^2Sen:eqn} \begin{split}\phi_w:\forall x &\left((\exists x_1, \ldots, x_{s+1},y_1,\ldots ,y_{s+1}~ x=\sum_{i=1}^{s+1}x_iy_i)\right. \\ 
 &\left.\to (\exists z_1, \ldots, z_{s},t_1,\ldots ,t_{s}~ x=\sum_{i=1}^{s}t_is_i)\right).\end{split}\end{equation}

 \subsubsection{$A$-Modules as two-sorted structures} Assume $A$ is a scalar ring and $M$ is an $A$-module. For us the $A$-module $M$ is a two-sorted structure $M_A=\langle M, A, s\rangle$ where $A$ is a ring, $M$ is an abelian group and $s=s(x,y,z)$, where $x$ and $z$ range over $M$ and $y$ ranges over $A$ is the predicate describing the action of $A$ on $M$, that is $\langle M,A,s\rangle \models s(m,a,n)$ if and only if $a\cdot m =n$. Sometimes we drop the predicate $s$ from our notation and write $M_A=\langle M, A\rangle$. When we say that \emph{the ring $A$ and its action on $M$ are interpretable in a structure $\mathfrak{U}$} we mean that the one-sorted structure naturally associated to $M_A$ is interpretable in $\mathfrak{U}$. 
 
Note that if a multi-sorted structure has signature without any function symbols then there is a natural way to associate a one-sorted structure to it. We always assume that our signatures do not contain any function symbols, since functions can be interpreted as relations. Therefore when we talk about interpretability of multi-sorted structures in each other or interpretability of a multi-sorted structure into a one-sorted one we mean the interpretability of the associated one-sorted structures.  

Recall that a \emph{homomorphism $\theta : \langle M,A, s\rangle \to \langle N,B, t\rangle$ of two-sorted modules} is a pair $(\theta_1, \theta_2)$ where $\theta_1: M\to N$ is a homomorphism of abelian groups and $\theta_2:A \to B$ is a homomorphism of rings satisfying 
$$ s(m_1,a,m_2)\Leftrightarrow t(\theta_1(m_1), \theta_2(a), \theta_1(m_2)), \quad \forall a\in A, \forall m_1,m_2\in M.$$  A homomorphism $\theta$ as above is said be \emph{an isomorphism of two-sorted modules} if $\theta_1$ and $\theta_2$ are isomorphisms of the corresponding structures.

 \subsection{Largest ring of a bilinear map} \label{bilin}
In this section all the modules are considered to be faithful and scalar rings are always commutative associative with a unit. An $A$-module $M$ is said to be \emph{faithful} if
 $am=0$ for $a\in A$ and all $m\in M$ implies $a=0$. 
Let $f:M_1\times M_2\rightarrow N$ be a non-degenerate full $A$-bilinear mapping for some ring $A$.

Let $M$ be an $A$-module and let $\mu:A\rightarrow P$ be an inclusion of rings. Then the $P$-module $M$ is an
\textit{$P$-enrichment} of the $A$-module $M$ with respect to $\mu$ if for every $a\in A$ and $m \in M$,
$am=\mu(a)m$. Let us denote the set of all $A$ endomorphisms of the $A$-module $M$ by $End_A(M)$. Suppose the
$A$-module $M$ admits a $P$-enrichment with respect to the inclusion of rings $\mu:A\rightarrow P$. Then every
$\alpha\in P$ induces an $A$-endomorphism, $\phi_{\alpha}:M\rightarrow M$ of modules defined by
$\phi_{\alpha}(m)=\alpha m$ for $m \in M$. This in turn induces an injection $\phi_P:P\rightarrow End_A(M)$ of
rings. Thus we associate a subring of the ring $End_A(M)$ to every ring $P$ with respect to which there is an
enrichment of the $A$-module $M$.

\begin{defn}Let $f:M_1\times M_2\rightarrow N$ be a full $A$-bilinear
mapping and $\mu:A\rightarrow P$ be an inclusion of rings. The mapping $f$ admits $P$-enrichment with
respect to $\mu$ if the $A$-modules $M_1$, $M_2$ and $N$ admit $P$ enrichments with respect to $\mu$ and $f$ remains
bilinear with respect to $P$. We denote such an enrichment by $E(f,P)$.\end{defn} 

We define an ordering $\leq$
on the set of enrichments of $f$ by allowing $E(f,P_1)\leq E(f,P_2)$ if and only if $f$ as an $P_1$ bilinear
mapping admits a $P_2$ enrichment with respect to inclusion of rings $P_1\rightarrow P_2$. The largest
enrichment $E_H(f,P(f))$ is defined in the obvious way. We shall prove existence of such an enrichment for a
large class of bilinear mappings. 

The following proposition taken from~\cite{alexei86} is essential for our work.
\begin{prop}[\cite{alexei86}, Theorem 1] \label{P(f)} If $f:M_1\times M_2 \rightarrow N$ is a non-degenerate full $A$-bilinear mapping over a commutative associative ring $A$ with unit, then $f$ admits the largest enrichment.\end{prop}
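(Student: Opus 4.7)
The plan is to exhibit $P(f)$ concretely as a ring of compatible endomorphism triples. Writing $M_3 = N$, define
$$P(f) = \bigl\{(\alpha_1,\alpha_2,\alpha_3) \in End_A(M_1) \times End_A(M_2) \times End_A(N) : f(\alpha_1 m_1, m_2) = \alpha_3 f(m_1, m_2) = f(m_1, \alpha_2 m_2)\bigr\}$$
with componentwise operations and unit $(1,1,1)$. The diagonal map $A \to P(f)$ sending $a$ to the triple of multiplications by $a$ is a ring homomorphism whose injectivity follows from faithfulness of the three modules, assumed throughout this section.

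The first real step is commutativity of $P(f)$. For $\alpha, \beta \in P(f)$, I would evaluate $f(\alpha_1 m_1, \beta_2 m_2)$ in two ways using the compatibility relations, once peeling off $\alpha$ first and once peeling off $\beta$ first, to obtain $\alpha_3 \beta_3 f(m_1, m_2) = \beta_3 \alpha_3 f(m_1, m_2)$ for all $m_1, m_2$. Fullness promotes this equality to $\alpha_3 \beta_3 = \beta_3 \alpha_3$ on all of $N$, because these two $A$-endomorphisms agree on a generating set. Armed with this, the identity $f((\alpha_1 \beta_1 - \beta_1 \alpha_1) m_1, m_2) = (\alpha_3 \beta_3 - \beta_3 \alpha_3) f(m_1, m_2) = 0$ together with non-degeneracy in the first variable yields $\alpha_1 \beta_1 = \beta_1 \alpha_1$, and the symmetric calculation handles the second component.

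Second, $P(f)$ is itself an enrichment: the three projections equip $M_1, M_2, N$ with compatible $P(f)$-module structures extending the $A$-action, and $f$ is $P(f)$-bilinear by the very definition of the compatibility triples. For maximality, let $E(f, P)$ be any other enrichment with inclusion $\mu : A \to P$. Each $\lambda \in P$ acts on $M_1, M_2, N$ by $A$-endomorphisms $\phi^1_\lambda, \phi^2_\lambda, \phi^3_\lambda$, and $P$-bilinearity of $f$ says precisely that $(\phi^1_\lambda, \phi^2_\lambda, \phi^3_\lambda) \in P(f)$. This gives a ring homomorphism $P \to P(f)$ over $A$, injective by faithfulness, so $E(f, P) \le E(f, P(f))$.

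The delicate step is commutativity, since it is the unique point where both fullness and non-degeneracy are combined; without either hypothesis the compatibility triples need not commute, and the construction collapses. Everything else is formal unwinding of the definitions.
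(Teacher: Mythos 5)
Your proof is correct, but it follows a genuinely different route from the source \cite{alexei86}. That proof works with $M_1=M_2=M$ and realizes $P(f)$ inside $End_A(M)$: one takes the symmetric endomorphisms (those with $f(\alpha x,y)=f(x,\alpha y)$), restricts to those commuting with every symmetric endomorphism so as to force commutativity, and then imposes, for every $n$, the condition that $\sum_{i\le n} f(x_i,y_i)=\sum_{i\le n} f(u_i,v_i)$ implies $\sum_{i\le n} f(\alpha x_i,y_i)=\sum_{i\le n} f(\alpha u_i,v_i)$; these conditions are exactly what makes the induced action $\alpha\cdot\sum_i f(x_i,y_i)=\sum_i f(\alpha x_i,y_i)$ on $N$ well defined. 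You instead carry the $N$-component as part of the data, so the well-definedness issue never arises, and you get commutativity from the two-sided evaluation of $f(\alpha_1 m_1,\beta_2 m_2)$ combined with fullness and non-degeneracy; your argument is also shorter and handles $M_1\ne M_2$ directly, without reducing to the diagonal case. What you lose is what the paper needs afterwards: the conditions cutting out the source's $P(f)$ are first-order in the structure $\mathfrak{U}(f)$, which is what underlies the interpretability statement of Theorem~\ref{ringinter}, whereas an abstract triple of endomorphisms is not a definable object of $\mathfrak{U}(f)$. Two small points you should make explicit: the compatible triples are closed under composition (immediate from the defining identities, and needed for $P(f)$ to be a subring), and each triple is determined by any single component (if $\alpha_1=0$ then $\alpha_3=0$ by fullness and then $\alpha_2=0$ by non-degeneracy), which yields faithfulness of all three $P(f)$-modules as the conventions of that section require.
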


\subsection{Largest ring of scalars as a logical invariant}\label{largest-ring:sec}
Indeed the ring $P(f)$ is interpretable in the bilinear map $f$ providing that $f$ satisfies certain conditions in addition to the ones in Proposition~\ref{P(f)}. 

The mapping $f$ is said to have \textit{finite width}\index{bilinear mapping!of finite width} if there is a natural number $s$ such that for every $u\in
N$ there are $x_i\in M_1$ and $y_i\in M_2$ such that
$$u=\sum_{i=1}^sf(x_i,y_i).$$
The least such number, $w(f)$\index{ $w(f)$}, is the \textit{width} of $f$.

A set $E_1=\{e_1,\ldots e_n\}$ is a \textit{left complete system} for a non-degenerate mapping $f$ if $f(E_1,y)=0$ implies $y=0$. The
cardinality of a minimal left complete system for $f$ is denoted by $c_1(f)$. A right complete system and the number $c_2(f)$ are defined correspondingly.

The \textit{type} of a bilinear mapping $f$, denoted by $\tau(f)$ \index{ $\tau(f)$}, is the triple $$(w(f),c_1(f), c_2(f)).$$ The mapping $f$ is
said to be of finite type if  $w(f)$, $c_1(f)$ and $c_2(f)$ all exist. If $f,g:M_1\times M_2 \rightarrow N$ are bilinear maps of finite type we say that the type of $g$ is less than the type of $f$ and write $\tau(g)\leq \tau(f)$ if $w(g)\leq w(f)$, $c_1(g)\leq c_1(f)$ and $c_2(g)\leq c_2(f)$.

Let $A$ be a scalar ring. Assume $M_1$, $M_2$ and $N$ are faithful $A$-modules. Let $f:M_1\times M_2\rightarrow N$ be a $A$-bilinear map. We associate two structures to $f$. The first one is
$$\mathfrak{U}(f)=\langle M_1,M_2,N, \delta\rangle.$$
where $M_1$, $M_2$ and $N$ are abelian groups and $\delta$ describes the bilinear map. The other one is  $$\mathfrak{U}_A(f)=\langle A,M_1,M_2,N,\delta,s_{M_1},s_{M_2}, s_N\rangle,$$ where $A$ is a scalar ring and $s_{M_1}$, $s_{M_2}$ and $s_N$ describe the actions of $A$ on the modules $M_1$, $M_2$
and $N$ respectively.

We state the following theorem without proof. Readers may refer to the cited reference for a proof. 

\begin{thm}[\cite{alexei86}, Theorem 2]\label{ringinter}Let $f:M_1\times M_2 \to N$ be a non-degenerate full bilinear mapping of finite type and let $P(f)$ be the largest ring of scalars of $f$. Then $\mathfrak{U}_{P(f)}(f)$
 is absolutely interpretable in $\mathfrak{U}(f)$. Moreover the same formulas interpret $\mathfrak{U}_{P(g)}(g)$ in $\mathfrak{U}(g)$ if $g$ is a full non-degenerate bilinear map with $\tau(g)\leq \tau(f)$.\end{thm}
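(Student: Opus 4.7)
The plan is to parametrize $P(f)$ by short tuples over $M_1$ and $M_2$, and then recover its scalar actions directly from the bilinear form. Fix a left complete system $\bar{e}=(e_1,\ldots,e_{c_1})\in M_1^{c_1}$ and a right complete system $\bar{e}'=(e'_1,\ldots,e'_{c_2})\in M_2^{c_2}$. Because $P(f)$ acts bilinearly, any $\alpha\in P(f)$ satisfies $f(e_i,\alpha y)=f(\alpha e_i,y)$ for every $y\in M_2$; since $\bar{e}$ is left complete, $\alpha y$ is the unique $y'\in M_2$ with $f(e_i,y')=f(\alpha e_i,y)$ for all $i=1,\ldots,c_1$. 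The action on $M_1$ is recovered symmetrically from $\alpha\bar{e}'$, and the action on $N$ from the width-$w$ representation $z=\sum_{k=1}^{w}f(x_k,y_k)$ via $\alpha z=\sum_k f(\alpha x_k,y_k)$. Hence $\alpha$ is uniquely encoded by the pair $(\alpha\bar{e},\alpha\bar{e}')\in M_1^{c_1}\times M_2^{c_2}$.

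Next I would cut out the set of tuples arising as such encodings by a single first-order formula in $\mathfrak{U}(f)$, asserting of a candidate $(\bar{e},\bar{e}',\bar{u},\bar{v})$: (i) $\bar{e}$ is left complete and $\bar{e}'$ right complete; (ii) for every $y\in M_2$ there is a necessarily unique $y'\in M_2$ with $f(e_i,y')=f(u_i,y)$ for each $i$, and dually for $M_1$, so the induced maps $\alpha\colon M_i\to M_i$ are total; (iii) symmetry $f(\alpha x,y)=f(x,\alpha y)$; (iv) the well-definedness conditions $S_n(\alpha)$ from the construction of $P(f)$, for all $n$ up to a finite bound determined by $w$, which is enough because any sum-of-$n$ representation of an element of $N$ can, by fullness and width $w$, be compared with a length-$w$ representation; and (v) commutation with every other tuple defining a symmetric endomorphism, a bounded universal quantifier over tuples of the same shape. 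Ring operations on $P(f)$ correspond to natural operations on the tuples (addition coordinatewise; multiplication by composing the recovery formulas), and the three action predicates $s_{M_1},s_{M_2},s_N$ are defined directly by the recovery recipes above, with $s_N$ using the width-$w$ representation.

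To obtain absolute interpretability, I would then existentially quantify over the complete systems $\bar{e},\bar{e}'$ throughout, producing a family of formulas $\Psi$ whose only parameters are the integers $(w,c_1,c_2)=\tau(f)$. For the uniformity clause, given $g$ with $\tau(g)\leq\tau(f)$, any minimal left or right complete system for $g$ can be padded to length $c_1(f)$ or $c_2(f)$ while remaining complete, and every element of the codomain of $g$ admits a sum-of-$w(f)$ representation; hence the same $\Psi$ interprets $\mathfrak{U}_{P(g)}(g)$ in $\mathfrak{U}(g)$.

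The main obstacle I expect is item (iv): replacing the a priori infinite intersection $P(f)=\bigcap_n Z_n$ by a single first-order condition. Finite width is exactly what delivers such a reduction, since the conditions $S_n$ should stabilize once $n$ exceeds a bound comparable to $w$; but executing the reduction precisely, and then verifying that the resulting action on $N$ is both bilinear with respect to $f$ and commutes with the whole centralizer, is the combinatorial heart of the argument. Once this step is in place, the definability of operations, of the action predicates, and of the equivalence relation identifying tuples that induce the same $\alpha$ all reduce to routine first-order manipulations that need only the integer data $\tau(f)$, which accounts for the uniformity.
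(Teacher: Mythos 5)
The paper states this theorem without proof, deferring to \cite{alexei86}, and your argument reconstructs essentially the proof given there: elements of $P(f)$ are coded by their values on a left and a right complete system, the two module actions are recovered first-order from the coding tuples via non-degeneracy, the action on $N$ via a length-$w$ representation, and the complete systems are existentially quantified away to get absoluteness and uniformity in $\tau(f)$. The one step you flag but leave informal — cutting the infinite family $S_n$ down to a single first-order condition — does go through with the bound $2w$: $S_w$ makes $z=\sum_{k\le w}f(x_k,y_k)\mapsto \sum_k f(\alpha x_k,y_k)$ well-defined on $N$ and gives $\alpha f(x,y)=f(\alpha x,y)$, $S_{2w}$ (comparing the concatenated length-$2w$ representation of $z_1+z_2$ with a length-$w$ one) gives additivity, and additivity together with $\alpha f(x,y)=f(\alpha x,y)$ then yields every $S_n$.
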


 \subsection{Some preliminary facts on algebras}
  
 Assume $R$ is an FDZ-algebra. Since $R^+$ is a finitely generated group then $R^+$ is generated by a finite ordered set of its elements say $u_1, \ldots , u_M$ such that $U_i/U_{i+1}$ is a cyclic group generated by $u_i+U_{i+1}$ where $U_{i}$ is the subgroup generated $u_{i}, \ldots u_M$ or in notation $U_i=\langle u_{i}, \ldots u_M\rangle$. The order $e_i$ of $u_i+U_{i+1}$ is called the \emph{period} of $u_i$. If $u_i$ has infinite order then we write $e_i=\infty$. We say that $\bar{u}=(u_1,\ldots, u_M)$ is a \emph{pseudo-basis} of period  $(e_1, \ldots , e_M)$. If $e_i<\infty$ there are fixed integers $t_{ik}$ such that 
 $e_iu_i=\sum_{i+1}^M t_{ik}u_k.$ These $t_{ik}$ are called \emph{torsion structure constants} associated to $\bar{u}$. We assume an arbitrary but fixed order on the $t_{ik}$. It's an easy corollary of the structure theorem for finitely generated groups that the number $M$, period $\bar{e}$ and the structure constants $t_{ik}$ uniquely determine $R^+$ up to isomorphism. 
 
 Now consider the ring structure of an FDZ-algebra $R$ and consider a pseudo-basis $\bar{u}$ as above. Then there are fixed integer constants $t_{ijk}$ such that
 $$u_iu_j=\sum_{k=1}^M t_{ijk}u_k.$$
 The numbers $t_{ijk}$ are called the \emph{multiplicative structure constants associated to $\bar{u}$}. Again we assume a fixed order on the set of all $t_{ijk}$ obtained as above. Now it is an elementary exercise to check that the number $M$, periods $\bar{e}$, the constants $t_{ik}$ and the $t_{ijk}$ fix the ring $R$ up to isomorphism of rings. 
 
\subsubsection{Largest ring of scalars $A(R)$} Let $R$ be an $A$-algebra where $A$ is a scalar ring. Here we only consider those algebras which are faithful with respect to the action of their rings of scalars. Let $\mu: A\to A_1$ be an inclusion of rings. We say that an $A$-algebra $R$ has an $A_1$-enrichment with respect to $\mu$ if $R$ is an $A_1$-algebra and $ \alpha r =\mu(\alpha)r$, $r\in R$, $\alpha \in A$.  
 
Denote by $A(R)$ the largest, in the sense defined just above, commutative subring of $End_A(R/Ann(R))$ that satisfies the following conditions:
\begin{enumerate}
\item $R/Ann(R)$ and $R^2$ are faithful $A(R)$-modules.
\item The full non-degenerate bilinear mapping $$f_F: R/Ann(R)\times R/Ann(R)\to R^2$$ induced by the product in $R$ is $A(R)$-bilinear.
\item The canonical homomorphism $\eta:R^2 \to R/Ann(R)$ is $A(R)$-linear.
\end{enumerate}

\begin{prop}[\cite{M90c}, Proposition 8]\label{A(R)exists:prop} For any algebra $R$ the ring $A(R)$ is definable, it is unique, and does not depend on the choice of the initial ring of scalars.\end{prop}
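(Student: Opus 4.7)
The plan is to build $A(R)$ as a distinguished subring of the largest ring of scalars of the canonical bilinear form $f_F : R/Ann(R)\times R/Ann(R)\to R^2$ induced by the ring multiplication. Since $Ann(R)$ is defined exactly so that the induced map is non-degenerate, and $R^2$ is by definition generated by products, $f_F$ is a full non-degenerate $A$-bilinear map. Proposition~\ref{P(f)} therefore yields the largest enrichment $P(f_F)$, which by construction is a commutative subring of $End_A(R/Ann(R))$ that acts faithfully on both $R/Ann(R)$ and $R^2$ and with respect to which $f_F$ is bilinear. This already takes care of conditions (1) and (2) in the definition of $A(R)$.

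Next I would carve out $A(R)$ inside $P(f_F)$ by imposing the linearity of $\eta$:
\[
A(R) := \{\,\alpha \in P(f_F) : \eta(\alpha \cdot z) = \alpha \cdot \eta(z)\ \text{for all}\ z\in R^2\,\}.
\]
A direct verification shows this is a (commutative, unital) subring: it is closed under sums because $\eta$ is additive, and closed under products since for $\alpha,\beta\in A(R)$ one has $\eta(\alpha\beta z)=\alpha\eta(\beta z)=\alpha\beta\eta(z)$. The image of the initial scalar ring $A$ lies in $A(R)$ because $A$ acts compatibly on $R/Ann(R)$ and $R^2$ through the ring product in $R$. To see that $A(R)$ is the \emph{largest} such subring, take any commutative $B \subseteq End_A(R/Ann(R))$ satisfying (1)--(3): by (1)--(2) and the universal property of $P(f_F)$ there is a canonical embedding $B\hookrightarrow P(f_F)$, and (3) then forces this embedding to land inside $A(R)$. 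Uniqueness and independence of the choice of initial scalar ring are immediate from this universal characterization: any other admissible ring of scalars $A'$ acting on $R$ embeds into $A(R)$, and reversing the roles produces the same subring.

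For definability I would argue in two steps. First, $f_F$ is interpretable in the pure ring $R$, because $Ann(R)$ is absolutely definable and $R^2$, at least under the finite-width condition expressed by the sentence $\phi_w$ from~\eqref{R^2Sen:eqn}, is definable as well; the bilinear map is then inherited from the ring multiplication. Second, Theorem~\ref{ringinter} gives an absolute interpretation of the enriched structure $\mathfrak{U}_{P(f_F)}(f_F)$, and hence of $P(f_F)$, in $\mathfrak{U}(f_F)$. The subring $A(R)$ is then cut out inside $P(f_F)$ by the first-order condition $\forall z\in R^2\;(\eta(\alpha z)=\alpha\eta(z))$, where $\eta$ is interpretable because it is induced by the definable maps $R^2\hookrightarrow R\twoheadrightarrow R/Ann(R)$.

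The main obstacle I expect is the finite-type hypothesis required by Theorem~\ref{ringinter}: one must ensure that $f_F$ has finite width and admits finite left/right complete systems. For the applications targeted in this paper $R$ is an FDZ-algebra, so $R^+$ is finitely generated, pseudo-bases supply finite complete systems, and finite width follows from $\phi_w$; in that setting the interpretation is genuinely uniform across the relevant class. The subtler point is checking that passing from $P(f_F)$ to the subring $A(R)$ preserves this uniformity, i.e.\ that the linearity-of-$\eta$ condition is first-order expressible uniformly in the parameters defining $f_F$ and $\eta$, and that $A(R)$ depends only on the isomorphism type of $R$ and not on the chosen pseudo-basis --- the latter being automatic from the universal characterization established above.
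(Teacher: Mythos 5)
Your proposal is correct and follows essentially the same route the paper takes: the paper quotes this proposition from \cite{M90c} without reproving it, but when it needs the result (Lemma~\ref{A(R)-inter:lem}) it carries out exactly your construction --- interpret $f_{RF}:R/Ann(R)\times R/Ann(R)\to R^2$ in $R$, invoke Theorem~\ref{ringinter} to interpret $P(f_{RF})$, and cut out $A(R)=\{\alpha\in P:(\alpha x)+Ann(R)=\alpha(x+Ann(R)),\ \forall x\in R^2\}$ as a definable unitary subring. Your caveat about the finite-type hypothesis is also the right one, and is discharged in the paper precisely as you suggest, by restricting to FDZ-algebras where $R^2$ has finite width and pseudo-bases give finite complete systems.
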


\section{Elementary equivalence of FDZ-scalar rings}\label{Z-interpret:sec}
In this section we describe by first-order formulas some algebraic
invariants of any scalar ring $A$ with finitely
generated additive group $A^+$. In particular we provide a proof of  Theorem~\ref{elemmod:thm}.
\subsection{Interpretability of decomposition of zero into the product
of prime ideals with fixed characteristic}\label{Prime-decom:sec}

Let $A$ be a scalar ring. Suppose that we have a decomposition of zero into the product of
finitely generated prime ideals:
$$0=\p_1\cdot \p_2 \cdots \p_m,\qquad (\mathfrak{P})$$
Let $Char(\p_i)=\lambda_i$ be the characteristic of the integral
domain $A/\p_i$ and $$Char(\mathfrak{P})=(\lambda_1,\ldots ,
\lambda_m).$$ The purpose of this subsection is to obtain a formula
interpreting the decomposition of type $(\mathfrak{P})$ in $A$ with the
fixed characteristic $Char(\mathfrak{P})$, where the interpretation is uniform with respect to $Th(A)$.

A sequence of lemmas will follow. We omit some proofs as they are obvious.

\begin{lem} \label{P1} Consider the formula
$$Id(x,\bar{y})=\exists z_1,\ldots, \exists z_n (x=y_1z_1+\ldots + y_nz_n).$$
For any tuple $\bar{a}=(a_1,\ldots, a_n)\in A^n$ the formula $Id(x,\bar{a})$
defines in $A$ the ideal $id(\bar{a})$, generated by the elements
$a_1,\ldots, a_n$.\end{lem}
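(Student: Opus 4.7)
The plan is to verify the two inclusions $\{x\in A : A\models Id(x,\bar a)\} \subseteq id(\bar a)$ and $id(\bar a)\subseteq \{x\in A : A\models Id(x,\bar a)\}$. Both rely only on the standing convention that a scalar ring is commutative, associative, and possesses a multiplicative identity, so that $id(\bar a)$ admits the explicit description $\{a_1z_1+\cdots+a_nz_n : z_i\in A\}$.

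The first inclusion is immediate: if $x = a_1z_1+\cdots+a_nz_n$ for some $z_i \in A$, then each term $a_iz_i$ lies in the principal ideal $(a_i)\subseteq id(\bar a)$, so the sum $x$ lies in $id(\bar a)$.

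For the converse I would show directly that the set $S := \{a_1z_1+\cdots+a_nz_n : z_i\in A\}$ is an ideal of $A$ containing each generator $a_i$. Closure under addition is coordinatewise, and for any $b\in A$ one has
$$b(a_1z_1+\cdots+a_nz_n) = a_1(bz_1)+\cdots+a_n(bz_n) \in S,$$
using commutativity of $A$. Since $A$ has a multiplicative identity, $a_i = a_i\cdot 1 + \sum_{j\neq i} a_j\cdot 0 \in S$ for each $i$. Hence $S$ is an ideal containing $a_1,\ldots,a_n$, and therefore contains the smallest such ideal $id(\bar a)$. There is no substantive obstacle here: the lemma simply records in first-order logic the standard description of a finitely generated ideal in a commutative unital ring, and the two ingredients actually used are commutativity (to make $S$ absorbent under multiplication) and the existence of $1$ (to place the generators themselves in $S$).
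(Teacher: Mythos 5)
Your proof is correct, and it is exactly the standard argument the paper has in mind: the lemma concerns a scalar ring $A$ (commutative, associative, unital by the paper's convention), and the paper explicitly omits the proof as obvious. Both inclusions are verified properly, and you correctly identify commutativity and the unit as the only ingredients needed.
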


\begin{lem}\label{P2} The formula
$$P(\bar{y})=\forall x_1,\forall x_2(Id(x_1x_2,\bar{y})\rightarrow
(Id(x_1,\bar{y})\vee Id(x_2,\bar{y})))$$
is true for the tuple $\bar{a}$ of elements of the ring $A$ if and
only if the ideal $id(\bar{a})$ is prime.\end{lem}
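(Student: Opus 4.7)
The plan is to unwind both sides of the biconditional using Lemma~\ref{P1}, since the formula $P(\bar{y})$ is essentially a syntactic rendering of the standard definition of primeness.

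First I would apply Lemma~\ref{P1} to rewrite, for a fixed tuple $\bar{a}\in A^n$, each atomic subformula of the form $Id(\cdot,\bar{a})$ appearing in $P(\bar{a})$ as the literal membership statement in the ideal $id(\bar{a})$. The assertion $A\models P(\bar{a})$ then becomes
\[
\forall x_1,x_2\in A\quad\bigl(x_1x_2\in id(\bar{a})\ \Longrightarrow\ x_1\in id(\bar{a})\ \text{or}\ x_2\in id(\bar{a})\bigr),
\]
which is precisely the defining property of $id(\bar{a})$ being a prime ideal (the fact that $id(\bar{a})$ is an ideal at all is already built into Lemma~\ref{P1}).

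From here both directions are immediate. For the ($\Leftarrow$) direction I would assume $id(\bar{a})$ is prime and take arbitrary $x_1,x_2\in A$ with $A\models Id(x_1x_2,\bar{a})$; Lemma~\ref{P1} converts this into $x_1x_2\in id(\bar{a})$, primeness gives $x_1\in id(\bar{a})$ or $x_2\in id(\bar{a})$, and a second application of Lemma~\ref{P1} translates this conclusion back to $Id(x_1,\bar{a})\vee Id(x_2,\bar{a})$. For ($\Rightarrow$) I would conversely start with $A\models P(\bar{a})$ and any $x_1x_2\in id(\bar{a})$, apply Lemma~\ref{P1} in the forward direction to obtain $A\models Id(x_1x_2,\bar{a})$, and then invoke $P(\bar{a})$ to get the required disjunction.

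I do not anticipate any real obstacle: the formula $P(\bar{y})$ was literally designed to mirror the definition of primeness, and Lemma~\ref{P1} carries the whole semantic burden of converting the first-order predicate $Id$ into ideal membership. The one mildly delicate point is a matter of convention, namely that the formula does not itself rule out $id(\bar{a})=A$, so "prime" here is to be understood in the loose sense that admits the unit ideal as a trivial case; if properness is required for the subsequent developments in Section~\ref{Prime-decom:sec}, it can be enforced by a separate clause expressing $1\notin id(\bar{a})$.
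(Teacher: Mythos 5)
Your proof is correct and is exactly the argument the paper has in mind (the paper explicitly omits this proof as obvious, and the obvious proof is the one you give: unwind $Id(\cdot,\bar{a})$ via Lemma~\ref{P1} into membership in $id(\bar{a})$ and observe that $P(\bar{a})$ then literally states the primeness condition). Your side remark about properness is also consistent with the paper, which enforces $id(\bar{a})\neq A$ separately via the clause $\exists z\,\neg Id(z,\bar{y}_i)$ in Lemma~\ref{P4}.
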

\begin{lem}\label{P7} There exists a formula $Id_i(x,\bar{y}_1,\ldots , \bar{y}_i)$,
such that for any tuples $\bar{a}_1$, \ldots , $\bar{a}_i$, $Id_i(x,\bar{a}_1,\ldots , \bar{a}_i)$ defines the ideal
$\p_1\cdots \p_i$ in $A$ where $\p_k=id(\bar{a}_k)$. \end{lem}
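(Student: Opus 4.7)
The plan is to write the formula $Id_i$ down explicitly by identifying the natural finite generating set of the product ideal $\p_1\cdots\p_i$ and then invoking the construction from Lemma~\ref{P1}. The key algebraic observation I would rely on is that, since $A$ is commutative with unit, if $\p_k=id(\bar{a}_k)$ with $\bar{a}_k=(a_{k,1},\ldots,a_{k,n_k})$, then
\[ \p_1\cdots\p_i \;=\; id\bigl(\{a_{1,j_1}a_{2,j_2}\cdots a_{i,j_i} \,:\, 1\le j_k\le n_k\}\bigr). \]
This reduces the definability of the product ideal to the definability of a single ideal with a fixed, explicit finite generating set, which is precisely the setting of Lemma~\ref{P1}.

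Writing $\bar{y}_k=(y_{k,1},\ldots,y_{k,n_k})$, the formula I would propose is
\[ Id_i(x,\bar{y}_1,\ldots,\bar{y}_i) \;\equiv\; \exists \{z_{j_1,\ldots,j_i}\}_{1\le j_k\le n_k}\; \Bigl(x=\sum_{j_1,\ldots,j_i} z_{j_1,\ldots,j_i}\cdot y_{1,j_1}y_{2,j_2}\cdots y_{i,j_i}\Bigr). \]
The number of existentially quantified scalars is $n_1n_2\cdots n_i$, a fixed finite number depending only on the arities of the tuples $\bar{y}_k$, so this is a legitimate first-order formula in the signature of rings.

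For correctness, one inclusion is immediate: any sum of the displayed form lies in $\p_1\cdots\p_i$ because each monomial $y_{1,j_1}\cdots y_{i,j_i}$ does. For the reverse inclusion, I would take a generic element, write it as $\sum_l b_{l,1}\cdots b_{l,i}$ with $b_{l,k}\in\p_k$, expand each $b_{l,k}=\sum_{j_k}c_{l,k,j_k}\,a_{k,j_k}$, multiply out, and collect by the multi-index $(j_1,\ldots,j_i)$ using commutativity of $A$; the accumulated coefficient of $a_{1,j_1}\cdots a_{i,j_i}$ then provides the witness $z_{j_1,\ldots,j_i}$.

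The only potential obstacle I can foresee is a width issue. Naively, one might attempt an induction on $i$ via a formula of the shape $\exists z,w\,(x=zw\wedge Id_{i-1}(z,\bar{y}_1,\ldots,\bar{y}_{i-1})\wedge Id(w,\bar{y}_i))$; but a single product $zw$ cannot witness every element of $\p_1\cdots\p_i$, and allowing a sum of such products requires a uniform bound on the number of terms, which is not obviously available (this is the same difficulty alluded to around equation~\eqref{R^2Sen:eqn} for $R^2$). The formula above sidesteps the width question entirely by passing directly to the explicit finite list of generators of the product ideal, which is valid precisely because $A$ is commutative and unital.
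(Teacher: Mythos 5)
Your proof is correct and follows essentially the same route as the paper: the paper likewise observes that $\p_1\cdots\p_i$ is generated by the finitely many products of one entry from each tuple and then invokes Lemma~\ref{P1}. Your explicit formula and the remark about why a naive induction would run into a width problem are just a fuller writing-out of that same argument.
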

Indeed the ideal $\p_1\cdots \p
_i$ is generated by all the products of the form $y_1\cdots y_i$
where $y_k$ is an element of the tuple $\bar{a}_k$ and the number of such products
is finite. So an application of Lemma~\ref{P1} will imply the above statement.
\begin{lem}\label{P3} The formula:
$$D(\bar{y_1},\ldots,
\bar{y_m})=\forall x( \bigwedge_{i=1}^mP(\bar{y}_i) \wedge Id_m(x,\bar{y}_1,\ldots , \bar{y}_m)\rightarrow
x=0)$$
is true for tuples $\bar{a}_1,\ldots, \bar{a}_m$ if and only if the
ideals $\p_i=Id(\bar{a}_i)$ satisfy the decomposition
$(\mathfrak{P})$.\end{lem}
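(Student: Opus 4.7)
The plan is to unpack the formula $D$ by directly chaining Lemmas~\ref{P2} and~\ref{P7}. Fix tuples $\bar{a}_1,\ldots, \bar{a}_m$ in $A$ and set $\p_i = id(\bar{a}_i)$. By Lemma~\ref{P2} the conjunct $\bigwedge_{i=1}^m P(\bar{a}_i)$ holds in $A$ precisely when every $\p_i$ is a prime ideal; by Lemma~\ref{P7} the subformula $Id_m(x,\bar{a}_1,\ldots ,\bar{a}_m)$ defines the product ideal $\p_1\cdots \p_m$. Hence the quantified statement $D(\bar{a}_1,\ldots,\bar{a}_m)$ asserts exactly that whenever every $\p_i$ is prime and $x\in \p_1\cdots \p_m$, then $x=0$.

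For the direction $(\mathfrak{P})\Rightarrow D$, if the tuples realize a decomposition of type $(\mathfrak{P})$, each $\p_i$ is already prime, so $P(\bar{a}_i)$ holds for every $i$, while $\p_1\cdots \p_m=0$ forces every $x$ satisfying $Id_m(x,\bar{a}_1,\ldots,\bar{a}_m)$ to be zero; thus the implication inside $\forall x$ is true and $A\models D(\bar{a}_1,\ldots,\bar{a}_m)$. Conversely, since the primality conjunct contains no occurrence of $x$, it may be pulled outside the universal quantifier; in the regime where each $id(\bar{a}_i)$ is prime (which is the only one used when interpreting prime decompositions uniformly in $Th(A)$), the truth of $D$ reduces by Lemma~\ref{P7} to $\forall x\,(x\in \p_1\cdots \p_m \to x=0)$, i.e.\ $\p_1\cdots \p_m=0$, which together with the primality of the $\p_i$ is exactly $(\mathfrak{P})$.

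The argument presents no real obstacle: it is a purely formal composition of Lemmas~\ref{P1}--\ref{P7}. The only mild point of care is the logical one just noted: because the primality conjunct sits inside the scope of $\forall x$, the formula $D$ is vacuously true on tuples for which some $id(\bar{a}_i)$ fails to be prime, so the ``iff'' in the statement is to be read as characterizing $(\mathfrak{P})$-decompositions among those tuples for which each $id(\bar{a}_i)$ is already prime. This is the only regime needed for the uniform interpretation of prime decompositions with fixed characteristic pursued in Section~\ref{Prime-decom:sec}.
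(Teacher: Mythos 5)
The paper omits the proof of Lemma~\ref{P3} as obvious, and your unpacking via Lemmas~\ref{P2} and~\ref{P7} is the intended argument; the forward direction is complete and correct. Your observation about the scope of the primality conjunct is also correct: as printed, $\bigwedge_{i=1}^m P(\bar{y}_i)$ sits inside the antecedent of the implication governed by $\forall x$, so $D(\bar{a}_1,\ldots,\bar{a}_m)$ is vacuously true whenever some $id(\bar{a}_i)$ fails to be prime, and the stated biconditional is then literally false in the ``only if'' direction.

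However, the repair you propose --- reading the equivalence as restricted to tuples whose ideals are already known to be prime --- is not compatible with how the lemma is used. In Lemma~\ref{P4} the formula $D$ appears as a conjunct of $D_\Lambda$, and in Lemma~\ref{decom} that formula is evaluated on tuples $\bar{b}_1,\ldots,\bar{b}_m$ of an elementarily equivalent ring $B$ about which nothing is known in advance; the entire point is that satisfaction of the formula must \emph{force} the ideals $id(\bar{b}_i)$ to be prime (item 1 of Lemma~\ref{decom}). Under your restricted reading the vacuously satisfied tuples are admitted and that conclusion fails, so it is not true that this is ``the only regime needed'' for the uniform interpretation. The correct resolution is to treat the placement of the conjunct as a typo and take $D$ to be $\bigwedge_{i=1}^m P(\bar{y}_i)\wedge \forall x\,(Id_m(x,\bar{y}_1,\ldots,\bar{y}_m)\to x=0)$; with that emendation the biconditional holds exactly as stated, your two-line argument proves it verbatim, and the downstream lemmas go through.
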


\begin{lem}\label{P4} The formula
$$D_{\Lambda}(\bar{y}_1,\ldots , \bar{y}_m)=D(\bar{y}_1, \ldots
\bar{y}_m) \wedge \bigwedge_{i=1}^m \forall x Id(\lambda_ix,\bar{y}_i)\wedge\bigwedge_{i=1}^m
\exists z \neg Id(z,\bar{y}_i)$$
where $\Lambda=(\lambda_1,\ldots \lambda_m)=Char(\mathfrak{P})$ is true
for tuples $\bar{a}_1,\ldots ,\bar{a}_m$ of elements of $A$ if and only
if all the following statements hold:
\begin{itemize}
\item the ideals $\p_i=id(\bar{a}_i)$ satisfy the decomposition
$(\mathfrak{P})$,
\item if $\lambda_i>0$ then $Char(A/\p_i)=\lambda_i$, 
\item the integral domains $A/\p_i$ are all non-zero.
\end{itemize}\end{lem}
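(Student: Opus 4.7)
The formula $D_\Lambda$ is a conjunction of three pieces, and I would prove the lemma by matching each piece to one of the three bullet conditions, in both directions. The work is essentially bookkeeping plus a single specialization of a universal quantifier.

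First, by Lemma~\ref{P3} the conjunct $D(\bar y_1,\ldots,\bar y_m)$ is equivalent to the statement that the ideals $\p_i=id(\bar a_i)$ are all prime and satisfy $\p_1\cdots\p_m=0$; this matches the first bullet. Second, the conjunct $\bigwedge_{i=1}^m \exists z\,\neg Id(z,\bar y_i)$ simply asserts that each $\p_i$ is a proper ideal of $A$, that is, $A/\p_i\neq 0$, which is exactly the third bullet.

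The real content is the middle conjunct $\bigwedge_{i=1}^m \forall x\,Id(\lambda_i x,\bar y_i)$, which I match to the second bullet. For the forward direction, specialize $x$ to the multiplicative identity $1_A$ (available since $A$ is a scalar ring); this gives $\lambda_i\cdot 1_A\in\p_i$, i.e.\ $\lambda_i$ vanishes in $A/\p_i$, so $\mathrm{char}(A/\p_i)$ divides $\lambda_i$. Here the fact that $\lambda_i$ is itself defined as the characteristic of the integral domain $A/\p_i$ is crucial: the characteristic of a nonzero integral domain is either $0$ or a prime, so if $\lambda_i>0$ then its only positive divisors are $1$ and $\lambda_i$, and the possibility $\mathrm{char}(A/\p_i)=1$ is ruled out by the third conjunct which makes $A/\p_i$ nontrivial. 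Hence $\mathrm{char}(A/\p_i)=\lambda_i$ when $\lambda_i>0$. Conversely, if $\mathrm{char}(A/\p_i)=\lambda_i>0$ then $\lambda_i\cdot 1_A\in\p_i$, and since $\p_i$ is an ideal we have $\lambda_i x=(\lambda_i\cdot 1_A)\cdot x\in\p_i$ for every $x\in A$; when $\lambda_i=0$ the formula reduces to $\forall x\,Id(0,\bar y_i)$, which is vacuous.

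Assembling the three correspondences in both directions yields the claimed equivalence. The only delicate step is the middle conjunct: one must remember that the $\lambda_i$ are constrained to be $0$ or prime by virtue of being characteristics of integral domains, since without this constraint the formula $\forall x\,Id(\lambda_i x,\bar y_i)$ would only express that $\mathrm{char}(A/\p_i)$ divides $\lambda_i$ rather than equals it. Modulo that observation the argument is essentially a term-by-term unpacking of the formula in light of Lemmas~\ref{P1}--\ref{P3}.
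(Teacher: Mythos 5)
Your proof is correct: the paper omits the proof of this lemma as obvious, and your term-by-term unpacking (Lemma~\ref{P3} for the first bullet, properness for the third, and the specialization $x=1_A$ plus the fact that $\lambda_i$ is $0$ or prime for the second) is exactly the routine argument intended. The one point worth flagging explicitly — that $\forall x\,Id(\lambda_i x,\bar y_i)$ alone only forces $\mathrm{char}(A/\p_i)\mid\lambda_i$, and equality needs $\lambda_i$ to be prime — is precisely the point you identified, so nothing is missing.
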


Denote by $0(\mathfrak{P})$ the number of zeros in the tuple
$(\lambda_1,\ldots , \lambda_m)$.
\begin{lem}\label{decom}Let $\mathfrak{P}=(\p_1,\ldots, \p_m)$ be a collection of finitely generated prime ideals of the scalar ring $A$, satisfying the
decomposition $(\mathfrak{P})$ and possessing the least number
$0(\mathfrak{P})$ among all such decompositions. Then for any scalar ring $B$
such that $A\equiv B$ in $L_1$, then the formula $D_{\Lambda}(\bar{y}_1,\ldots ,
\bar{y}_m)$ is true in $B$ on tuples $\bar{b}_1$, \ldots, $\bar{b}_m$,
if and only if:
\begin{enumerate}

\item $Id(x,\bar{b}_i)$ defines the prime ideal $\q_i=id(\bar{b}_i)$,
\item $0=\q_1\cdot \q_2\cdots \q_m$,
\item $Char(B/\q_i)=\lambda_i$, \quad $i=1, \ldots ,m$.
\end{enumerate}\end{lem}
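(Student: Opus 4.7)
The plan is to reduce the bulk of the statement to Lemma~\ref{P4} applied in $B$ and to isolate the one genuinely nontrivial point, namely forcing $Char(B/\q_i)=0$ at those indices $i$ with $\lambda_i=0$. Lemma~\ref{P4} is really a statement about the semantics of the formula schema $D_\Lambda$ and therefore holds verbatim in any scalar ring; applied in $B$, it immediately yields (1), (2), the nontriviality of each $B/\q_i$, and condition (3) at every index $i$ with $\lambda_i>0$ (since $B/\q_i$ is a domain, its characteristic divides $\lambda_i$, is $\neq 1$, and $\lambda_i$ is itself prime). This handles everything except (3) at indices $i$ with $\lambda_i=0$.

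The hard step is that remaining case, and this is where the minimality of $0(\mathfrak{P})$ must be combined with $A\equiv B$. I would argue by contradiction: suppose $\lambda_i=0$ but $\mu_i\define Char(B/\q_i)>0$ for some $i$. For every $j$ set $\mu_j\define Char(B/\q_j)$, so that $\mu_j=\lambda_j$ whenever $\lambda_j>0$ (by what was just established), and put $\Lambda'=(\mu_1,\ldots,\mu_m)$; by construction $\Lambda'$ has strictly fewer zero entries than $\mathfrak{P}$. Each conjunct of $D_{\Lambda'}(\bar{b}_1,\ldots,\bar{b}_m)$ is then verified inside $B$: the conjuncts coming from $D$ (primality and the decomposition of zero) were already secured, the conjunct $\forall x\, Id(\mu_j x,\bar{y}_j)$ holds because $\mu_j\cdot 1_B\in\q_j$ by definition of characteristic, and $\exists z\,\neg Id(z,\bar{y}_j)$ holds because $B/\q_j\neq 0$. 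Writing $n_j\define|\bar{b}_j|$, the first-order sentence
\[
\sigma \;=\; \exists \bar{y}_1\cdots\exists \bar{y}_m\; D_{\Lambda'}(\bar{y}_1,\ldots,\bar{y}_m),
\]
with the tuple lengths fixed to $n_1,\ldots,n_m$, therefore holds in $B$, and so also in $A$ by $A\equiv B$. Any witness in $A$ exhibits a decomposition of zero into finitely generated prime ideals $\p_1',\ldots,\p_m'$ whose characteristic tuple has at most as many zeros as $\Lambda'$ (by Lemma~\ref{P4} applied inside $A$ to $\Lambda'$), hence strictly fewer than $\mathfrak{P}$, contradicting the minimality of $0(\mathfrak{P})$.

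The converse direction of the biconditional is a routine unfolding of the conjuncts of $D_\Lambda$ using the three stated conditions: primality and the decomposition clause come from (1) and (2); the conjunct $\forall x\, Id(\lambda_i x,\bar{y}_i)$ follows from (3), since $Char(B/\q_i)=\lambda_i$ puts $\lambda_i\cdot 1_B$ into $\q_i$ (and the conjunct is vacuous when $\lambda_i=0$); and $\exists z\,\neg Id(z,\bar{y}_i)$ holds because $B/\q_i$ is nonzero, having a well-defined characteristic. The main obstacle throughout is really the single transfer of minimality across $A\equiv B$ sketched in the middle paragraph; everything else reduces to bookkeeping with the preparatory Lemmas~\ref{P1}--\ref{P4}.
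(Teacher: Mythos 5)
Your proof is correct and follows essentially the same route as the paper: settle (1), (2), and the positive-characteristic cases directly from the semantics of $D_\Lambda$, then transfer the sentence $\exists\bar{y}\,D_{\Lambda'}$ (with $\Lambda'$ the actual characteristic tuple in $B$) back to $A$ via $A\equiv B$ to contradict the minimality of $0(\mathfrak{P})$. Your write-up is if anything slightly more complete, since you also spell out the routine converse direction of the biconditional, which the paper leaves implicit.
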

\begin{proof} Items 1 and 2 follow from Lemma~\ref{P3}. If $\lambda_i>0$ then
Char$(B/q_i)=\lambda_i$ according to Lemma~\ref{P4}. Consequently,
$0(\mathfrak{P})\geq 0(\mathfrak{Q})$ where $\mathfrak{Q}=(\q_1,\ldots
, \q_m)$. If $0(\mathfrak{P})>0(\mathfrak{Q})$ then starting from
$\mathfrak{Q}$ we construct the formula $D_{\mu}$,
$\mu=char(\mathfrak{Q})$. From $A\equiv B$ and Lemma~\ref{P4} we obtain that
there exists a tuple $\mathfrak{P}'=(\p'_1,\ldots, \p'_m)$ such that
$0(\mathfrak{P}')\leq 0(\mathfrak{Q})<0(\mathfrak{P})$ which contradicts the choice of $\mathfrak{P}$. Consequently
$0(\mathfrak{P})=0(\mathfrak{Q})$ and hence
$Char(\mathfrak{P})=Char(\mathfrak{Q})$. The proposition is
proved.

\end{proof}
\begin{rem}\label{decom:rem} Any Noetherian commutative associative ring with a unit
possesses a decomposition of zero $0=\p_1\ldots \p_m$, satisfying the
assumptions of Proposition~\ref{decom}.\end{rem}

\begin{prop} \label{primary:thm} For any Noetherian associative commutative ring $A$ with a
unit, there exists an interpretable decomposition of zero into a product of prime ideals, where the interpretation is
uniform with respect to $Th(A)$.\end{prop}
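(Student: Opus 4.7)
The plan is to bolt the preceding lemmas together. Since $A$ is Noetherian, Remark~\ref{decom:rem} furnishes a decomposition $0=\p_1\cdots \p_m$ into finitely generated prime ideals. Among all such decompositions of $A$ I would pick one that \emph{minimizes} the number $0(\mathfrak{P})$ of zero entries in the characteristic tuple $Char(\mathfrak{P})=(\lambda_1,\ldots,\lambda_m)=\Lambda$. This minimality is exactly the hypothesis needed to invoke Lemma~\ref{decom}, and it is what guarantees that the data $(m,\Lambda)$ is a genuine invariant of $Th(A)$ rather than of the particular decomposition chosen.

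Next, consider the sentence
$$\sigma_A \;\equiv\; \exists \bar y_1\cdots\exists \bar y_m\; D_\Lambda(\bar y_1,\ldots,\bar y_m),$$
where $D_\Lambda$ is the formula of Lemma~\ref{P4} built from this $\Lambda$. By construction $A\models \sigma_A$, hence $\sigma_A\in Th(A)$. For any $B\models Th(A)$ we therefore obtain witnesses $\bar b_1,\ldots,\bar b_m$ in $B$, and Lemma~\ref{decom} applied to these witnesses yields: each $Id(x,\bar b_i)$ defines a prime ideal $\q_i=id(\bar b_i)$ of $B$; the product equals zero, $0=\q_1\cdots \q_m$; and $Char(B/\q_i)=\lambda_i$ for every $i$. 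The pair of formula schemas $\bigl\{D_\Lambda(\bar y_1,\ldots,\bar y_m),\; Id(x,\bar y_i)\text{ for }i=1,\ldots,m\bigr\}$, with parameters the chosen $\bar b_i$, thus absolutely interprets the decomposition $(\q_1,\ldots,\q_m)$ in $B$, uniformly across $B\models Th(A)$.

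The only real subtlety is the minimality step: without it, a model $B\equiv A$ could a priori witness $D_\Lambda$ by a tuple defining a \emph{shorter} decomposition, or one whose integral domains have different positive characteristics, in which case the number $m$ and the tuple $\Lambda$ would fail to be invariants of $Th(A)$ and the uniform interpretation would collapse. Lemma~\ref{decom} is precisely designed to rule this out once $0(\mathfrak{P})$ is minimal in $A$: any witness in $B$ must produce a decomposition with the same $0(\mathfrak{P})$ (and hence the same $\Lambda$), since otherwise one could run the symmetric argument starting from $B$ to contradict the minimality in $A$. Everything else is bookkeeping with Lemmas~\ref{P1}--\ref{P4}.
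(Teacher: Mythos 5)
Your proposal is correct and follows exactly the paper's route: the paper's proof is literally the one-line observation that the statement is a direct corollary of Lemma~\ref{decom} together with Remark~\ref{decom:rem}, and your writeup simply spells out the details (the existential sentence $\sigma_A$ guaranteeing witnesses in any $B\models Th(A)$, and the role of minimality of $0(\mathfrak{P})$) that the paper leaves implicit.
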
 
\begin{proof} The proposition is a direct corollary of Proposition~\ref{decom} and Remark \ref{decom:rem}.\end{proof}

\subsection{The case of FDZ-scalar rings}\label{elem-eq-rings:sec}
Now let $A$ be an FDZ-scalar ring. We shall denote by $r(A)$ the minimal number of generators of $A^+$ as an abelian group, say, the number of cyclic factors in the invariant decomposition of $A^+$. In case that $M$ is a finitely generated $A$-module where $A$ is as above the minimal number of generators of $M$ as an abelian group is denoted by $r(M)$, while the minimal number of generators for $M$ as an $A$-module is denoted by $r_A(M)$.

\begin{lem}\label{P9} There exists a sentence $ch_{\lambda}$ of $L_1$ such
that for any integral domain $A$ with finitely generated additive group
$A^+$:
$$char(A)=\lambda \Leftrightarrow A\models ch_{\lambda}.$$\end{lem}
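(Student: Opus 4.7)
The plan is to split on whether $\lambda$ is a positive prime or zero (any other value of $\lambda$ can never be the characteristic of an integral domain, so for those $ch_\lambda$ may be taken to be any contradiction such as $1=0$). For $\lambda = p$ prime, I will simply take
$$ch_p \;:=\; \underbrace{1+1+\cdots+1}_{p\text{ times}} = 0,$$
and the equivalence $char(A)=p \Leftrightarrow A\models ch_p$ is immediate in any integral domain: the characteristic of a domain is either $0$ or prime, and $p\cdot 1 = 0$ forces the characteristic to divide $p$.

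The interesting case is $\lambda=0$. The idea is to leverage the following structural dichotomy for FDZ integral domains, which I will establish first:
\begin{enumerate}
\item An FDZ integral domain of positive characteristic $p$ is necessarily a finite field. (The relation $p\cdot 1 = 0$ turns $A^+$ into a finitely generated $\mathbb{F}_p$-vector space, so $A$ is finite; and any finite integral domain is a field.)
\item An FDZ integral domain of characteristic $0$ is never a field. (A field of characteristic $0$ contains $\mathbb{Q}$ as a subring, but $\mathbb{Q}^+$ is not finitely generated as an abelian group, contradicting FDZ.)
\end{enumerate}
Granted this dichotomy, within the class of FDZ integral domains the conditions ``$char(A)=0$'' and ``$A$ is not a field'' are equivalent. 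So I will take
$$ch_0 \;:=\; \exists x\,\bigl(x\neq 0 \;\wedge\; \forall y\;(xy\neq 1)\bigr),$$
the $L_1$-sentence asserting the existence of a nonzero non-unit. Verification of both directions then reduces to the dichotomy above.

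The only nontrivial step is item (2) of the dichotomy, but it is genuinely easy: it boils down to the elementary fact that the additive group of $\mathbb{Q}$ is not finitely generated. There is no real obstacle here --- the lemma is structural rather than technical, and the main content is spotting that the absence of inverses is the right first-order surrogate for infiniteness (hence for characteristic $0$) within the FDZ class.
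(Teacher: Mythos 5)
Your proof is correct. The prime case is the same as the paper's (the paper writes $\forall x(\lambda x=0)$, which in a unital ring is equivalent to your $\lambda\cdot 1=0$). For $\lambda=0$ you take a slightly different sentence: the paper's $ch_0$ says ``$2\neq 0$ and $1/2\notin A$,'' justified by noting that $1/2\in A$ would force $A\supseteq\mathbb{Z}[1/2]$, whose additive group is not finitely generated, and conversely that $2$ is invertible in any domain of odd positive characteristic; you instead assert the existence of a nonzero non-unit, justified by the dichotomy that an FDZ domain of positive characteristic is a finite field while an FDZ domain of characteristic $0$ cannot be a field (it would contain $\mathbb{Q}$, and subgroups of finitely generated abelian groups are finitely generated). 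Both arguments exploit the same phenomenon --- inverting elements in a characteristic-zero domain destroys finite generation of the additive group --- but yours needs the extra (standard) fact that a finite integral domain is a field, whereas the paper's needs only the concrete computation with $\mathbb{Z}[1/2]$ and $\mathbb{Z}/p\mathbb{Z}$. Either sentence works, and the two proofs are of comparable length and difficulty.
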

\begin{proof} To prove the claim notice that if $\lambda$ is a prime then we can set
$ch_{\lambda}= \forall x (\lambda x=0)$. For $\lambda =0$ it is
enough to note that for the integral domain $A$, $char(A)=0$ if and
only if $2\neq 0$ and $1/2\notin A$. In fact if $char(A)=0$ then $2\neq
0$ and if $1/2\in A$ then $A\geq \mathbb{Z}[1/2]$ but
$\mathbb{Z}[1/2]$ is not finitely generated. Contradicting with the
assumption that $A^+$ is finitely generated. Conversely if $char(A)=p\neq 2$, then
$pA=0$. So $A$ contains the finite field $\Z/p\Z$ and so
$1/2\in A$.

\end{proof}
\begin{lem}\label{P10} Let $A$ be an FDZ-scalar ring with $r(A)=n$. Then there exists a sentence $\varphi_{n}$ of $L_1$ such that $ A
\models \varphi_{n}$ and for FDZ-scalar ring $B$,
$$B \models \varphi_n \Leftrightarrow r(B)\leq n.$$\end{lem}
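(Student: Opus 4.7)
My plan is to express $r(B) \leq n$ first-order via the classical identity $r(G) = \sup_p \dim_{\mathbb{F}_p}(G/pG)$ valid for any finitely generated abelian group $G$, with the supremum attained at every prime dividing the smallest invariant factor of the torsion subgroup of $G$. Thus $r(B) \leq n$ is equivalent to $\dim_{\mathbb{F}_p}(B/pB) \leq n$ for every rational prime $p$. For a variable $\pi$, I would define the formula
$$\delta_n(\pi) := \forall b_0, \ldots, b_n\, \exists c_0, \ldots, c_n \in Z_B, \exists y \in B\, \Big(\text{not all } c_i \in \pi B \,\land\, \sum_i c_i b_i = \pi\cdot y\Big),$$
where $Z_B$ denotes the prime subring of $B$. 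When $\pi = p\cdot 1_B$ for a rational prime $p$, $\delta_n(\pi)$ asserts exactly $\dim_{\mathbb{F}_p}(B/pB) \leq n$; note that replacing $Z_B$ by $B$ in the quantifier over $c_i$ would yield a trivially satisfied condition (witness $B = \mathbb{Z}^2$ with $n=1$), so restricting the coefficients is essential. The desired sentence is then
$$\varphi_n := \forall \pi\, \Big((\pi \in Z_B \,\land\, \pi \text{ is prime in } Z_B) \Rightarrow \delta_n(\pi)\Big),$$
a single first-order sentence uniformly capturing the infinite family of dimension bounds.

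The main obstacle is the first-order definability of the prime subring $Z_B$ across all FDZ scalar rings. In positive characteristic, $Z_B$ is the finite set $\{k\cdot 1_B : 0 \leq k < N\}$ (where $N$ is the characteristic) and is readily definable once $N$ is detected via Lemma~\ref{P9}. In characteristic zero, however, $Z_B \cong \mathbb{Z}$ is infinite and its first-order definability in a general FDZ scalar ring requires Julia Robinson-style techniques. Overcoming this is the crux of the proof: first apply Proposition~\ref{primary:thm} to existentially interpret a prime decomposition $0 = \q_1 \cdots \q_m$ of $B$; then, in each quotient $B/\q_i$ (which is an FDZ integral domain, hence either a finite field or an order in a number field), define the image of $\mathbb{Z}$ using Julia Robinson's definability theorem; finally, lift these definitions back through the surjections $B \twoheadrightarrow B/\q_i$ to obtain a uniform definition of $Z_B$ in $B$.

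Once $Z_B$ is defined, verifying the lemma is straightforward: $A \models \varphi_n$ follows immediately from $r(A) = n$ and the $r = \sup \dim$ identity, while the converse $B \models \varphi_n \Rightarrow r(B) \leq n$ is a direct application of the same identity combined with the observation that every rational prime $p$ is represented by a prime element $p\cdot 1_B$ of $Z_B$. Note that the universal quantification over $\pi$ in $\varphi_n$ handles rings like $\mathbb{Z}/(p_1 \cdots p_k)$ where arbitrarily many distinct primes appear in the characteristic tuple while $r(B) = 1$: the unbounded set of relevant primes is absorbed by the universal quantifier, so the length $m$ of the prime decomposition from Proposition~\ref{primary:thm} need not be bounded in $n$. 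The entire construction depends only on $n$, as required.
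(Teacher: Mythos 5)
Your reduction of $r(B)\leq n$ to the bounds $\dim_{\mathbb{F}_p}(B/pB)\leq n$ is sound as abelian-group theory, and you correctly identify that the whole scheme stands or falls with the first-order definability of the prime subring $\Z\cdot 1_B$ (your own example $\Z^2$ shows why the coefficients $c_i$ cannot range over all of $B$). But the construction you propose for $Z_B$ does not produce $\Z\cdot 1_B$. Pulling the Julia Robinson definition of $\Z$ back through the surjections $B\twoheadrightarrow B/\q_i$ and intersecting yields $\bigcap_i(\Z\cdot 1+\q_i)$ --- precisely the ring $A_{\mathfrak{P}}$ of Lemma~\ref{P6} --- which contains the ideal $\bigcap_i\q_i$ and is in general far larger than $\Z\cdot 1_B$. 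Concretely, for $B=\Z\times\Z$ with $\q_1=0\times\Z$ and $\q_2=\Z\times 0$ one has $\Z\cdot 1+\q_i=B$ for both $i$, so your $Z_B$ is all of $B$ and, by your own observation, $\delta_1(2)$ becomes trivially true while $r(B)=2$; the same collapse occurs for $B=\Z[\epsilon]/(\epsilon^2)$, where $\Z\cdot 1+(\epsilon)=B$. So the formula $\varphi_n$ as designed is simply false on these rings, and the crux step of the proof is missing, not merely unelaborated. There are also two secondary issues: Julia Robinson's formula depends on the degree of the number field, so to apply it in $B/\q_i$ you must first bound $r(B/\q_i)$ --- i.e.\ you need the integral-domain case of the very lemma you are proving before you may invoke it (and you cannot cite Lemma~\ref{julia:lem}, which sits downstream of Lemma~\ref{P10} via Corollaries~\ref{10b} and~\ref{10a}); and $\delta_n(\pi)$ as written is false when $\pi B=B$, where it should be vacuously true.

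For contrast, the paper never defines $\Z\cdot 1_B$ at this stage and only ever needs the larger, genuinely definable rings $\Z\cdot 1+\p$. It first treats integral domains directly: in positive characteristic $A$ is finite and one counts elements; in characteristic zero $A^+$ is torsion-free, so the single prime $p=2$ suffices and $r(A)=n$ is read off from $|A/2A|=2^n$ --- no Julia Robinson needed. For general $A$ it filters through the definable ideals $O_i=\p_1\cdots\p_i$ and bounds $r(\bar O_i)$ by $r(A/\p_{i+1})\cdot r_{A/\p_{i+1}}(\bar O_i)$, the first factor handled by the integral-domain case and the second being a first-order statement about module generation over the definable quotient $A/\p_{i+1}$. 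If you want to salvage your route, you would need an honest uniform definition of $\Z\cdot 1_B$ in arbitrary FDZ-scalar rings (which is a substantially harder problem than definability in orders of number fields, and which the paper deliberately avoids), or else retreat to the paper's strategy of working prime-by-prime in the quotients.
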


\begin{proof} 
Let us first assume that $A$ is an integral domain. By  Lemma~\ref{P9} there is a sentence $ch_{\lambda}$ that defines the characteristic $char(A)=\lambda$ of $A$ in the language of rings and hence $char(B)=\lambda$ if $B\models ch_{\lambda}$. If $char(A)=\lambda\neq
0$ then $A$ is finite and $\varphi_n$ will say that $ch_\lambda$ and $A$ does not
have more than $\lambda^n$ elements. If $char(A)=0$ then
$r(A)=n$ if and only if $|A^+/2A^+|=2^n$. So in this case $\varphi_n$ will say that $ch_0$ and there are precisely $2^n$ distinct elements in $A$ modulo $2A$. 

Now assume $A$ is not necessarily an integral domain. Then $A$ is Noetherian and by Remark~\ref{decom:rem} it admits a decomposition of zero $$0=\p_1\cdot \p_2 \cdots \p_m,\qquad (\mathfrak{P})$$ with  $\Lambda=char(\mathfrak{P})$ where the prime ideals $\p_i$ are finitely generated. Set $O_i=\p_0\cdots\p_i$, where $\p_0=A$. Set also $\bar{O}_i=O_i/O_{i+1}$. Note that $r(A)$ is bounded by 
$$\sum_{i=0}^{m-1}r(\bar{O}_i).$$
So it is enough to come up with sentences $\varphi_i$ each expressing a bound for $r(\bar{O}_i)$. By lemma~\ref{P4} there are tuples of elements $\bar{a}_i$ ,$i=1, \ldots, m$ satisfying $D_{\Lambda}(\bar{a}_1, \ldots \bar{a}_m)$. Moreover if $B$ is any ring similar to $A$ with tuples of elements $\bar{b}_1, \ldots, \bar{b}_m$ which satisfy $D_\Lambda(b_1,\ldots ,b_m)$ then by Lemma~\ref{P5}, $B$ has a decomposition of zero $\mathfrak{Q}$ with same exact properties of $\mathfrak{P}$. Moreover the formula $id_i(x,\bar{a}_1, \ldots \bar{a}_i)$ from Lemma~\ref{P1} defines $O_i$ in $A$ and $id_i(x,\bar{b}_1, \ldots \bar{b}_i)$ defines similar term in $B$. The quotients  $\bar{O}_i$ are finitely generated $A$-modules over the integral domains $A/\p_i$. Assume $r(A/\p_{i+1})=n_i$ and $r_{A/\p_{i+1}}(\bar{O})=s_i$. Note that $r(\bar{O}_i)\leq n_is_i$. So it is enough to define $n_i$ and $s_i$ in the language of rings. By definability of the $\p_i$ and the $O_i$ it is easy to write a sentence in the language of rings saying that $r_{A/\p_{i+1}}(\bar{O}_i)\leq s_i$. By the first paragraph of this proof and definability of $\p_{i+1}$ there is also a sentence in the language of rings saying that $r(A/\p_{i+1})\leq n_i$. Note that the same formulas work for a ring $B$ as above. 
\end{proof}
\begin{cor} \label{10b} Assume $\mathcal{K}$ is the class of all FDZ-scalar rings. Assume $\mathcal{I}_n$ is the subclass of $\mathcal{K}$ consisting of all integral domains $A$ of characteristic zero with $r(A)\leq n$, for some natural number $n>0$. Then there exists a sentence $\Phi_n$ of the language of rings such that for any $A\in \mathcal{K}$
$$A\models \Phi_n \Leftrightarrow A\in \mathcal{I}_n.$$\end{cor}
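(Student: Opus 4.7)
The plan is to define $\Phi_n$ as the conjunction of three first-order sentences expressing (i) being an integral domain, (ii) having characteristic zero, and (iii) having additive rank at most $n$, and then to verify each conjunct works uniformly inside the class $\mathcal{K}$ of FDZ-scalar rings. The three conjuncts combine two ingredients already at hand: the characteristic sentence $ch_0$ from Lemma~\ref{P9}, and the key equivalence used in the proof of Lemma~\ref{P10}, namely that a finitely generated torsion-free abelian group $G$ has rank exactly equal to $\log_2 |G/2G|$.

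First, integrality is expressed directly by
\[
\mathrm{ID} := \exists x\,(x\neq 0)\;\wedge\;\forall x\,\forall y\,(xy=0\to x=0\vee y=0).
\]
Second, for the characteristic zero condition on FDZ integral domains, Lemma~\ref{P9} furnishes a sentence $ch_0$ of $L_1$ (asserting $2\neq 0$ and $\forall y\,(2y\neq 1)$) such that, for any FDZ integral domain $A$, $A\models ch_0$ iff $\mathrm{char}(A)=0$. Third, rank at most $n$ will be captured by the sentence
\[
R_n := \forall x_0\cdots\forall x_n\;\bigvee_{\emptyset\neq S\subseteq\{0,\ldots,n\}}\exists y\Bigl(\sum_{i\in S}x_i = 2y\Bigr),
\]
which asserts that the $\mathbb{F}_2$-vector space $A/2A$ has dimension $\leq n$, i.e.\ $|A/2A|\leq 2^n$. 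Set
\[
\Phi_n := \mathrm{ID}\;\wedge\;ch_0\;\wedge\;R_n.
\]

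For the forward direction, suppose $A\in\mathcal{K}$ and $A\models\Phi_n$. From $\mathrm{ID}$, $A$ is an integral domain; combined with $ch_0$ and the fact that $A^+$ is finitely generated, Lemma~\ref{P9} forces $\mathrm{char}(A)=0$. Consequently $A^+$ is torsion-free and finitely generated, hence free abelian of some rank $r=r(A)$, giving $|A/2A|=2^r$. Now $R_n$ yields $2^r\leq 2^n$, so $r\leq n$ and $A\in\mathcal{I}_n$. For the converse, any $A\in\mathcal{I}_n$ is an integral domain (so satisfies $\mathrm{ID}$), has characteristic zero (so satisfies $ch_0$ by Lemma~\ref{P9}), and has $A^+\cong\mathbb{Z}^{r(A)}$ with $r(A)\leq n$, giving $|A/2A|=2^{r(A)}\leq 2^n$ and hence $A\models R_n$.

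There is essentially no obstacle once the pieces of Lemmas~\ref{P9} and~\ref{P10} are in place; the only subtlety is choosing the rank condition in the form $|A/2A|\leq 2^n$ rather than the equality $|A/2A|=2^n$ used in the proof of Lemma~\ref{P10}, which is what converts the equality $r(A)=n$ into the inequality defining $\mathcal{I}_n$.
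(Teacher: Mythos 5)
Your proof is correct and takes essentially the same approach as the paper, which also defines $\Phi_n$ as the conjunction of the integral-domain axiom, the sentence $ch_0$ from Lemma~\ref{P9}, and a rank bound coming from Lemma~\ref{P10}. The only difference is cosmetic: you write out the rank-bound conjunct explicitly via $|A/2A|\leq 2^n$ rather than citing $\varphi_n$ as a black box.
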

\begin{proof} A ring $A$ being an integral domain is axiomatizable by one ring theory sentence. The formula $ch_0$ from Lemma~\ref{P9} is true in any $A\in \mathcal{I}_n$ and conversely implies that $A\in \mathcal{K}$ has characteristic 0 once $A$ satisfies it. The formula $\varphi_n$ from Lemma~\ref{P10} is satisfied by any $A\in \mathcal{I}_n$ and conversely will force $r(A)\leq n$ for any $A\in \mathcal{K}$ satisfying it. The conjunction of these sentences is the desired one.\end{proof}
\begin{cor} \label{10a} Let $A\in \mathcal{I}_n$. Then, there exists a formula $\phi_{\Z}$ of the language of rings such that 
$$A \models \phi_\Z \Leftrightarrow A\cong \Z.$$ 
\end{cor}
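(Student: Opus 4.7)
The plan is to take $\phi_\Z := \Phi_1$, where $\Phi_1$ is the sentence provided by Corollary~\ref{10b} applied with $n=1$.

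First I would verify that for any FDZ-scalar ring $A$ which is an integral domain of characteristic $0$, having $r(A) = 1$ is equivalent to $A \cong \Z$. Since $\mathrm{char}(A) = 0$, the canonical map $\Z \to A$ is injective, and $A$ embeds into its field of fractions, so $A^+$ is torsion-free. A finitely generated torsion-free abelian group of rank $1$ is isomorphic to $\Z$, hence $A^+ \cong \Z$; since $1 \in A$, this forces $A = \Z \cdot 1 \cong \Z$ as rings.

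Next, observe that $\Phi_1$ from Corollary~\ref{10b} characterizes, within the class $\mathcal{K}$ of FDZ-scalar rings, the subclass $\mathcal{I}_1$, namely the FDZ integral domains of characteristic $0$ with $r(A) \leq 1$. Any scalar ring has $1 \neq 0$, so $r(A) \geq 1$ always; hence $A \in \mathcal{I}_1$ forces $r(A) = 1$, and by the previous paragraph this gives $A \cong \Z$. Conversely $\Z$ plainly lies in $\mathcal{I}_1$. Therefore, for any $A \in \mathcal{I}_n$ (noting $\mathcal{I}_1 \subseteq \mathcal{I}_n$), we have $A \models \Phi_1$ iff $A \in \mathcal{I}_1$ iff $A \cong \Z$.

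There is no substantive obstacle: the statement is a direct specialization of Corollary~\ref{10b} at $n=1$, combined with the elementary algebraic observation that $\Z$ is, up to isomorphism, the unique FDZ integral domain of characteristic $0$ whose additive rank equals $1$.
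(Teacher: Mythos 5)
Your proposal is correct and follows essentially the same route as the paper: set $\phi_\Z=\Phi_1$ from Corollary~\ref{10b} and observe that $\mathcal{I}_1$ contains only $\Z$ up to isomorphism. You merely spell out in more detail the algebraic fact (characteristic $0$ gives torsion-freeness, so a rank-one finitely generated additive group with $1\in A$ forces $A=\Z\cdot 1$) that the paper states without proof.
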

\begin{proof} By Corollary~\ref{10b} the formula $\Phi_1$ characterizes members of $\mathcal{I}_1$ among those of $\mathcal{K}$. But $\mathcal{I}_1$ has only one member up to isomorphism, namely $\Z$. So we may set $\phi_\Z=\Phi_1$. \end{proof}

\begin{lem}\label{julia:lem} Consider the class $\mathcal{I}_n$ introduced in Corollary~\ref{10b}. Then there exists a formula $R_n(x)$ defining
the subring $\mathbb{Z}\cdot 1_A$ in any member $A$ of $\mathcal{I}_n$. \end{lem}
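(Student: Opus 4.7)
The label \texttt{julia:lem} signals that the intended strategy is to invoke Julia Robinson's classical theorem on the first-order definability of $\mathbb{Z}$ inside rings of algebraic integers, in a form uniform across number fields of bounded degree.

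The first step is to reduce the lemma to the setting of orders in number fields. For any $A \in \mathcal{I}_n$, the fraction field $K = A \otimes_{\mathbb{Z}} \mathbb{Q}$ is a number field of degree $d = r(A) \leq n$. Since $A^+$ is finitely generated, every element of $A$ is integral over $\mathbb{Z}$, so $A$ sits as an order inside $\mathcal{O}_K$. Because $\mathbb{Z}$ is integrally closed in $\mathbb{Q}$, one has $A \cap (\mathbb{Q} \cdot 1_A) = \mathbb{Z} \cdot 1_A$, so defining the subring $\mathbb{Z} \cdot 1_A$ inside $A$ amounts to picking out those elements of $A$ that also lie in the prime field of $K$.

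The central ingredient is then the uniform Robinson formula: for each positive integer $n$ there is a single formula $R_n(x)$ in the pure ring language (with unit) such that, for every order $A$ in a number field of degree at most $n$, the sentence $R_n(\alpha)$ holds in $A$ if and only if $\alpha \in \mathbb{Z} \cdot 1_A$. Robinson's original argument encodes integrality at each finite rational prime through the solvability of certain ternary quadratic Diophantine equations and uses Hasse--Minkowski to collapse these local conditions into a single first-order ring sentence; uniformity in the degree stems from the fact that the quadratic forms employed depend only on the bound $n$, not on the specific field. The same formula continues to work inside an arbitrary order rather than the full ring of integers, because Robinson's predicate picks out exactly those elements of $K$ that are rational and integral over $\mathbb{Z}$, and such elements form $\mathbb{Z} \cdot 1_A$ whether one evaluates in $A$ or in $\mathcal{O}_K$.

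The main obstacle is the construction (or citation) of the uniform Robinson formula itself; the reduction described above is routine given the structure theorem for finitely generated abelian groups together with the integrality of $A$ over $\mathbb{Z}$. Once $R_n$ is in hand the lemma follows immediately, and one can in fact strengthen the conclusion slightly by noting that $R_n$ may be combined with the characteristic-zero sentence $ch_0$ from Lemma~\ref{P9} and the rank bound $\varphi_n$ from Lemma~\ref{P10} to obtain a formula that behaves correctly in any FDZ-scalar ring, not merely those in $\mathcal{I}_n$.
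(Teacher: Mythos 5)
There is a genuine gap at the central step. Robinson's theorem (the one cited in the paper, from page 956 of her 1959 article) defines $\mathbb{Z}$ by a formula whose quantifiers range over the \emph{field} $K$ of algebraic numbers (and her companion result for rings of integers uses quantifiers ranging over the full maximal order $\mathcal{O}_K$). Your proposal asserts that "the same formula continues to work inside an arbitrary order," i.e.\ that one may evaluate the Robinson predicate directly in $A$ with quantifiers restricted to $A$. That does not follow from the fact that $A\cap(\mathbb{Q}\cdot 1_A)=\mathbb{Z}\cdot 1_A$: the \emph{set} you want to define is the same, but the \emph{truth value} of Robinson's formula at a given element depends on which structure supplies the witnesses for its existential quantifiers. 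Her argument produces those witnesses (solutions of the ternary quadratic equations governed by Hasse--Minkowski) in $K$ or in $\mathcal{O}_K$, and there is no reason they should lie in a proper order $A\subsetneq\mathcal{O}_K$. So the existence of a formula working uniformly in all orders of bounded degree, quantifying only over the order, is precisely the point that needs proof, and your justification conflates the defined set with the defining formula.

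The paper closes exactly this gap by a different (and necessary) move: it first interprets the fraction field $F$ of $A$ uniformly in $A$, as pairs $(x,y)$ with $y\neq 0$ modulo the relation $(x,y)\sim(z,w)\Leftrightarrow xw=yz$. Quantifiers over $F$ thereby become quantifiers over pairs of elements of $A$, so Robinson's field formula $\Phi_{\mathbb{Z}(F)}$ (which is uniform over all $K$ with $[K:\mathbb{Q}]\leq n$, as an inspection of her proof shows) can be evaluated inside the interpreted copy of $F$. One then pulls the resulting definable copy of $\mathbb{Z}$ back to $A$ along the definable embedding $\mu(a)=[(a,1)]$, using that $\mathbb{Z}\cdot 1$ lies in $\mu(A)$. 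Your reduction to orders and the observation $A\cap(\mathbb{Q}\cdot 1_A)=\mathbb{Z}\cdot 1_A$ are fine, but to repair the argument you should route the definition through the interpreted fraction field rather than claiming the formula transfers verbatim to $A$.
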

\begin{proof} We need to note that the field of fractions $F$ of $A$ is an extension of field of rationals $\Q$ with dimension $n$ over $\Q$. So $F$ is a field of algebraic numbers of finite degree over $\Q$. Now by Theorem on page 956 of \cite{julia} the ring of integers $\Z$ is definable in $F$ by a formula $\Phi_{\Z(F)}(x)$. An inspection of the proof shows that the formula defines $\Z$ in any algebraic extension $K$ of $\Q$ with $[K:\Q]\leq [F:\Q]=r(A)=n$ (See the formula on line 15 of page 952 as well as the one in lines 20-21 of page 956 in~\cite{julia}). Moreover $F$ is uniformly interpretable in $A$. Though elementary, let us elaborate on this claim here a bit. Recall that $F$ is realized as $X/\sim$ where
$$X=\{(x,y):x\in A, y\in A\setminus \{0\}\},$$
and $\sim$ is the equivalence relation on $X$ defined by 
$$(x,y)\sim (z,w) \Leftrightarrow xw=yz.$$
Addition and multiplication are defined on $X/\sim$ in the obvious manner using addition and multiplication on $A$. The same formulas interpret the field of fractions $K$ of any integral domain of characteristic zero $B$ in $B$. So combining the results here we have an interpretation of $\Z$ in $A$. 

But the above interpretation of $\Z$ in $A$ also provides a formula defining $\Z$ (as a subset of $A$) in $A$ in the following way. Note that there is an interpretable monomorphism $\mu: A \to F$ defined by $\mu (a)=[(a,1)]$ where $|F|=X/\sim$ is considered as the set of equivalence classes $[(x,y)]$ described above. Now the copy of $\Z$
sitting in $F$ is included in the image of $\mu$ so the copy of $\Z$ in $A$ is a definable subset of $A$ as $\mu^{-1}(\Phi_{\Z(F)}(\mu(A))$. Since by Corollary~\ref{10a} $\Z$ is axiomatizable in $\mathcal{I}_n$ by one formula,  there exists a formula defining $\Z$ in any member of $\mathcal{I}_n$.   
\end{proof} 

\begin{lem}\label{P5} There exists a formula $R_{n,\Lambda}(x,\bar{y})$ such that for any scalar
ring $A$ with unit and $r(A)\leq n$ and for any prime ideal
$\p=id(\bar{a})$ of $A$ if $char(A/\p)=\lambda$ then the formula
$R_{n,\Lambda}(x,\bar{a})$ defines the subring
$$\mathbb{Z}\cdot 1+\p=\{z\cdot 1+x:z\in \mathbb{Z}, x\in \p\},$$
in $A$. \end{lem}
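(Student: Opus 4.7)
The plan is to split on $\lambda$ and realise $\Z\cdot 1 + \p$ as the preimage in $A$ of the subring $\Z\cdot 1_{A/\p}$ under the canonical projection $A \to A/\p$. In both cases the formula $Id(\cdot,\bar{y})$ of Lemma~\ref{P1} will be used to express membership in $\p$, and hence congruence modulo $\p$.

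If $\lambda$ is a positive prime, then $\Z\cdot 1_{A/\p}=\{k\cdot 1_{A/\p}:0\leq k<\lambda\}$, so $\Z\cdot 1+\p=\bigcup_{k=0}^{\lambda-1}(k\cdot 1+\p)$. The formula
\[
R_{n,\lambda}(x,\bar{y})\;:=\;\bigvee_{k=0}^{\lambda-1}Id(x-k\cdot 1,\bar{y})
\]
does the job; the hypothesis $r(A)\leq n$ is not even needed here.

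If $\lambda=0$, then $A/\p$ is an integral domain whose additive group is a quotient of $A^+$, so $r(A/\p)\leq r(A)\leq n$, and by hypothesis its characteristic is zero; hence $A/\p\in\mathcal{I}_n$ in the notation of Corollary~\ref{10b}. Lemma~\ref{julia:lem} therefore supplies a formula $R_n(z)$ of $L_1$ defining the subring $\Z\cdot 1_{A/\p}$ inside $A/\p$. Since $A/\p$ is uniformly interpretable in $A$ from the parameter $\bar{y}$ via the equivalence relation $u\sim v\Leftrightarrow Id(u-v,\bar{y})$ with the ring operations inherited unchanged from $A$, one obtains $R_{n,0}(x,\bar{y})$ by mechanically replacing each atomic equation $s=t$ appearing in $R_n$ with $Id(s-t,\bar{y})$ and leaving the quantifiers and Boolean connectives alone. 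Then $A\models R_{n,0}(x,\bar{y})$ iff $x+\p\in\Z\cdot 1_{A/\p}$, iff $x\in\Z\cdot 1+\p$.

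The argument is essentially routine once Lemma~\ref{julia:lem} is available: that lemma is precisely what is needed to handle the characteristic-zero case, where the finite disjunction available for $\lambda>0$ no longer suffices. The only point requiring care, but not difficulty, is the mechanical translation of $R_n$ through the canonical quotient interpretation.
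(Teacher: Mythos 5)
Your proof is correct and follows essentially the same route as the paper's: reduce to defining $\Z\cdot 1$ in the interpretable quotient $A/\p$, handle positive characteristic by finiteness (your explicit disjunction), and invoke Lemma~\ref{julia:lem} for characteristic zero. No substantive difference.
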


\begin{proof} Indeed the ideal $\p=id(\bar{a})$ is defined in $A$ by the formula
$Id(x,\bar{a})$. Consequently the ring $A/\p$ and the canonical
epimorphism $A \rightarrow A/\p$ are interpretable in $A$. Therefore
to obtain $R_{n,\Lambda}(x,\bar{y})$ it is sufficient to define the
subring $\mathbb{Z}\cdot 1$ in $A/\p$. In the case of $Char (A/\p)=0$ we use the formula $R_n(x)$ from Lemma~\ref{julia:lem}. As for the case of $char(A/\p)>0$ the set $\mathbb{Z}\cdot 1 +\p$ is finite
in $A/\p$ and hence definable in $A/\p$. 

\end{proof}

\begin{lem}\label{P6} Assume $A$ is a scalar ring and $r(A)\leq n$, admitting a decomposition
$$0=\p_1\ldots \p_m, \quad (\mathfrak{P})$$
into prime ideals $\p_i=id(\bar{a}_i)$ with $char(A/\p_i)=\lambda_i$, $i=1, \ldots ,
n$. Let $\Lambda=(\lambda_1, \ldots , \lambda_m)$. Then there exists a first-order formula $R_{n,\Lambda}(x, \bar{y}_1, \ldots ,
\bar{y}_m)$ of $L_1$ such that
 the formula $R_{n,\Lambda}(x,\bar{a}_1, \ldots, \bar{a}_m)$
defines in $A$ the subring
$$A_{\mathfrak{P}}=\bigcap_{i=1}^m(\mathbb{Z}\cdot 1 + \p_i),$$
\end{lem}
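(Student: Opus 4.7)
The plan is essentially trivial: assemble $R_{n,\Lambda}$ as a finite conjunction of the single-prime formulas provided by Lemma~\ref{P5}. More precisely, for each $i \in \{1, \ldots, m\}$ Lemma~\ref{P5} produces a first-order formula, call it $R_{n,\lambda_i}(x, \bar{y}_i)$, which, when interpreted in $A$ with the parameters $\bar{a}_i$, defines the subring $\Z \cdot 1 + \p_i$ (using that $char(A/\p_i) = \lambda_i$ and $r(A) \leq n$). One then sets
$$R_{n,\Lambda}(x, \bar{y}_1, \ldots, \bar{y}_m) \define \bigwedge_{i=1}^m R_{n,\lambda_i}(x, \bar{y}_i).$$

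Verification reduces to a single line: an element $x \in A$ satisfies $R_{n,\Lambda}(x, \bar{a}_1, \ldots, \bar{a}_m)$ in $A$ if and only if $x \in \Z \cdot 1 + \p_i$ for every $i$, which is exactly membership in $A_{\mathfrak{P}} = \bigcap_{i=1}^m (\Z \cdot 1 + \p_i)$. That $A_{\mathfrak{P}}$ is in fact a subring comes for free: each summand $\Z \cdot 1 + \p_i$ is a subring of $A$, being the preimage under the quotient $A \to A/\p_i$ of the prime subring $\Z \cdot 1_{A/\p_i}$, and the intersection of subrings is a subring.

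There is no real obstacle: the substantive content was already isolated in Lemma~\ref{P5} (uniform definability of $\Z \cdot 1 + \p$ from defining parameters of a single prime $\p$ in rings with $r(A) \leq n$), and Lemma~\ref{P6} merely packages finitely many such definitions together. The only mild point to note is that, while the data $\Lambda = (\lambda_1, \ldots, \lambda_m)$ is used to choose which instance of the Lemma~\ref{P5} formula to apply at each coordinate, $\Lambda$ itself is a fixed tuple of natural numbers (not a parameter of the structure), so the conjunction remains a single first-order formula of $L_1$ in the free variables $x, \bar{y}_1, \ldots, \bar{y}_m$, as required.
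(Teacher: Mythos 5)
Your proof is correct and coincides with the paper's own argument: the paper likewise defines $R_{n,\Lambda}$ as the conjunction $\bigwedge_{i=1}^m R_{n,\lambda_i}(x,\bar{y}_i)$ of the single-prime formulas from Lemma~\ref{P5}. Your additional remarks (that each $\mathbb{Z}\cdot 1+\p_i$ is a subring and that $\Lambda$ is fixed data rather than a parameter) are accurate but not needed beyond what the paper states.
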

\begin{proof} For each $1\leq i \leq m$ consider the formula $R_{n,\lambda_i}(x,\bar{y}_i) $ introduced in Lemma~\ref{P5}. So we can set
$$R_{n,\Lambda}(x,\bar{a}_1, \ldots, \bar{a}_m)=\bigwedge_{i=1}^mR_{n,\lambda_i}(x,\bar{y}_i).$$
\end{proof} 
To a decomposition of $0$ in $A$ as above we associate the
series of ideals $$A> \p_1>\p_1\p_2>\ldots> \p_1\cdots \p_m =0$$ of the ring $A$
which will be called a $\mathfrak{P}$-series. The ring
$A_{\mathfrak{P}}$ from Lemma~\ref{P6} acts on all the quotients $\p_1\cdots \p_i/\p_1\cdots
\p_{i+1}$
 as each subring $\mathbb{Z}\cdot 1 +\p_{i+1}$ of $A$ acts on the corresponding quotient  $\p_1\cdots \p_i/\p_1\cdots
\p_{i+1}$ for each $i=1, \ldots, m$.

Recall that $L_2$ is the language of two-sorted modules. Take a module $\langle M, A\rangle$.  
If $A$ is a scalar ring admitting a decomposition of zero $\mathfrak{P}$, then $\mathfrak{P}$-series of the
ring $A$ induces a series of $A$-modules
$$M\geq \p_1M\geq \p_1\p_2M\geq \ldots \geq \p_1\cdots \p_mM=0,$$
which will also be called a \emph{$\mathfrak{P}$-series for the $A$-module $M$} or a
\emph{special series} for $M$.
The following lemma is a direct corollary of Proposition~\ref{decom}.
\begin{lem}\label{P8} There exists a formula $\phi_i(x,\bar{y}_1, \ldots ,\bar{y}_i)$ of $L_2$
such that if $\p_1\cdots \p_m=0$ is a decomposition of zero in the scalar ring
$A$ and $\p_k=id(\bar{a}_k)$, then $\phi_i(x,\bar{a}_1, \ldots ,
\bar{a}_i)$ defines the submodule $M_i=\p_1\cdots \p_iM$ in the two-sorted model $\langle M,A \rangle$.\end{lem}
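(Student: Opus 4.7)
The plan is to reduce the submodule $M_i=\p_1\cdots\p_i M$ to a bounded-width expression in the generators of the tuples $\bar{a}_1,\ldots,\bar{a}_i$, exactly as was done for the ideal $\p_1\cdots\p_i$ itself in Lemma~\ref{P7}. Write $\bar{a}_k=(a_{k,1},\ldots,a_{k,n_k})$. Since each $\p_k=id(\bar{a}_k)$, the ideal product $\p_1\cdots\p_i$ is generated as an ideal of $A$ by the finite set
\[
G_i=\{\,a_{1,j_1}\,a_{2,j_2}\cdots a_{i,j_i}\;:\;1\le j_k\le n_k,\ k=1,\ldots,i\,\}.
\]
Consequently the $A$-submodule $M_i=\p_1\cdots\p_iM$ is generated by the set $\{g\cdot m : g\in G_i,\ m\in M\}$.

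The key bounded-width observation is that, because scalars can be absorbed into the module coordinates, every element of $M_i$ can be written as a sum with exactly $|G_i|=n_1n_2\cdots n_i$ terms, one for each generator in $G_i$. Indeed, an arbitrary element of $M_i$ has the form $\sum_s p_s m_s$ with $p_s\in \p_1\cdots\p_i$ and $m_s\in M$; expanding $p_s=\sum_{\bar{\jmath}}\beta_{s,\bar{\jmath}}\,(a_{1,j_1}\cdots a_{i,j_i})$ with $\beta_{s,\bar{\jmath}}\in A$ and collecting terms yields
\[
\sum_s p_sm_s=\sum_{\bar{\jmath}}(a_{1,j_1}\cdots a_{i,j_i})\cdot\Bigl(\sum_s\beta_{s,\bar{\jmath}}m_s\Bigr),
\]
which is a sum indexed by $\bar{\jmath}=(j_1,\ldots,j_i)$ of terms $g_{\bar{\jmath}}\cdot m'_{\bar{\jmath}}$ with $m'_{\bar{\jmath}}\in M$. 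The converse inclusion is immediate since each $g_{\bar{\jmath}}\in\p_1\cdots\p_i$.

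This translates directly into the $L_2$ formula
\[
\phi_i(x,\bar{y}_1,\ldots,\bar{y}_i)\;\define\;\exists (z_{\bar{\jmath}})_{\bar{\jmath}}\;\Bigl(x=\sum_{\bar{\jmath}} s\bigl(z_{\bar{\jmath}},\,y_{1,j_1}y_{2,j_2}\cdots y_{i,j_i}\bigr)\Bigr),
\]
where the variables $z_{\bar{\jmath}}$ range over the $M$-sort, the products $y_{1,j_1}\cdots y_{i,j_i}$ are computed in the $A$-sort, and $s$ denotes the scalar-action predicate (with the convention $s(z,\alpha)$ for $\alpha\cdot z$). By the paragraph above, $\phi_i(x,\bar{a}_1,\ldots,\bar{a}_i)$ holds in $\langle M,A\rangle$ exactly for $x\in M_i$. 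There is no real obstacle here; the only point requiring attention is the bounded-width rewriting, which is precisely what allows the statement ``$x\in\p_1\cdots\p_iM$'' to be expressed by a single first-order formula rather than an infinite disjunction, and this is handled cleanly by absorbing the $A$-coefficients into the module variables as above.
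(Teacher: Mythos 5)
Your proof is correct and is essentially the argument the paper intends: the paper omits an explicit proof (calling the lemma a direct corollary of the earlier definability results), and your bounded-width construction—absorbing the $A$-coefficients into the module variables so that every element of $\p_1\cdots\p_i M$ is a sum of exactly $n_1\cdots n_i$ terms $g_{\bar\jmath}\cdot m_{\bar\jmath}$—is precisely the module analogue of Lemma~\ref{P7}. The only cosmetic point is that the ternary predicate $s(m,a,n)$ forces you to introduce auxiliary existential variables for the products $y_{1,j_1}\cdots y_{i,j_i}$ and for the results of the scalar action, but that is routine and does not affect correctness.
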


The following proposition collects the main results of this section so far. 
\begin{prop}\label{big} 
Let $\la{M}$ be the class of all two-sorted modules $\langle M,A\rangle$ where $M$ is a finitely generated module over an FDZ-scalar ring $A$. Pick $\langle M, A\rangle$ in $\la{M}$ with $r(M)\leq n$. Assume there are tuples $\bar{a}_1$,$ \ldots $, $\bar{a}_m$ of elements of $A$ which satisfy $D_{\Lambda}(\bar{x}_1, \ldots , \bar{x}_m)$. Then the following hold uniformly with respect to all models $\langle N, B\rangle$ of $Th(\langle M, A\rangle)$ from $\la{M}$ which contain tuples $\bar{b}_{i}$ which satisfy $D_{\Lambda}(\bar{b}_1, \ldots , \bar{b}_m)$.
\begin{enumerate}

\item $Id(x,\bar{b}_i)$ defines in $B$ the prime ideal
$\q_i=id(\bar{b}_i)$;
\item If $\mathfrak{Q}=(\q_1, \ldots , \q_m)$ then
$char(\mathfrak{q})=\Lambda$;
\item $0=\q_1\cdots \q_m$;
\item $\phi_i(x,\bar{q}_1, \ldots , \bar{q}_i)$ defines the $i$-th
term $N_i=\q_1\cdots \q_iN$ of the special $\mathfrak{Q}$-series for $N$ in $\langle N,B \rangle$;
\item $r(B)\leq n$.
\item The formula $R_{n,\Lambda}(x,\bar{b}_1, \ldots , \bar{b}_m)$, defines the subring
$$B_{\mathfrak{Q}}=\bigcap_{i=1}^m(\mathbb{Z}\cdot 1+\q_i),$$ in $B$.

\end{enumerate}\end{prop}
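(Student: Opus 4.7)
The plan is to treat this proposition as a consolidation of the lemmas that precede it; the only genuinely new observation needed is a bridge between the two sorts. That bridge is the remark that elementary equivalence $\langle N, B\rangle \equiv \langle M, A\rangle$ of two-sorted modules in $L_2$ restricts to elementary equivalence $B \equiv A$ of the ring reducts in $L_1$, because any $L_1$-sentence about the ring sort lifts without change to an $L_2$-sentence that ignores the module sort.

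With that in hand, items (1), (2), and (3) are exactly what Lemma~\ref{decom} yields when applied inside $B$: the tuples $\bar{b}_i$ satisfy $D_\Lambda$ by hypothesis, and the minimality clause in the choice of $\mathfrak{P}$ propagates via $A \equiv B$ to force $Char(\mathfrak{Q}) = \Lambda$. Item (4) is a direct reading of Lemma~\ref{P8} with parameters $\bar{b}_k$ in place of $\bar{a}_k$, which is legitimate once (1) has identified $\q_k$ with $id(\bar{b}_k)$. Item (6) is then Lemma~\ref{P6} applied inside $B$, and its hypothesis $r(B) \leq n$ is precisely item (5).

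Only item (5) requires anything beyond citation. To obtain it, take the sentence $\varphi_n$ produced by Lemma~\ref{P10} for $A$ (replacing $n$ if necessary by $\max\{r(A),r(M)\}$ so that a single $n$ bounds both sorts simultaneously). Since $A \models \varphi_n$ and $B \equiv A$ in $L_1$, transfer along the bridge above gives $B \models \varphi_n$ and therefore $r(B) \leq n$. The one thing left to check is that the formulas supplied by the earlier lemmas are uniform, i.e.\ that the same formula works for every $\langle N, B\rangle$ in $\la{M}$; but this is automatic, because each of Lemmas~\ref{decom}, \ref{P8}, \ref{P10}, and \ref{P6} was proved by exhibiting a single first-order formula in a fixed language of the appropriate sort.

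The main ``obstacle,'' to the extent there is one, was absorbed into the proofs of Lemmas~\ref{P10} and~\ref{julia:lem}, where Julia Robinson's definability of $\Z$ inside number fields had to be combined with the prime-decomposition machinery of Section~\ref{Prime-decom:sec}; the present proposition simply harvests those outputs and verifies that they all cohere under the single elementary equivalence $\langle N,B\rangle \equiv \langle M,A\rangle$.
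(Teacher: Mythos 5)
Your proposal is correct and follows essentially the same route as the paper, which likewise proves the proposition by consolidating Lemmas~\ref{decom}, \ref{P6}, \ref{P8}, and \ref{P10} (your attributions are in fact more accurate than the paper's own, which appears to cite \ref{P9} and \ref{P8} where \ref{P10} and \ref{P6} are meant). Your explicit note that $L_2$-elementary equivalence restricts to $L_1$-elementary equivalence of the ring sorts, and your patch replacing $n$ by $\max\{r(A),r(M)\}$ for item (5), are sensible clarifications of steps the paper leaves implicit.
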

\begin{proof} Items (1)-(4) follow from  Lemma~\ref{decom},
\ref{P6} and  \ref{P9}. Part (5) follows directly from Lemma~\ref{P9}. To prove (6.) we note that by (5), $r(B)\leq n$. So the statement follows from Lemma~\ref{P8}.
 \end{proof}
Finally we are ready to finish the proof of the main technical result of this section. Recall that by a $\Z$-pseudo-basis for finitely generated abelian group $M$ we simply mean a minimal generating set for $M$ as an abelian group. Assume $\bar{u}=(u_1, \ldots , u_s)$ is an ordered $\Z$-pseudo-basis for $M$ and let $M_i$ be the subgroup of $M$ generated by $u_i, \ldots, u_s$. Again, recall that the period $e_i$ of $u_i$ is the order of the cyclic group $M_i/M_{i+1}$ if $M_i/M_{i+1}$ is finite, and we set $e_i=\infty$ if the corresponding quotient is infinite. 
\begin{prop} \label{basedef} Let $\langle M, A\rangle$ and $\langle N, B\rangle$ be finitely generated modules over FDZ-scalar rings $A$ and $B$ respectively.  Let the collection of prime ideals $\mathfrak{P}=(\p_1,\ldots ,
\p_m)$, $\p_i=id(\bar{a}_i)$, satisfy the usual conditions, say as in
Proposition~\ref{big}. Assume $\bar{c}=(c_1, \ldots , c_n)$ is a $\Z$-pseudo-basis of period $\bar{f}=(f_1, \ldots f_n)$ associated to the   $\mathfrak{P}$-series of $A$, and $\bar{u}=(u_1, \ldots, u_s)$ is a pseudo-basis of period $\bar{e}=(e_1, \ldots , e_s)$ associated with the $\mathfrak{P}$-series for $M$. Then there exists a
formula $$\phi_{\mathfrak{P},n}(x_1, \ldots , x_n,y_1, \ldots , y_s, \bar{y}_1, \ldots ,
\bar{y}_m)$$ defining in the two-sorted structure $M_A^*=\langle M, A, \bar{a}_1,\ldots ,
\bar{a}_m\rangle$ the set of all $\Z$-pseudo-bases of periods $\bar{f}$ and $\bar{e}$ associated
with the $\mathfrak{P}$-series for $A$ and $M$, respectively. Moreover the formula $\phi_{\mathfrak{P},n}(\bar{x}, \bar{y}, \bar{b}_1, \ldots, \bar{b}_m)$ defines in the model $N_B^*=\langle N,B, \bar{b}_1, \ldots \bar{b}_m\rangle$ the set of all $\Z$-pseudo-bases $\bar{d}$ and $\bar{v}$ of $N$ associated to the corresponding special $\mathfrak{Q}$-series of $B$ and $N$ if $\bar{b}_1$, \ldots , $\bar{b}_m$ satisfy the formula $D_{\Lambda}$.  
\end{prop}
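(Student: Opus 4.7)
The plan is to build $\phi_{\mathfrak{P},n}$ as a conjunction of first-order conditions that encode the pseudo-basis axioms together with the periods $\bar f=(f_1,\ldots,f_n)$ and $\bar e=(e_1,\ldots,e_s)$. By Lemma~\ref{P7}, the ideals $A_i:=\mathfrak{p}_1\cdots\mathfrak{p}_i$ are defined uniformly by $Id_i(x,\bar y_1,\ldots,\bar y_i)$ with parameters $\bar a_1,\ldots,\bar a_i$; by Lemma~\ref{P8}, the submodules $M_i:=A_i M$ are defined by $\phi_i$; and by Lemma~\ref{P5}, the subrings $\mathbb{Z}\cdot 1+\mathfrak{p}_i$ of $A$ are defined by $R_{n,\lambda_i}$. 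These furnish the building blocks of $\phi_{\mathfrak{P},n}$.

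The main difficulty is that a $\mathbb{Z}$-pseudo-basis requires quantifying over $\mathbb{Z}$-coefficient combinations, which is not a priori first-order in the FDZ setting. The key observation, which overcomes this obstacle, is that in any commutative ring $\mathfrak{p}_{i+1}\cdot A_i\subseteq A_{i+1}$ (and $\mathfrak{p}_{i+1}\cdot M_i\subseteq M_{i+1}$), so the action of $\mathbb{Z}\cdot 1+\mathfrak{p}_{i+1}$ on the quotient $A_i/A_{i+1}$ (resp.\ $M_i/M_{i+1}$) coincides with the standard $\mathbb{Z}$-action upon reducing modulo $\mathfrak{p}_{i+1}$. Thus the definable subring $\mathbb{Z}\cdot 1+\mathfrak{p}_{i+1}$ serves as a first-order proxy for $\mathbb{Z}$ when acting on successive pieces of the $\mathfrak{P}$-series, and this is where the construction can proceed.

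With this trick in hand, the periods $\bar f$ and $\bar e$ determine consecutive index blocks $I_0,\ldots,I_{m-1}\subseteq\{1,\ldots,n\}$ and $J_0,\ldots,J_{m-1}\subseteq\{1,\ldots,s\}$ such that $(c_j)_{j\in I_i}$ (resp.\ $(u_j)_{j\in J_i}$) must project to a pseudo-basis of $A_i/A_{i+1}$ (resp.\ $M_i/M_{i+1}$) with the prescribed periods. I will conjunct, for each block, the clauses: (a) placement, $c_j\in A_{i(j)}$ and $u_j\in M_{i(j)}$; (b) generation, $\forall a\in A_i\,\exists b_j\in R_{n,\lambda_{i+1}}\bigl(a-\sum_{j\in I_i}b_j c_j\in A_{i+1}\bigr)$, and analogously for $M$; and (c) the period condition, namely $f_j c_j\in U_{j+1}$ together with $\ell c_j\notin U_{j+1}$ for $0<\ell<f_j$ when $f_j<\infty$, and $\forall b\in R_{n,\lambda_{i+1}}\bigl(b c_j\in U_{j+1}\to b\in\mathfrak{p}_{i+1}\bigr)$ when $f_j=\infty$, where membership in the subgroup $U_{j+1}=\langle c_{j+1},\ldots,c_n\rangle$ is itself unfolded using clause~(b). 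The parallel conditions for $\bar u$ complete $\phi_{\mathfrak{P},n}$.

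Uniformity across the second model is then immediate from Proposition~\ref{big}: whenever $\bar b_1,\ldots,\bar b_m$ satisfy $D_\Lambda$ in $\langle N,B\rangle$, the very same formulas $Id_i,\phi_i,R_{n,\lambda_i}$ define the analogous ideals $\mathfrak{q}_1\cdots\mathfrak{q}_i$, submodules $N_i$, and subrings $\mathbb{Z}\cdot 1+\mathfrak{q}_i$, so $\phi_{\mathfrak{P},n}(\bar x,\bar y,\bar b_1,\ldots,\bar b_m)$ picks out precisely the pseudo-bases of $B$ and $N$ associated with the $\mathfrak{Q}$-series.
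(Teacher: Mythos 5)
Your proposal is correct and takes essentially the same route as the paper: both reduce to the successive quotients $M_i/M_{i+1}$ of the $\mathfrak{P}$-series, use the definable subrings $\mathbb{Z}\cdot 1+\mathfrak{p}_{i+1}$ (the paper phrases this via $A_{\mathfrak{P}}/(\mathfrak{p}_{i+1}\cap A_{\mathfrak{P}})\cong\mathbb{Z}$ or $\mathbb{Z}/p\mathbb{Z}$) as a first-order proxy for the $\mathbb{Z}$-action on each quotient, and then express the generation and period conditions block by block, with uniformity supplied by Proposition~\ref{big}. You merely spell out the clauses that the paper dismisses as ``easy to write out.''
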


\begin{proof} By Proposition~\ref{big} the models
$\langle M_i, A_{\mathfrak{P}}, \bar{a}_i\rangle$ are definable in
$M_A^*$ uniformly with respect to all models $N_B^*$, with $N$ and $B$ satisfying the  hypotheses, of $Th(M_A^*)$. So it suffices to find a formula $\phi_i$ for each $i$,
defining a basis for $M_i/M_{i+1}$ of fixed period
$\bar{e}_i$.  the model
$\langle M_i, A_{\mathfrak{P}}\rangle$ is interpretable in $\langle
M,A\rangle$ with the help of any tuples of generating elements
$\bar{a}_1$, \ldots , $\bar{a}_m$ satisfying the formula
$D_{\Lambda}$. Since $M_{i+1}=\p_{i+1}M_i$, the model $\langle \ov{M}_i,A_i
\rangle$ where $\ov{M}_i=M_i/M_{i+1}$ and
$A_i=A_{\mathfrak{P}}/(\p_{i+1}\cap A_{\mathfrak{P}})$ is
obviously interpretable in $\langle M_i, A_{\mathfrak{P}}\rangle$ with the
help of $\bar{a}_{i+1}$. In the view of the fact that $A_i$ is
either $\mathbb{Z}$ or the finite field $\mathbb{Z}/p\mathbb{Z}$ and
the action of $A_i$ on
$\ov{M}_i$ is interpretable in $\langle M,A\rangle$ it is easy to write out a formula
defining all bases of $\ov{M}_i$ of given period $\bar{e_i}$ and
thus to construct the desired formula $\phi_i$. Again $\phi_i$ depends on the tuples $a_i$ as far as they satisfy $D_\Lambda$. So again by Proposition~\ref{big} the formulas $\phi_i$ define all $\Z$-pseudo-bases for the $N_i/N_{i+1}$ in a model $N_B^*$ of $Th(M_A^*)$  where the $N_i$ are defined in $N^*_B$ with the same formulas that define the $M_i$  in $M^*_A$.  
\end{proof}
Assume $A$ and $M$ satisfy the usual conditions, $\bar{c}=(c_1, \ldots , c_n)$ is a $\Z$-pseudo-basis of $A$ of period $\bar{f}=(f_1, \ldots f_n)$ and $\bar{u}=(u_1, \ldots , u_s)$ is a $\Z$-pseudo-basis of $M$ of period $\bar{e}=(e_1, \ldots , e_s)$. Then
\begin{enumerate}
\item for any $1\leq i,j\leq n$ there exist integers $s_k(c_i,c_j)$ such that $c_ic_j=\sum_{k=1}^ns_k(c_i,c_j)c_k$, 
\item for any $1\leq i \leq n$ and $1\leq j \leq s$ there exist integers $s'_k(c_i,u_j)$ such that $c_iu_j=\sum_{k=1}^ss'_k(c_i,u_j)u_k$,
\item for any $1 \leq i \leq n$ if $f_i< \infty $ then there exist integers $t_k(f_ic_i)$ such that $f_ia_i=\sum_{k=1}^n t_k(f_ic_i)c_i$,
\item for any $1\leq i \leq s$ if $e_i< \infty$ then there exist integers $t'_k(e_iu_i)$ such that $e_iu_i=\sum_{k=1}^s t'_k(e_iu_i)u_i$.
\end{enumerate}
The integers introduced above are called \emph{the structural constants} associated to the pseudo-bases $\bar{a}$ and $\bar{u}$. We assume an arbitrary but fixed ordering on the set of structure constants. It is easy to verify that $\langle M, A \rangle$ is determined up to isomorphism, as a two-sorted module, by the periods $\bar{e}$, $\bar{f}$ and the associated structure constants.

 Finally we are ready to complete the proof of Theorem~\ref{elemmod:thm}. 
 
 \noindent\emph{Proof of Theorem~\ref{elemmod:thm}.} From Proposition~\ref{basedef} we have the formula
$\varphi_{\mathfrak{P},n}$ which defines all $\mathbb{Z}$-pseudo-bases $\bar{c}$ and $\bar{u}$ of periods $\bar{f}$ and $\bar{e}$ for $A$ and $M$, respectively, in
$\langle M,A\rangle$. Again by Proposition~\ref{basedef} the same formula defines in $\langle N,B\rangle$ similar $\Z$-pseudo-bases $\bar{d}$ and $\bar{v}$ of $B$ and $N$. We need only to describe the structural constants associated with the pseudo-bases $\bar{c}$ and $\bar{u}$ for $A$ and $M$ respectively. This can be done by a formula, say $\psi_{A,M}$, of the language $L_2$ because all these constants are
integers and there are only finitely many of them. Obviously this implies that the $\Z$-pseudo-bases $(\bar{u},\bar{c})$ and $(\bar{v},\bar{d})$ are $\Z$-pseudo-bases of  
$\langle M,A\rangle$ and $\langle N, B\rangle$ respectively of the same periods and structure constants. So the theorem follows.

\qed

\section{Elementary equivalence of FDZ-algebras}\label{main:sec}

Finally here we prove the main theorem of this paper. Let us first put together a proof of Theorem~\ref{elem-iso-alg:thm}.

\noindent\emph{Proof of Theorem~\ref{elem-iso-alg:thm}.} The proof is entirely similar to that of Theorem~\ref{elemmod:thm}. In addition to the structure constants listed in items (1)-(4) above for a two-sorted module we need to describe the structure constants defining the multiplication for the ring $C$. Keeping the same notation as the proof of mentioned theorem and replacing $M$ by $C$ we need to describe the integers $t''_k(u_iu_j)$ for every $1\leq i,j\leq s$, where $u_iu_j= \sum_{k=1}^s t''_k(u_iu_j)u_k$. Again these new structure constants are also integers and could be captured in the first-order theory of $C$. The new structure constants together with the ones from items (1)-(4) above describe $\langle C, A\rangle$ up to isomorphism by a single first-order formula $\phi_{C,A}$ of $Th(\langle C,A\rangle)$. Clearly if an algebra $\langle D, B\rangle$ from $\mathcal{A}$ satisfies $\phi_{C,A}$ then $\langle C, A \rangle\cong \langle D, B\rangle.$   
\qed

We recall some notation and introduce some new ones. For an FDZ-algebra $R$ define $M(R)\define Is(R^2+Ann(R))$ and $N(R)\define Is(R^2)+Ann(R)$. Note that that $M(R)/N(R)$ is a finite abelian group. 
\begin{lem} The ideals $M(R)$ and $N(R)$ are uniformly definable in $R$ if $R$ is an FDZ-algebra.\end{lem}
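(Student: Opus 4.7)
The plan is to build the formulas for $M(R)$ and $N(R)$ in stages, defining each constituent piece in turn and then combining. First, the annihilator is absolutely definable in every ring by the universal formula $\forall y\,(xy=yx=0)$, so this piece costs nothing.

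Next I would exploit the FDZ hypothesis to make $R^2$ definable. Since $R^+$ is a finitely generated abelian group, the additive subgroup $R^2$ is itself finitely generated, hence has finite width: there is some $w\in\N$ with $R^2=\{\sum_{i=1}^w x_iy_i:x_i,y_i\in R\}$. So $R^2$ is defined by the existential formula asserting the existence of such a presentation with $w$ summands, uniformly inside the class of rings satisfying the sentence $\phi_w$ of equation~\eqref{R^2Sen:eqn}. Because the sum of two definable ideals is definable, $R^2+Ann(R)$ is then also definable.

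To handle the isolator, I would use the following observation. For any ideal $I$ of $R$ with $R/I$ a finitely generated abelian group (automatic when $R$ is FDZ), the torsion subgroup of $R/I$ coincides with $Is(I)/I$ and is finite, so has some finite exponent $e(I)$. Hence $x\in Is(I)$ iff $e(I)\,x\in I$, and if $\phi_I(x)$ defines $I$ then $\phi_I(e(I)\,x)$ defines $Is(I)$. Applying this with $I=R^2$ gives a definition of $Is(R^2)$, and with $I=R^2+Ann(R)$ gives a definition of $M(R)=Is(R^2+Ann(R))$. Finally, $N(R)=Is(R^2)+Ann(R)$ is then the sum of two definable ideals, hence definable.

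The only point worth flagging, which I take to be the real content of the word ``uniformly,'' is that the formulas produced depend on the integer parameters $w$, $e(R^2)$, and $e(R^2+Ann(R))$, all finite invariants of $R$ but varying across the class of all FDZ-algebras. So uniformity should be read in the sense of Section~\ref{interpret1}, i.e., across models of $Th(R)$. This is automatic because each of those integer bounds is captured by a first-order sentence true in $R$ (bounded width for $R^2$, and bounded exponent of the relevant torsion quotients), so the same formulas define $M$ and $N$ in any elementarily equivalent FDZ-algebra.
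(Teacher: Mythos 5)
Your proof is correct and follows essentially the same route as the paper: $Ann(R)$ and $R^2$ (via finite width) are uniformly definable, and $Is(I)$ is defined by $e(I)\,x\in I$ using the finite exponent of the torsion quotient. The only small imprecision is at the end: the width bound is indeed a single sentence $\phi_w$, but the fact that $e(I)\,x\in I$ captures all of $Is(I)$ in an elementarily equivalent model is not one sentence but the infinite scheme $\forall x\,(nx\in I\to e(I)\,x\in I)$, $n\in\N$, all of whose members lie in $Th(R)$ --- which is exactly how the paper phrases it, and your framing of uniformity as ``across models of $Th(R)$'' already makes this harmless.
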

\begin{proof} The ideal $Ann(R)$ is clearly uniformly definable. The ideal $R^2$ is uniformly definable among all FDZ-models of $Th(R)$, since $R^2$ has finite width.

Assume $I$ is uniformly definable in $R$. first $Is(I)/I$ is an abelian group of finite order, say $m$. Then the formula expressing $mx\in I$ uniformly defines $Is(I)$ among all FDZ-models of $Th(R)$. That is because they need to satisfy the following sentences for all $n\in \N\setminus \{0\}$.
\begin{equation}\label{TypeIso:eqn} \Psi_n: \forall x (nx \in I \to mx\in I).\end{equation}\end{proof}

\begin{lem}\label{A(R)-inter:lem} For an FDZ-algebra $R$ the maximal ring of scalars $A(R)$ and its actions on $R/Ann(R)$ and $R^2$ are absolutely interpretable in $R$.\end{lem}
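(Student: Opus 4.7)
The plan is to assemble this from the bilinear–map machinery of Section~\ref{bilin} together with the definability facts already at our disposal. First, both $Ann(R)$ and $R^2$ are absolutely definable in $R$ uniformly with respect to $Th(R)$: the annihilator is definable in any ring, and $R^2$ has finite width in any FDZ-algebra, so it is uniformly defined by an instance of the sentence $\phi_w$ from~\eqref{R^2Sen:eqn} (the width is bounded by, e.g., the number of generators of $R^+$, and the same bound persists across $Th(R)$ restricted to FDZ-algebras by the argument used for~\eqref{TypeIso:eqn}). Hence the quotient $R/Ann(R)$, the submodule $R^2$, the induced bilinear map
$$f_{RF}: R/Ann(R)\times R/Ann(R)\to R^2,$$
and the canonical homomorphism $\eta:R^2\to R/Ann(R)$ are all absolutely interpretable in $R$.

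Next I would check that $f_{RF}$ satisfies the hypotheses of Theorem~\ref{ringinter}. By construction $f_{RF}$ is full (its image generates $R^2$) and non-degenerate (an element of $R/Ann(R)$ annihilating the whole image would lift to an element of $Ann(R)$). To see that $f_{RF}$ has finite type, note that $R^+$ is finitely generated, so the same holds for $(R/Ann(R))^+$ and for $(R^2)^+$; a finite generating set of $R/Ann(R)$ gives a finite left and right complete system, bounding $c_1(f_{RF})$ and $c_2(f_{RF})$, while finiteness of the width of $R^2$ in $R$ bounds $w(f_{RF})$. Applying Theorem~\ref{ringinter} to $f_{RF}$ yields that the largest ring of scalars $P(f_{RF})$, together with its actions on $R/Ann(R)$ and $R^2$, is absolutely interpretable in $\mathfrak{U}(f_{RF})$, and hence in $R$.

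It then remains to cut out $A(R)$ inside $P(f_{RF})$. By the definition recalled before Proposition~\ref{A(R)exists:prop}, $A(R)$ is precisely the subring of $P(f_{RF})$ consisting of those $\alpha$ for which $\eta:R^2\to R/Ann(R)$ is $\alpha$-linear, i.e.\ $\alpha\,\eta(u)=\eta(\alpha u)$ for every $u\in R^2$. Since $\eta$ and both $P(f_{RF})$-actions are already interpreted, this linearity condition is a single first-order formula in the interpreted structure, so it carves out $A(R)$ as an absolutely definable subring of $P(f_{RF})$, together with its inherited actions on $R/Ann(R)$ and $R^2$.

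The only delicate point is the uniformity of the finite-type assumption: one must verify that the three numbers $w, c_1, c_2$ computed for $f_{RF}$ in $R$ bound the corresponding quantities for every FDZ-model of $Th(R)$, so that the interpretation formulas of Theorem~\ref{ringinter} really work uniformly. This is ensured by the last sentence of Theorem~\ref{ringinter} together with the fact, noted above, that the width of $R^2$ and the rank of $R/Ann(R)$ are first-order controllable in the class of FDZ-algebras; once this is checked, the absolute interpretability of $A(R)$ and its actions follows.
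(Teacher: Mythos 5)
Your proposal is correct and follows essentially the same route as the paper: interpret the induced full non-degenerate finite-type map $f_{RF}$ in $R$, invoke Theorem~\ref{ringinter} to interpret $P(f_{RF})$ with its actions, and then carve out $A(R)$ as the definable subring of those $\alpha$ making $\eta:R^2\to R/Ann(R)$ $\alpha$-linear. Your added checks of non-degeneracy, finite type, and uniformity are details the paper leaves implicit but do not change the argument.
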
 
\begin{proof} The full non-degenerate bilinear map $f_{RF}:R/Ann(R)\times R/Ann(R) \to R^2$ induced by the product in $R$ is absolutely interpretable in $R$. So $P=P(f_{RF})$ an its actions on $R/Ann(R)$ and $R^2$ are interpretable in $R$ by Theorem~\ref{ringinter} since $f_{RF}$ is full, non-degenerate and of finite type. Note that 
$$A(R)=\{\alpha \in P: (\alpha x) + Ann(R)=\alpha(x+Ann(R)), \forall x\in R^2 \}.$$
Indeed $A(R)$ is clearly a definable unitary subring of $P$. This finishes the proof. \end{proof}
\begin{cor} \label{zinter:cor} If $R\equiv S$ are FDZ-algebras, then 
\begin{enumerate}
\item $A(R)\cong A(S)$ 
\item $R/Ann(R)\cong S/Ann(S)$ 
\item  $R^2 \cong S^2$
\end{enumerate}\end{cor}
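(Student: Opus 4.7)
The plan is to reduce Corollary~\ref{zinter:cor} to the two main isomorphism theorems of the paper, namely Theorem~\ref{elemmod:thm} and Theorem~\ref{elem-iso-alg:thm}, by packaging the data carried by $R$ into two-sorted structures that are absolutely interpretable in the pure ring $R$.

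First I would observe that the two-sorted algebra $\langle R/Ann(R),\, A(R)\rangle$ and the two-sorted algebra $\langle R^2,\, A(R)\rangle$ (each with its natural induced ring structure on the first sort and the scalar action of the second sort on the first) are absolutely interpretable in $R$ uniformly over the class of FDZ-algebras. Indeed, $Ann(R)$ is definable in any ring, $R^2$ has finite width in an FDZ-algebra (cf.\ the discussion around~\eqref{R^2Sen:eqn}), and Lemma~\ref{A(R)-inter:lem} supplies the interpretation of $A(R)$ together with its action on $R/Ann(R)$ and $R^2$. Since $R\equiv S$ and the interpreting formulas are fixed, the interpreted structures over $S$ are precisely $\langle S/Ann(S),A(S)\rangle$ and $\langle S^2,A(S)\rangle$, and we get
\[
\langle R/Ann(R),A(R)\rangle\equiv \langle S/Ann(S),A(S)\rangle,\qquad \langle R^2,A(R)\rangle\equiv \langle S^2,A(S)\rangle
\]
in the language $L_3$ of two-sorted algebras.

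Next I would verify that both of these two-sorted algebras lie in the class $\mathcal{A}$ of Theorem~\ref{elem-iso-alg:thm}. The first sort $R/Ann(R)$, respectively $R^2$, is finitely generated as an abelian group (being a subquotient of the finitely generated group $R^+$), hence \emph{a fortiori} finitely generated as an $A(R)$-module. For the scalar ring itself, $A(R)$ embeds into $End_{\mathbb{Z}}(R/Ann(R))$, and since $R/Ann(R)$ is a finitely generated abelian group, $End_{\mathbb{Z}}(R/Ann(R))^+$ is a finitely generated abelian group; therefore $A(R)^+$ is finitely generated, i.e.\ $A(R)$ is an FDZ-scalar ring. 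Thus both two-sorted structures belong to $\mathcal{A}$.

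Applying Theorem~\ref{elem-iso-alg:thm} to each, elementary equivalence upgrades to isomorphism of two-sorted algebras:
\[
\langle R/Ann(R),A(R)\rangle\cong \langle S/Ann(S),A(S)\rangle,\qquad \langle R^2,A(R)\rangle\cong \langle S^2,A(S)\rangle.
\]
Reading off the components of these two-sorted isomorphisms yields $A(R)\cong A(S)$, $R/Ann(R)\cong S/Ann(S)$ and $R^2\cong S^2$ as rings, which is exactly the three conclusions of the corollary. The only point that required real work is the verification that $A(R)$ is FDZ, which is the hypothesis that makes Theorem~\ref{elem-iso-alg:thm} applicable; everything else is a transparent unpacking of the interpretability statement from Lemma~\ref{A(R)-inter:lem}.
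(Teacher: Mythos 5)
Your proposal is correct and follows essentially the same route as the paper: interpret the two-sorted algebras $\langle R/Ann(R),A(R)\rangle$ and $\langle R^2,A(R)\rangle$ in $R$ via Lemma~\ref{A(R)-inter:lem}, transfer elementary equivalence, and apply Theorem~\ref{elem-iso-alg:thm} (the paper handles item (1) slightly more directly via Corollary~\ref{scalarrings:cor}, but that is the same idea). Your explicit check that $A(R)$ is an FDZ-scalar ring, via the embedding into $End_{\mathbb{Z}}(R/Ann(R))$, is a point the paper asserts without proof, so including it is a small improvement rather than a deviation.
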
   
\begin{proof} To prove (1) note that since $A(R)$ and $A(S)$ are interpreted in $R$ and $S$ with the same formulas $A(R)\equiv A(S)$. Since $R$ and $S$ are FDZ-algebras $A(R)$ and $A(S)$ are FDZ-scalar rings. So by Corollary~\ref{scalarrings:cor} $A(R)\cong A(S)$. To prove (2) we note that by Lemma~\ref{A(R)-inter:lem}, the two sorted algebra $\langle R/Ann(R), A(R) \rangle$ is absolutely interpretable in $R/Ann(R)$. So indeed $\langle R/Ann(R), A(R) \rangle\equiv \langle  S/Ann(S), A(S)\rangle$. By Theorem~\ref{elem-iso-alg:thm} we have $\langle R/Ann(R), A(R) \rangle\cong \langle S/Ann(S),  A(S)$. In particular this implies $$R/Ann(R)\cong S/Ann(S)$$ as rings. (3) is similar to (2).\end{proof} 

\begin{lem}\label{mainalgR:lem} Let $R$ be a FDZ-algebra and assume $\bar{u}$ is a pseudo-basis of $R$ adapted to the series 	\begin{equation}\label{def-series:eq} R \geq  M(R) \geq N(R) \geq Is(R^2) \geq 0.\end{equation}
Then there exists a formula $\Phi(\bar{x})$ of the language rings and some fixed integers $0<l<m<n<r$, such that $R\models \Phi(\bar{u})$ and $\Phi(\bar{u})$ expresses that
\begin{enumerate}
	\item 
	\begin{enumerate}
	\item $u_1+M(R), \ldots , u_{l-1}+M(R)$ is a basis of the free abelian group $R/M(R)$,
	\item $u_{l}+Is(R^2), \ldots , u_{m-1}+Is(R^2)$ is a pseudo-basis of the finite abelian group $M(R)/N(R)$ providing the invariant factor decomposition for $M(R)/N(R)$, i.e.
		$$\frac{M(R)}{N(R)}\cong \frac{\Z}{e_l\Z}\oplus\cdots\oplus \frac{\Z}{e_{m-1}\Z},$$
		where $e_l|e_{l+1}|\cdots |e_{m-1}$, 
		\item The images of $u_m=e_lu_l$,$\ldots$, $u_{m+p-1}=e_{m-1}u_{m-1}, u_{m+p}$, $\ldots$, $u_{n-1}$ in $N(R)/Is(R^2)$, where $p=m-l$, generate a subgroup of index, say $d$, prime to $e=e_{l}\cdots e_{m-1}$ of $N(R)/Is(R^2)$. 
	\item $u_{n}, \ldots , u_l$ is a pseudo-basis of $Is(R^2)$,
\end{enumerate}
	\item  for all $1\leq i,j\leq r$ and some fixed integers $t_{ijk}$, $\ds u_iu_j=\sum_{k=n}^rt_{ijk}u_k$,
	\item for those $l\leq i \leq r$ such that $e_i < \infty$ and some fixed $t_{ik}$, $\ds e_iu_i=\sum_{k=m}^rt_{ik}u_k$ In particular for $l\leq i \leq m-1$,
	$$e_iu_i = u_{m+i}+\sum_{k=n}^rt_{ik}u_k.$$
\end{enumerate}
\end{lem}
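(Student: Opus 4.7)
The plan is to construct $\Phi(\bar{x})$ as a finite conjunction of first-order subformulas, each encoding one of the enumerated properties of $\bar{u}$ in $R$. The preceding lemma supplies uniform defining formulas $\phi_M(x)$, $\phi_N(x)$, $\phi_I(x)$ for $M(R)$, $N(R)$, $Is(R^2)$ in any FDZ-algebra elementarily equivalent to $R$. This handles the membership requirements of (1) immediately: include clauses like $\phi_M(x_i) \wedge \neg\phi_N(x_i)$ for $l \leq i \leq m-1$, together with analogous clauses pinning each $x_i$ to the correct layer of the series~(\ref{def-series:eq}).

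Items (2) and (3) translate directly: each structural equation $x_ix_j = \sum_k t_{ijk}x_k$ and $e_ix_i = \sum_k t_{ik}x_k$ is a first-order ring-language equation because the coefficients $t_{ijk}, t_{ik}$ and the periods $e_i$ are fixed integers and $\Z$-linear combinations are expressible via iterated addition, with the claimed index ranges in the sums built in as vanishing of the other coefficients. For (1)(b), the invariant factor decomposition of the finite group $M(R)/N(R)$ is a finite piece of data: I assert $e_ix_i \in N(R)$ for $l \leq i \leq m-1$ together with the finite statement that the $e_l\cdots e_{m-1}$ cosets $a_lx_l + \cdots + a_{m-1}x_{m-1} + N(R)$ with $0 \leq a_i < e_i$ are pairwise distinct and exhaust $M(R)/N(R)$. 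The divisibility conditions $e_i \mid e_{i+1}$ are constraints on the fixed integers themselves and are absorbed into the choice of $\Phi$. For (1)(c), assert that $d\cdot z$ lies in the subgroup generated by the prescribed images in $N(R)/Is(R^2)$ for every $z \in N(R)$, where the index $d$ coprime to $e$ is fixed; this is first-order because $d$ and the generating set are concrete.

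The substantive work lies in (1)(a) and (1)(d), the pseudo-basis conditions on the torsion-free quotient $R/M(R)$ and on the finitely generated abelian group $Is(R^2)$. I handle these following the strategy of Proposition~\ref{basedef}: the quotients are interpretable abelian groups of known rank and torsion type, and the predicate ``is a $\Z$-pseudo-basis of given period'' is expressible by a first-order formula using the definable $\Z$-action by iterated addition, exactly as in that proposition. The main obstacle is ensuring that the conjunction of the individual clauses actually forces $\bar{x}$ to generate $R$ as an abelian group rather than merely satisfy each condition accidentally; this is overcome because the data in (1)--(3) is structurally complete in the manner of Theorem~\ref{elemmod:thm}, so any tuple $\bar{x}$ satisfying $\Phi$ in an elementarily equivalent FDZ-algebra generates an additive subgroup that maps correctly onto each quotient of the series~(\ref{def-series:eq}) with the predicted images, and therefore exhausts the ambient ring. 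Setting $\Phi(\bar{x})$ to be the conjunction of all these subformulas yields the desired formula, and $R \models \Phi(\bar{u})$ by construction.
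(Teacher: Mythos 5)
There is a genuine gap, and it sits exactly at the point the paper's proof is designed to address: the quotient $P(R)\define N(R)/Is(R^2)$. Your rendering of (1)(c) --- ``assert that $d\cdot z$ lies in the subgroup generated by the prescribed images for every $z\in N(R)$'' --- is not a first-order condition. The subgroup of $N(R)/Is(R^2)$ generated by finitely many elements is the set of all their $\Z$-linear combinations, an infinite union of definable sets; and unlike the quotients $R/M(R)$ and $Is(R^2)$ (where $\langle R/M(R),\Z\rangle$ and $\langle Is(R^2),\Z\rangle$ are interpretable in $R$ via the proof of Corollary~\ref{zinter:cor}, which is what legitimately rescues your (1)(a) and (1)(d)), the $\Z$-action on $N(R)/Is(R^2)$ is \emph{not} in general interpretable in $R$. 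This is precisely why the lemma states (1)(c) in the weak form ``index prime to $e$'' rather than ``basis of $N(R)/Is(R^2)$.'' The paper's device is to pass to the finite quotient $P/eP$ with $e=e_l\cdots e_{m-1}$: that the images of $u_m,\ldots,u_{n-1}$ form a basis of the finite, uniformly definable group $P/eP$ is first-order, and it is equivalent to their generating a subgroup of $P$ of index coprime to $e$. Your formula, besides not being first-order, also tries to pin down the exact index $d$, which is neither needed nor achievable; in Theorem~\ref{mainalgS:thm} the index arising in $S$ is a possibly different $d$, with only $\gcd(d,e)=1$ preserved.

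A second, related confusion is your closing claim that a tuple satisfying $\Phi$ in an elementarily equivalent FDZ-algebra ``exhausts the ambient ring.'' That is false and runs against the point of the whole construction: the proof of Theorem~\ref{mainalgS:thm} notes explicitly that the $v_i$ in general generate only a finite-index subring of $S$. For $R$ itself nothing of the sort needs proving ($\bar u$ is a pseudo-basis by hypothesis), and the lemma does not claim that $\Phi$ forces generation. Finally, ``the definable $\Z$-action by iterated addition'' is not what makes (1)(a) and (1)(d) work: iterated addition only yields each fixed integer multiple as a term, not a definable action of a copy of $\Z$, which is what expressing ``basis of a free abelian group'' requires. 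The correct source is the interpretability of $\Z$ inside the scalars together with its interpretable action on $R/Ann(R)$ and $R^2$, as in Proposition~\ref{basedef} and Lemma~\ref{A(R)-inter:lem}.
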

\begin{proof} Pick a pseudo-basis of $R$ as in the statement. Note that $Ann(R)\leq M(R)$ and consider the canonical epimorphism $\theta: R/Ann(R)\to R/M(R)$. Then $\theta$ is interpretable in $R$ and so by proof of Corollary~\ref{zinter:cor} $\langle R/M(R), \Z \rangle$ is interpretable in $R$. The same holds for $Is(R^2)$, i.e. $\langle Is(R^2),  \Z \rangle$ is interpretable in $R$. The quotient $M(R)/N(R)$ is finite. So there only remains one gap $N(R)\leq Is(R^2)$ on the quotient of which the action of the ring $\Z$ is not necessarily interpretable in $R$. Set $P(R)\define N(R)/Is(R^2)$. For any integer $e\geq 2$ still the fact that $P/eP$ has a basis consisting of the images of elements $u_{m}, \ldots , u_{n-1}$ in $P/eP$ is expressible by first-order formulas. Consequently the fact that $u_m+ Is(R^2), \ldots, u_{n-1}+Is(R^2)$ generate a subgroup of index, say $d$, relatively prime to $e$ in $N(R)+Is(R^2)$ is a first-order property. Here for $e$ we pick the order of the finite group $M(R)/N(R)$, i.e. 
$$e=e_l\cdots e_{m-1}.$$
The reason for this choice will be made clear in the next lemma.

The structure constants $t_{ijk}$ are $t_{ik}$ are fixed integers and depend only on $\bar{u}$ and $R$.\end{proof}

\begin{thm} \label{mainalgS:thm} Assume $R\equiv S$ and $S\models \Phi(\bar{v})$ where $\Phi(\bar{x})$ is the formula obtained in Lemma~\ref{mainalgR:lem}. Then the following hold:
	
	 	\begin{enumerate}
	 		\item 
	 		\begin{enumerate}
	 		\item $v_1+M(S), \ldots , v_{l-1}+M(S)$ is a basis of the free abelian group $S/M(S)$, 	
	 		\item $v_l+N(S), \ldots , v_{m-1}+N(S)$ is a pseudo-basis of the finite abelian group 
	 			 				$M(S)/N(S)$ 
	 		\item  There are elements $w_{m},\ldots ,w_{n-1}$ of $S$ and integers $d_m, \ldots, d_{n-1}$ such that $w_{m}+Is(S^2),\ldots ,w_{n-1}+Is(S^2)$ is a basis of $M(S)/Is(S^2)$ and 	
	 		\begin{align*}\langle d_{m}w_{m}+Is(S^2)&,\ldots ,d_{n-1}w_{n-1}+Is(S^2)\rangle \\
	 		&= \langle v_m+Is(S^2),\ldots ,v_{n-1}+Is(S^2)\rangle\end{align*}
	 			 				 and $gcd(d,e)=1$,  where $d=d_m\cdots d_{n-1}$, and $e=e_l\cdots e_{m-1}$.	 				
	 		 \item $v_n, \ldots , v_M$ is a pseudo-basis of $Is(R^2)$. 
	 		
	 		\end{enumerate}

	      \item for all $1\leq i,j\leq r$ and some fixed integers $t_{ijk}$, $\ds v_iv_j=\sum_{k=n}^rt_{ijk}v_k$.
	 	\item for those $l\leq i \leq r$ such that $e_i < \infty$ and some fixed $t_{ik}$, $\ds e_iv_i=\sum_{k=m}^rt_{ik}v_k$.  In particular if $l\leq i \leq m-1$ then
	 	$$e_iv_i = v_{m+i} +\sum_{k=n}^r t_{ik}v_k.$$
	 	 \end{enumerate}
	\end{thm}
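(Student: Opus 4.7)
The plan is to read off each clause of the formula $\Phi(\bar{x})$ directly: because $R \equiv S$ and $S \models \Phi(\bar{v})$, every first-order sub-clause of $\Phi$ holds of $\bar{v}$ in $S$ with exactly the analogous meaning it had in $R$, provided that all the ideals involved are uniformly definable in both rings. First I would invoke the preceding lemma on uniform definability of $M(R)$, $N(R)$, $Ann(R)$, and $Is(R^2)$ (via the finite-width trick and the isolator formula $\Psi_n$ of equation~\eqref{TypeIso:eqn}) to guarantee that the corresponding ideals $M(S)$, $N(S)$, $Ann(S)$, $Is(S^2)$ are cut out in $S$ by the same formulas. This justifies transferring clauses (1)(a), (1)(b), (1)(d), (2), and (3) essentially verbatim.

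For (1)(a), the quotient $R/M(R)$ is free abelian, and the two-sorted structure $\langle R/M(R), \Z\rangle$ is absolutely interpretable in $R$ (by Lemma~\ref{A(R)-inter:lem} composed with the canonical epimorphism $R/Ann(R) \to R/M(R)$ used in the proof of Corollary~\ref{zinter:cor}). Consequently the sentence asserting that the residues of $x_1, \ldots, x_{l-1}$ form a $\Z$-basis of $R/M(R)$ is first-order, so the analogous assertion holds for $\bar{v}$ in $S$. Clauses (1)(b) and (1)(d) are handled identically: the finite abelian group $M(R)/N(R)$ has its invariant factors $e_l \mid \cdots \mid e_{m-1}$ rigidly encoded by the fixed integers in $\Phi$, while $\langle Is(R^2), \Z\rangle$ is interpretable in $R$, so the assertion that $v_n, \ldots, v_r$ constitute a pseudo-basis of $Is(S^2)$ transfers.

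The main obstacle is (1)(c), because the $\Z$-action on the middle gap $N(R)/Is(R^2)$ is \emph{not} interpretable in $R$. The formula $\Phi$ records only that the residues $v_m + Is(S^2), \ldots, v_{n-1} + Is(S^2)$ generate a subgroup of index $d$ in $N(S)/Is(S^2)$ with $\gcd(d,e) = 1$. I would first observe that $N(R)/Is(R^2) \cong Ann(R)/(Ann(R) \cap Is(R^2)) = Ann(R)/\Delta(R)$ is free abelian by the very construction of an addition $R_0$, and that this rank is a first-order invariant, hence $N(S)/Is(S^2)$ is also free abelian of the same rank. Now applying Smith normal form to the finite-index subgroup $\langle v_m, \ldots, v_{n-1}\rangle + Is(S^2)$ inside this free abelian group produces the desired basis $w_m, \ldots, w_{n-1}$ of $N(S)/Is(S^2)$ together with positive integers $d_m, \ldots, d_{n-1}$ satisfying $\langle d_i w_i + Is(S^2)\rangle = \langle v_i + Is(S^2)\rangle$ and $d_m \cdots d_{n-1} = d$; the coprimality $\gcd(d,e) = 1$ is inherited directly from $\Phi$.

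Clauses (2) and (3) are then immediate: the structure constants $t_{ijk}$ and $t_{ik}$ are fixed integers, so the corresponding quantifier-free ring identities are literally part of $\Phi$ and transfer from $R$ to $S$ by hypothesis. The only delicate point in the whole argument is the non-interpretability of the $\Z$-action on $N(R)/Is(R^2)$, and this is precisely what the coprimality $\gcd(d,e) = 1$ is designed to finesse via Smith normal form; the reason for choosing $e = e_l \cdots e_{m-1}$ in Lemma~\ref{mainalgR:lem} becomes apparent here, since it is exactly this coprimality that will later let us lift the relations $e_i v_i = v_{m+i} + \sum t_{ik} v_k$ into a genuine comparison of $R$ and $S$ at the level of additions.
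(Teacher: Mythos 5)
Your proposal is correct and follows essentially the same route as the paper: clauses (1)(a), (1)(b), (1)(d), (2), (3) transfer by uniformity of the interpretations of $M$, $N$, $Ann$, $Is(R^2)$ and the interpretability of the $\Z$-action on the outer quotients, while (1)(c) is handled by applying the structure theorem (Smith normal form) to the finite-index subgroup $Q(S)=\langle v_m+Is(S^2),\ldots,v_{n-1}+Is(S^2)\rangle$ of $P(S)$ and extracting the $w_i$ and $d_i$, with $\gcd(d,e)=1$ coming from the first-order fact that the $v_i$ residues form a basis of $P(S)/eP(S)$. The only nuance worth spelling out (which the paper also leaves implicit) is that the mod-$e$ basis condition gives $Q(S)+eP(S)=P(S)$, whence $P(S)/Q(S)$ is a finitely generated group equal to $e$ times itself, hence finite of order coprime to $e$.
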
 
	
	\begin{proof} The fact that the $v_i$ satisfy 1.(a),1.(b) and 1.(d) is corollary of the statements from Lemma~\ref{mainalgR:lem} and uniformity of the interpretations. For 1.(c) note that by 1.(c) of Lemma~\ref{mainalgR:lem} the $v_i+Is(S^2)$, $i=m, \ldots ,n-1$ will in general generate a subgroup $Q(S)$ of $P(S)=M(S)/Is(S^2)$ of finite index. By the structure theorem for finitely generated abelian groups there is a basis $\{w_i+Is(S^2): m \leq i\leq n-1\}$ of $P(S)$, and integers $d_i$,  $i=m,\ldots n-1$, such that $d_iw_i+Is(S^2)$ is a basis of $Q(S)$. So $d=d_{m}\cdots d_{n-1}$ is the index of $Q(S)$ in $P(S)$. Recall that the images of the above $w_i$'s have to form a pseudo-basis of $P(S)/eP(S)$. So one can easily check that $gcd(e,d)=1$.
		 
	 For (2) and (3) everything is clear. However the constants $t_{ijk}$ and $t_{ik}$ will not necessarily determine $S$ up to isomorphisms since $\bar{v}$ in general will generate only a subring (of finite-index as an abelian group) of $S$, clear from 1.(c).   
	 
		\end{proof}
\begin{lem}\label{regular-M=N:lem} The following are equivalent for an FDZ-algebra.
\begin{enumerate}
\item $R$ is regular. 
\item For any addition $R_0$, $R\cong R/R_0\times R_0$.
\item $M(R)=N(R)$. 
\end{enumerate}
\end{lem}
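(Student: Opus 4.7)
The plan is to prove the cycle $(2)\Rightarrow(1)\Rightarrow(3)\Rightarrow(2)$. The implication $(2)\Rightarrow(1)$ is immediate from the definition of regularity, since in the decomposition $R\cong R/R_0\times R_0$ the first factor provides the required subring $R_F\supseteq R^2$.

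For $(1)\Rightarrow(3)$ I would proceed by componentwise computation. Given $R\cong R_F\times R_0$, the fact that $R_0\subseteq Ann(R)$ forces all cross-products to vanish, so $R^2=R_F^2\times 0$ and $Ann(R)=Ann(R_F)\times R_0$. Since the addition $R_0$ is isomorphic to the free abelian group $Ann(R)/\Delta(R)$ and in particular torsion-free, one then gets $Is(R^2)=Is(R_F^2)\times 0$, whence $\Delta(R)=\Delta(R_F)\times 0$. The requirement that $R_0$ complements $\Delta(R)$ in $Ann(R)$ forces $Ann(R_F)=\Delta(R_F)$, i.e., $R_F$ is tame. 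A short calculation using $Ann(R_F)\subseteq Is(R_F^2)$ shows $M(R_F)=N(R_F)=Is(R_F^2)$, and transporting through the product gives $M(R)=M(R_F)\times R_0=N(R_F)\times R_0=N(R)$.

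The heart of the proof is $(3)\Rightarrow(2)$. Fix any addition $R_0$; my plan is to show that the subgroup $(R_0+R^2)/R^2$ of $R/R^2$ is pure. Since $R_0\cap R^2\subseteq R_0\cap\Delta(R)=0$, this subgroup is isomorphic to the free abelian group $R_0$, and a pure free-abelian subgroup of a finitely generated abelian group is always a direct summand. Choosing a complement $R_F/R^2\subseteq R/R^2$ with $R_F\supseteq R^2$, I lift back to obtain $R=R_F+R_0$ and $R_F\cap R_0\subseteq R^2\cap R_0=0$, hence $R=R_F\oplus R_0$. Since $R_F\cdot R_F\subseteq R^2\subseteq R_F$ and $R_0\subseteq Ann(R)$, this additive decomposition is automatically a ring direct product $R\cong R_F\times R_0\cong R/R_0\times R_0$, which is $(2)$.

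The main obstacle will be the purity claim, which is the only step that uses the hypothesis $M(R)=N(R)$. Unwinding definitions, purity reduces to the assertion: if $r_0\in R_0$ satisfies $r_0=nz+s$ for some $z\in R$ and $s\in R^2$, then $r_0\in nR_0$. To establish this I would observe that $nz=r_0-s\in Ann(R)+R^2$, so $z\in Is(R^2+Ann(R))=M(R)$; the hypothesis $M(R)=N(R)=Is(R^2)+Ann(R)$ then lets me write $z=u+a_\Delta+a_0$ with $u\in Is(R^2)$, $a_\Delta\in\Delta(R)$ and $a_0\in R_0$. Then $r_0-na_0=s+nu+na_\Delta$ lies in $Is(R^2)$ (using $R^2,\Delta(R)\subseteq Is(R^2)$) and also in $R_0$, hence in $Is(R^2)\cap R_0\subseteq\Delta(R)\cap R_0=0$. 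Therefore $r_0=na_0\in nR_0$, completing the proof of purity and thus of the lemma.
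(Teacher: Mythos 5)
Your proof is correct and follows essentially the same strategy as the paper: a componentwise computation through the product decomposition for $(1)/(2)\Rightarrow(3)$, and for $(3)\Rightarrow(2)$ the observation that $M(R)=N(R)$ forces the image of $R_0$ to be pure, hence a direct summand, in a suitable quotient (you work in $R/R^2$, the paper in $R/Is(R^2)$). Your explicit verification of the purity claim usefully spells out a step the paper leaves implicit when it asserts the existence of a complement to $N(R)/Is(R^2)$.
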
  
\begin{proof} The equivalence of (1) and (2) is clear. 

Let us show that (2) implies (3). So assume $x\in M(R)$. Then for some non-zero $m\in \N$, $mx\in R^2+Ann(R)=R^2\times R_0$. Since $R=R_F \times R_0$ there exists unique $y\in R_F$ and $z\in R_0$ such that $x=y+z$. Since $mx\in R^2 \times R_0$ and $mz\in R_0$ there exist $y_1\in R^2$ and $y_2\in R_0$ such that $my=y_1+y_2$. Since $R^2\leq R_F$ and $my\in R_F$ we have $y_2\in R_F$. Therefore $y_2=0$, $my=y_1$, and $y\in Is(R^2)$. So $x\in N(R)$. 

It remains to show $(3)\Rightarrow (1)$. Consider the canonical map $\pi:R \to R/Is(R^2)$. Since $M(R)=N(R)$, there exists a direct complement $C$ for $N(R)/Is(R^2)$ in $R/Is(R^2)$. It is easy to see that $R=\pi^{-1}(C)\times R_0$ for any addition $R_0$. Since multiplication in $R/Is(R^2)$ is trivial $\pi^{-1}(C)$ is indeed a subring of $R$ and it clearly contains $R^2$.  \end{proof}

\emph{Proof of $ (1) \Rightarrow (2)$ of Theorem~\ref{mainnice:thm}.}  We follow terminology and notation of Lemma~\ref{mainalgR:lem} and Theorem~\ref{mainalgS:thm}. Indeed we will show that $S$ fits the description in Theorem~\ref{mainalgS:thm} if and only if all the conditions in the statement of Theorem~\ref{mainnice:thm} are satisfied by $S$. Let us first consider the case $M(R)\neq N(R)$.  Indeed if $S$ satisfies conditions of Theorem~\ref{mainalgS:thm} then the assignment $u_i \mapsto v_i$, $i=1, \ldots , M$ will extend to a monomorphism $\phi$ of rings since $\bar{u}$ and $\bar{v}$ are pseudo-basis of the same length, periods and structure constants. Only $im(\phi)$ which is the subring of $S$ generated by the $v_i$ may not contain all of $S$. Conversely if $\phi:R\to S$ satisfies the conditions of Theorem~\ref{mainnice:thm} it is clear that the $u_i\in R$ with description from Lemma~\ref{mainalgR:lem} will map under $\phi$ to some $v_i\in S$ as in the Theorem~\ref{mainalgS:thm}.

Now if $M(R)=N(R)$ then $M(S)=N(S)$. By Lemma~\ref{regular-M=N:lem} $X\cong X/X_0\times X_0$, where $X=R,S$ for any additions $R_0$ and $S_0$ of $R$ and $S$ respectively. Now $u_i\mapsto v_i$, $i \neq m, \ldots ,n-1$ will induce an isomorphism between $R/R_0$ and $S/S_0$ while $R_0\cong S_0$ since they are both free abelian groups of the same finite rank.  \qed

\begin{rem} \label{nutral:lem} Note that the elements $v_{m+p}, \ldots, v_n$ and the corresponding $u_i$'s, aside the rank of the subrings they generate which are just abelian groups with zero multiplication, will play no structural role in either of the rings $R$ and $S$ and split from them. Indeed all the structure constants $t_{ik}=0$ and $t_{ijk}=0$ if any of $i$,$j$ or $k$ is between $m+p$ and $n-1$.\end{rem}

\emph{Proof of Theorem~\ref{tame:thm}.} Since By assumption $Ann(R)\leq Is(R^2)$ then for any addition $R_0$ we have $R_0=0$. Now we get a series:
$$R \geq Is(R^2) \geq Ann(R)+R^2 \geq R^2 \geq 0.$$
The only problem is that even though $Is(R^2)$ is definable in $R$ in order for the corresponding formula to define $Is(S^2)$ in an FDZ-algebra $S$, $S$ has to satisfy the infinite type $\{\Psi_n:n\in \N^+\}$ from Equation~\eqref{TypeIso:eqn}. Note that $Ann(X)$ is definable in any algebra $X$ by the same sentence, while $R^2$ is definable in $R$ and the same formula defines $S^2$ is any algebra satisfying the sentence $\phi_w$ in Equation~\eqref{R^2Sen:eqn}. Now assume the order of $Is(R^2)/R^2$ is $q$. Then there exists a first-order sentence $\phi_{Is(R^2)}$ true in $R$ which will imply  $\forall x ( x \in Ann(S) \to qx \in S^2)$ in any FDZ-algebra $S$ satisfying it and therefore implying that $Ann(S)\leq Is(S^2)$ and therefore that $(Ann(S)+S^2)/S^2$ is finite. So indeed to prove the theorem we need to deal with the gaps in the following series:
$$R \geq Ann(R)+R^2 \geq R^2 \geq 0.$$ 
Recall from Corollary~\ref{zinter:cor} that $\langle R/Ann(R), \Z\rangle$ and $\langle R^2, \Z \rangle$ are interpretable in $R$, while by Theorem~\ref{elem-iso-alg:thm} each of them is axiomatized in the class of two-sorted FDZ-algebras by one sentence. It follows now that $\langle R/(Ann(R)+R^2), \Z\rangle$ is interpretable in $R$ while the former was axiomatizable by one sentence.  Therefore the isomorphism type of each quotient coming from the sequence
$$R \geq  Ann(R)+R^2 \geq R^2 \geq 0$$
can be captured by one sentence of $L$. This implies the statement.\qed 
\section{The converse of the characterization theorem}\label{converse:sec}

In this section we prove the converse of Theorem~\ref{mainalgS:thm} and therefore we shall provide a proof for $(2) \Rightarrow (1)$ direction of  Theorem~\ref{mainnice:thm}. 

 Let us fix some notation first. Let $\D$ be a non-principal ultrafilter on an index set $I$. By $R^*$ for a ring $R$ we mean the ultrapower $R^I/\D$ of $R$. The equivalence class of $x \in R^I$ in $R^*$ is denoted by $x^*$. 
\begin{lem}\label{equality:lem} Let $R$ be an FDZ-algebra and let $\D$ be a non-principal ultrafilter on $I$. Then
\begin{enumerate}
\item $(R^2)^*= (R^*)^2$,
\item $(Is(R^2))^*=Is((R^*)^2)=Is((R^2)^*),$
\item $Ann(R^*)=(Ann(R))^*$,
\item If $R_0$ is an addition of $R$ then $(R_0)^*$ is an addition of $R^*$.
\end{enumerate}\end{lem}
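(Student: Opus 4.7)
My plan is to deduce all four statements uniformly from \L o\'s's theorem, using that $R^2$, $Ann(R)$, and $Is(R^2)$ are each defined in both $R$ and $R^*$ by the same single first-order formula.

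For (1), since $R^+$ is finitely generated, $R^2$ has finite width $s$ for some $s\in\N$, so the sentence $\phi_w$ from \eqref{R^2Sen:eqn} holds in $R$ and therefore in $R^*$. The formula
$$\psi(x)\;\define\;\exists x_1,\ldots,x_s,y_1,\ldots,y_s~\Bigl(x=\sum_{i=1}^s x_iy_i\Bigr)$$
then defines $R^2$ in $R$ and $(R^*)^2$ in $R^*$, and \L o\'s yields $(R^2)^* = (R^*)^2$. Similarly, $Ann(R)$ is absolutely defined by $\forall y\,(xy=yx=0)$, so (3) follows immediately.

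For (2), I would exploit that $Is(R^2)/R^2$ is the torsion subgroup of the finitely generated abelian group $R/R^2$, hence finite of some exponent $m$. Then for every nonzero $n\in\N$ the sentence $\Psi_n$ from \eqref{TypeIso:eqn} (applied to $I=R^2$) holds in $R$ and therefore in $R^*$. Thus $Is(R^2)=\{x:mx\in R^2\}$ is defined in $R$ by $\psi(mx)$, and the same formula defines $Is((R^*)^2)$ in $R^*$; \L o\'s gives $(Is(R^2))^* = Is((R^*)^2)$, and the remaining equality with $Is((R^2)^*)$ is immediate from (1).

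For (4), parts (2) and (3) combine to give
$$\Delta(R^*) = Is((R^*)^2)\cap Ann(R^*) = (Is(R^2))^*\cap(Ann(R))^* = (\Delta(R))^*.$$
Moreover, the direct sum $Ann(R)=\Delta(R)\oplus R_0$ transfers to $R^*$ via \L o\'s applied to the first-order statement expressing existence and uniqueness of the decomposition, yielding $Ann(R^*) = \Delta(R^*)\oplus (R_0)^*$. A routine transfer of the ideal axioms shows $(R_0)^*$ is an ideal of $R^*$, so $(R_0)^*$ is an addition of $R^*$. The only substantive point is the uniform first-order definability of $Is(R^2)$: the na\"ive definition involving ``some nonzero $n$'' is not first-order, but finiteness of $Is(R^2)/R^2$ collapses it to the single formula $\psi(mx)$, whose meaning is correctly transferred to $R^*$ thanks to the validity of the $\Psi_n$ in $R^*$.
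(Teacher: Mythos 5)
Your proposal is correct and follows essentially the same route as the paper: finite width of $R^2$ (the sentence $\phi_w$) for part (1), the reduction of $Is(R^2)$ to the single formula ``$mx\in R^2$'' together with the transferred sentences $\Psi_n$ for part (2), the absolute definability of $Ann$ for part (3), and the combination of these with the direct-sum decomposition of $Ann(R)$ for part (4). Your only looseness is attributing the transfer of $Ann(R)=\Delta(R)\oplus R_0$ to \L o\'s --- since $R_0$ is a chosen, not definable, complement this is really the elementary set-theoretic fact that ultrapowers preserve internal direct sums --- but the conclusion is unaffected.
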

\begin{proof} For (1) the inclusion $\geq$ follows from the fact that $(R^I)^2$ is generated by $xy=z$, $x, y\in R^I$, where $z(i)=x(i)y(i)\in R^2$ for $\D$-almost every $i\in I$. But then the equivalence class of $xy$ is in $(R^2)^*$. The other inclusion follows from the fact that $R^2$ is of finite width. Indeed, pick $z\in R^I$ such that the equivalence class $z^*\in (R^2)^*$, and assume that width of $R^2$ is $s$. Then for $\D$-almost every $i\in I$, $z(i)=\sum_{j=1}^s x_j(i)y_j(i)$. Define $z_j\in R^I$, $j=1,\ldots s$, by $z_j(i)=x_j(i)y_j(i)$. Obviously $z^*_j\in (R^*)^2$, $j=1, \ldots ,s$, and $z^*= z^*_1+\cdots +z^*_s$. This implies the result.      

For (2) the equality of the last two terms follows from (1.). Moreover
$$x^*\in Is((R^2)^*)\Leftrightarrow m(x^*)\in (R^2)^* \Leftrightarrow (mx)^*\in (R^2)^* \Leftrightarrow x^*\in (Is(R^2))^*.$$ 
(3) is clear. (4) is implied by (2) and (3) and the definition of an addition. 
\end{proof}

\begin{lem}\label{satabelian:lem} Assume $A$ is a free abelian group of rank $n$ with basis $v_1, \ldots ,v_n$. Let $A^*$ the ultrapower of $A$ over an $\aleph_1$-incomplete ultrafilter $\D$ and let
\begin{itemize}
\item  $(b_{ij})$ be an $n\times n$ matrix with integer entries and the determinant $det((b_{ij}))=\pm 1$
\item  $\alpha_i$, $i=1, \ldots, n$, be elements of the ultrapower $\Z^*$ of the ring of integers $\Z$ over $\D$, such that $p\nmid \alpha_i$ for any prime number $p$, where $|$ denotes division in the ring $\Z^*$.    
\end{itemize} Then there is an automorphism $\psi:A^* \to A^*$ extending $v_i\mapsto \sum_{k=1}^n{\alpha_kb_{ik}}v_k$. \end{lem}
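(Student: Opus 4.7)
The plan is to construct $\psi$ as a composition $\psi = \psi_\alpha \circ \psi_B$ of two abelian group automorphisms of $A^*$, corresponding respectively to the integer matrix $B = (b_{ij})$ and to the scalars $\alpha_i$. Throughout, I identify $A^* \cong (\Z^*)^n$ via the $\Z^*$-basis $v_1^*, \ldots, v_n^*$ obtained as the images of $v_1, \ldots, v_n$ under the diagonal embedding $A \to A^*$.

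First, because $B$ is an integer matrix with $\det B = \pm 1$, its inverse $B^{-1}$ again has integer entries, so the $\Z^*$-linear map $\psi_B$ defined by $v_i^* \mapsto \sum_k b_{ik} v_k^*$ is automatically an abelian group automorphism of $A^*$. It then suffices to construct an abelian group automorphism $\psi_\alpha$ of $A^*$ with $\psi_\alpha(v_i^*) = \alpha_i v_i^*$ for every $i$, since one then checks $(\psi_\alpha \circ \psi_B)(v_i^*) = \sum_k b_{ik}\psi_\alpha(v_k^*) = \sum_k \alpha_k b_{ik} v_k^*$, as required. Because $A^* = \bigoplus_i \Z^* v_i^*$, the map $\psi_\alpha$ can be assembled as a direct sum $\bigoplus_i \phi_{\alpha_i}$ of abelian group automorphisms $\phi_{\alpha_i}$ of $\Z^*$ with $\phi_{\alpha_i}(1) = \alpha_i$. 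Hence everything reduces to the scalar case $n = 1$: given $\alpha \in \Z^*$ with $p \nmid \alpha$ for every prime $p$ (equivalently, $n \nmid \alpha$ for every integer $n > 1$), produce an abelian group automorphism $\phi$ of $\Z^*$ with $\phi(1) = \alpha$.

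For the scalar case, the key observation is that $1$ and $\alpha$ realize the same $1$-type in $\Z^*$ as an abelian group: by Szmielew's classification, $1$-types over $\emptyset$ in a model of $Th(\Z, +, 0)$ are determined by the divisibility profile, and both $1$ and $\alpha$ are nonzero and indivisible by every $n > 1$. Because $\D$ is $\aleph_1$-incomplete, the ultrapower $\Z^*$ is $\aleph_1$-saturated in the countable language of abelian groups (Frayne/Keisler). A standard back-and-forth construction then extends the partial elementary map $1 \mapsto \alpha$ to a full abelian group automorphism of $\Z^*$, using $\aleph_1$-saturation at each stage to realize the appropriate types over the countable parameter sets built up so far.

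The main obstacle is precisely this last step: upgrading the ``same $1$-type'' observation to an actual automorphism (not merely a partial elementary embedding) of $\Z^*$. What makes it work is the combination of $\aleph_1$-saturation (from $\aleph_1$-incompleteness of $\D$) together with the tractable type-space of $Th(\Z, +, 0)$, so that the back-and-forth argument can be carried through. Once $\phi$ is in hand in the one-dimensional case, assembling $\psi_\alpha = \bigoplus_i \phi_{\alpha_i}$ and composing with $\psi_B$ yields the desired $\psi$.
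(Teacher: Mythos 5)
Your decomposition $\psi=\psi_\alpha\circ\psi_B$ is exactly the paper's: since $\det B=\pm 1$ the matrix part is a $\Z^*$-module automorphism for free, and the whole content of the lemma is the existence of an abelian group automorphism of $A^*\cong(\Z^*)^n$ acting diagonally by the $\alpha_k$. Up to that reduction you and the paper coincide.

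The step that does not quite close as written is the back-and-forth. The group $\Z^*$ has cardinality $2^{\aleph_0}$, and $\aleph_1$-incompleteness of $\D$ yields only $\aleph_1$-saturation; unless one assumes CH, a back-and-forth enumeration of all of $\Z^*$ has length $>\aleph_1$, and after $\omega_1$ stages the accumulated parameter set is no longer of size $<\aleph_1$, so $\aleph_1$-saturation no longer guarantees that the needed types are realized. Thus ``$1$ and $\alpha$ have the same divisibility profile'' plus $\aleph_1$-saturation gives a partial elementary map, not yet an automorphism. The paper avoids this by citing the structure theory of $\aleph_1$-saturated (equivalently, algebraically compact, i.e.\ pure-injective) abelian groups from \cite{Szmielew} and \cite{eklof}: because neither $1$ nor $\alpha_k$ is divisible by any prime, each generates a pure subgroup whose hull is a direct summand, and the classification (together with cancellation for pure-injectives) matches the summands and their complements, producing the automorphism $v_k\mapsto\alpha_k v_k$ with no cardinality hypothesis. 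Replacing your back-and-forth by this appeal to the classification (or noting that it suffices to argue under CH, since the lemma is only used to conclude $R^*\cong S^*$ and hence $R\equiv S$, and elementary equivalence is absolute) makes your argument agree with the paper's in full.
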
 
\begin{proof} Recall that $A^*$ is an $\aleph_1$-saturated abelian group. By the structure theory of saturated abelian groups (see either of~\cite{Szmielew} or~\cite{eklof}) there is an automorphism $\eta$ of $A^*$ such that $\eta(v_k)=\alpha_k v_k$, for each $k=1, \ldots n$. Note that the automorphism $\eta$ is not necessarily a $\Z^*$-module automorphism. However since $det(b_{ik})=\pm 1$ there is $\Z^*$-module automorphism of $A^*$ extending $v_i\mapsto \sum_{k=1}^nb_{ik}v_k$. This proves the statement.\end{proof} 
 
\begin{thm}\label{converse} Assume $R$ is a FDZ-algebra with a pseudo-basis $\bar{u}$ as in Lemma~\ref{mainalgR:lem} and $S$ an FDZ-algebra as in Theorem~\ref{mainalgS:thm}. Then $$R\equiv S.$$\end{thm}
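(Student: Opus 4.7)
The plan is to invoke the Keisler--Shelah theorem and reduce $R\equiv S$ to building an isomorphism $\phi\colon R^*\to S^*$ of ultrapowers over a non-principal $\aleph_1$-incomplete ultrafilter $\D$. The naive assignment $u_i^*\mapsto v_i^*$ extends by $\Z^*$-linearity to a ring monomorphism $\phi_0\colon R^*\hookrightarrow S^*$, because $\bar u$ and $\bar v$ share all structure constants $t_{ijk},t_{ik}$ and torsion periods $e_i$ by items (2) and (3) of Lemma~\ref{mainalgR:lem} and Theorem~\ref{mainalgS:thm}. Using Lemma~\ref{equality:lem}, the canonical ideals $R^2$, $Is(R^2)$, $Ann(R)$, $M(R)$, and $N(R)$ commute with ultrapowering, so $\phi_0$ is automatically an isomorphism on the quotients $R^*/M(R^*)$, $M(R^*)/N(R^*)$, and $Is((R^*)^2)$. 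The only obstruction to surjectivity lies in the middle quotient $N(S^*)/Is((S^*)^2)\cong(\Z^*)^{n-m}$, in which the images $v_m^*,\ldots,v_{n-1}^*$ span only a $\Z^*$-sublattice of index $d=d_m\cdots d_{n-1}$, whereas in $R^*$ the corresponding $u_i^*$ span the full free $\Z^*$-module.

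I would repair surjectivity by modifying $\phi_0$ on two blocks of indices while preserving every ring relation. For the ``neutral'' indices $i\in\{m+p,\ldots,n-1\}$ identified in Remark~\ref{nutral:lem}, all structure constants involving $i$ vanish, so I may freely replace $\phi(u_i^*)=v_i^*$ by $v_i^*+a_i$ with $a_i\in Ann(S^*)$. For $i\in\{l,\ldots,m-1\}$, I may replace $\phi(u_i^*)$ by $v_i^*+b_i$ with $b_i\in Ann(S^*)$, which preserves the $e_i$-torsion class in $M(S^*)/N(S^*)$; the torsion relation $e_iu_i^*=u_{m+(i-l)}^*+\sum_{k\geq n}t_{ik}u_k^*$ then automatically defines $\phi(u_{m+(i-l)}^*)=v_{m+(i-l)}^*+e_ib_i$. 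Multiplicativity survives because the $a_i$ and $b_i$ annihilate $S^*$ and the neutral indices contribute no nonzero structure constants. Since $Ann(S^*)+Is((S^*)^2)=N(S^*)$, the images $\bar a_i,\bar b_i$ in $N(S^*)/Is((S^*)^2)$ may be prescribed arbitrarily; in the basis $w_m^*,\ldots,w_{n-1}^*$ supplied by Theorem~\ref{mainalgS:thm}(1)(c) the induced map on $N/Is^2$ then takes the matrix form
\[ M=BD+EX,\qquad D=\mathrm{diag}(d_m,\ldots,d_{n-1}),\quad E=\mathrm{diag}(e_l,\ldots,e_{m-1},1,\ldots,1), \]
with $B\in GL_{n-m}(\Z)$ the fixed unimodular change-of-basis matrix and $X\in M_{n-m}(\Z^*)$ free.

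The final step is to select $X$ with $\det M\in\{\pm 1\}$, upgrading $\phi$ to a $\Z^*$-module, and hence ring, isomorphism $R^*\to S^*$. The coprimality $\gcd(d,e)=1$ from Lemma~\ref{mainalgR:lem}(1)(c) is decisive: modulo any prime $p\mid e$ the term $EX$ enters in a controlled way while $\det(BD)\equiv\pm d$ is a unit modulo $p$, whereas modulo any prime $p\mid d$ the free entries of $X$ can be tuned to produce the required residue. A Chinese Remainder argument assembles these local conditions into a global solution, with $\aleph_1$-saturation of $\Z^*$ (the tool behind Lemma~\ref{satabelian:lem}) supplying the lift from the relevant finite quotient of $\Z$ to an element of $\Z^*$. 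I expect the principal obstacle to be precisely this Diophantine step, namely realising $\det(BD+EX)=\pm 1$ exactly---rather than merely up to a unit---in the face of the block structure of $E$. Once $X$ is in hand, $\phi$ is a ring isomorphism $R^*\to S^*$, and Keisler--Shelah yields $R\equiv S$.
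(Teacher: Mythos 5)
Your setup (Keisler--Shelah, ultrapowers over an $\aleph_1$-incomplete ultrafilter, the identification of the index-$d$ sublattice in $N(S^*)/Is((S^*)^2)$ as the sole obstruction, and the observation that the only admissible moves are perturbations by elements of $Ann(S^*)$ subject to the torsion relations) matches the paper and is correct. But the final Diophantine step is not merely ``the principal obstacle'' --- it is genuinely impossible, and this sinks the proposal as written. In the essential case where the neutral indices have been split off (so $n=m+p$ and $E=\mathrm{diag}(e_l,\ldots,e_{m-1})$ with every $e_i\geq 2$), reducing modulo $e_l$ gives $BD+EX\equiv BD$, hence $\det(BD+EX)\equiv \pm d \pmod{e_l}$ for \emph{every} choice of $X\in M_{n-m}(\Z^*)$ (note $\Z^*/e_l\Z^*\cong \Z/e_l\Z$, so saturation buys nothing here). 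The hypotheses only give $\gcd(d,e)=1$, not $d\equiv\pm1\pmod{e_l}$: already for one generator with $e=7$, $d=3$ the equation $\pm 3+7x=\pm1$ has no solution, yet the theorem must still yield $R\equiv S$. So no $\Z^*$-linear map of your restricted form can be surjective, and no Chinese Remainder argument will repair this.

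The paper's way out is to \emph{drop $\Z^*$-linearity on the addition}. It rescales the basis $w_k$ of $S_0^*$ by nonstandard integers $\alpha_k=d_k+q_k^*e$, where $q_k^*$ is engineered so that no standard prime divides $\alpha_k$ while $\alpha_k\equiv d_k\pmod e$. By the structure theory of $\aleph_1$-saturated abelian groups (Lemma~\ref{satabelian:lem}, via Szmielew/Eklof--Fischer), multiplication by such an $\alpha_k$ is an automorphism of the underlying abelian group of $S_0^*$ even though $\alpha_k$ is not a unit of $\Z^*$; this produces an isomorphism $\psi:R_0^*\to S_0^*$ of abelian groups. One then glues $\psi$ with a $\Z^*$-linear map $\phi^*$ on a complementary subring $R_f^*$, the congruence $\alpha_k\equiv d_k\pmod e$ being exactly what makes the two maps agree on $R_f^*\cap R_0^*=\langle e_iu_i\rangle$. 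The resulting $\eta:R^*\to S^*$ is a ring isomorphism but deliberately not a $\Z^*$-module isomorphism --- which is the idea your proposal is missing.
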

\begin{proof}
In order to prove the statement we prove that ultrapowers $R^*=R^\mathbb{N}/\D$ and $S^*=S^{\mathbb{N}}/\D$ of $R$ and $S$ over any $\omega_1$-incomplete ultrafilter $(\mathbb{N},\D)$ are isomorphic.

  By Remark~\ref{nutral:lem}, $u_k$ and $w_k$, $k=m+p-1,\ldots, n-1$ generate zero multiplication subrings of $R$ and $S$ which split from the respective rings. So just to make notation simpler we assume that
 $$n=m+p,$$
  i.e. $n-m=m-l=p$.

Recall the definition of $Q(S)$ from Theorem~\ref{mainalgS:thm}. Let $B=(b_{ik})$, $l\leq i\leq m-1$, $m\leq k\leq n-1$ be the $(m-l)\times (m-l)$ change of basis matrix between the bases $v_k+Is(S^2)$ and $d_kw_k+Is(S^2)$, i.e. $$v_i=\sum_{k=m}^{n-1}b_{ik}d_kw_k.$$  
  Now recall that $e=e_{l}\cdots e_{m-1}$, $d=d_{m}\cdots d_{n-1}$ and gcd$(d,e)=1$. Assume $\pi$ denotes the set of all prime numbers and that, $\pi_k$ is the set of all prime numbers $p$, such that $p|d_k$, $l\leq k \leq m-1$. Let us denote the $j$'th prime number in $\pi\setminus \pi_k$ by $p_{kj}$ and the product of the first $j$ primes in $\pi\setminus \pi_k$ by $q_{kj}$.
 
 For each $j\in \N$ and for all $l\leq i \leq m-1$ define 
 $$w_{ij}=\sum_{k=m}^{n-1}b_{ik}(d_k+q_{kj}e)w_k.$$
 Now let, $w_i^*\in H^*$ and $q_k^*\in \Z^*$ denote the classes of $(w_{ij})_{j\in \N}$ and $(q_{kj})_{j\in \N}$ respectively. Indeed  \begin{equation}
 \label{wi*:eqn}w^*_i=\sum_{k=m}^{n-1}b_{ik}(d_k+q_k^*e)w_k.
 \end{equation} Let us set $\alpha_k=d_k+q_k^*e$ for each relevant $k$.
 
 Next we claim that the $\alpha_k$ satisfy hypothesis (b) of Lemma~\ref{satabelian:lem}, that is, no prime $p$ divides $\alpha_k=d_k+q_k^*e$ for each $k$,  $k=m, \ldots, n-1$. To prove this we recall that $q_{kj}=p_{k1}\cdots p_{kj}$ where the $p_{k1}, \ldots ,p_{kj}$ are the first $j$ primes that do not divide $d_k$. Pick a prime $p$. If $p\in \pi_d$, i.e. $p|d_k$ and $p|(d_k+q_{kj}e)$, then $p|q_{kj}e$ which contradicts the choice of $q_{kj}$ and the fact that gcd$(d_k,e)=1$. So for such $p$, $p\nmid (d_k+q_{kj}e)$. Now pick a prime $p\in \pi\setminus \pi_k$, i.e $p\nmid d_{k}$. Then $p=p_{kt}$ for some $t\in \N$, meaning that $p$ is a factor of $q_{kj}$ for every $j\geq t$. So $p|q_{kj}e$ for every $j\geq t$. Therefore, for every such $j$ if $p|(d_k+q_{kj}e)$ then $p|d_k$, which is impossible. So for every $j\geq t$, $p\nmid d_k+q_{kj}e$. So indeed for any prime $p$, $p\nmid (d_k+q_{k}^*e)$.

 Let $R_0$ ($S_0$) be the addition of $R$ (resp. $S$) generated by $u_i$ (resp. $v_i$), $i=m, \ldots , n-1$. By Lemma~\ref{equality:lem} the $\Z^*$-submodule $R^*_0$ ($S^*_0$) of $Ann(R^*)$ ($Ann(S^*)$) generated be the $u_i$ ($v_i$), $i=m, \ldots , n-1$ is an addition of $R^*$ (resp. $S^*$) and $R^*_0=(R^*)_0=(R_0)^*$ (the same in $S$). By Lemma~\ref{satabelian:lem} there exists an isomorphism $\psi:R^*_0\to S^*_0$ extending $u_i \to w_i^*$.  
 
 By construction there exists a monomorphism $\phi:R \to S$ of groups such that: 
  \begin{equation*} 
   \phi(u_i)=\left\{
  \begin{array}{ll}
  v_i & \text{if } i\neq m,\ldots, n-1\\ \\
  \sum_{k=m}^{n-1}b_{ik}d_kw_k & \text{if }i= m,\ldots, n-1 \end{array}\right.
   \end{equation*}
  Actually $\phi$ defined above is the same $\phi$ as in Theorem~\ref{mainalgS:thm} only the $v_i$, $m\leq i \leq n-1$, are written with respect to the new basis of $S_0$ consisting of the $w_i$. For each $j\in \N$ one could twist the monomorphism $\phi:R\to S$ to get a new one denoted by $\phi_j:R\to S$ and defined by: 
 \begin{equation*} 
   \phi_j(u_i)=\left\{
  \begin{array}{ll}
  v_i & \text{if }i\neq l, \ldots , n-1\\ \\
  v_i+\sum_{k=m}^{n-1}q_{kj}\hat{e}_ib_{ik}w_k& \text{if }i= l,\ldots ,m-1\\ \\
  \sum_{k=m}^{n-1}(d_kc_{ik}+q_{kj}eb_{ik})w_k & \text{if }i= i_1+1,\ldots, i_1+n \end{array}\right.
   \end{equation*}
where $\hat{e}_i=e/e_i$. Again note that $R$ and $im(\phi_j)\leq S$ are generated by the pseudo-bases of the same lengths, periods and structure constants. Let $\phi^*:R^*\to S^*$ be the monomorphism induced $(\phi_j)_{j\in \N}$. 

Next consider the subring $R^*_f$ of $R^*$ generated by $$\{\alpha u_i, u_j: i\neq l,\ldots,m-1, \alpha\in \Z^*, j=l,\ldots, m-1\}.$$ We assume the same definitions in $S^*$ too. We claim  that $R^*=R^*_f + R^*_0$. Firstly $R^*$ is generated by all the $\alpha u_i$, $\alpha\in \Z^*$ in the obvious manner. All these generators belong to $R^*_f+ R^*_0$ with the possible exceptions when $i=l, \ldots m-1$. However by  Lemma~\ref{equality:lem} 
$$\frac{Is((R^*)^2)+ Ann(R^*))}{Is((R^*)^2+ Ann(R^*))}\cong \frac{Is(R^2)+Ann(R)}{Is(R^2+ Ann(R))}$$
is a finite abelian group and so we only need integer multiples of the $u_i$, $i=l, \ldots m-1$ in the generating set. This proves the claim.
  
Now given $x\in R^*$ there are $y\in R^*_f$ and $z\in R^*_0$ such that $x=y+z$. Now define a map $\eta:R^* \to S^*$ by 
$$\eta(x)= \phi^*(y)+\psi(z).$$ 
To show that $\eta$ is well-defined we need to check if $\phi^*$ and $\psi$ agree on $R^*_f\cap R^*_0$. We note that 
$$R^*_f\cap R^*_0=\langle e_iu_i: l\leq i \leq m-1\rangle,$$
i.e. the subgroup generated by the $e_iu_i$ as above. Now 
\begin{align*}
\phi^*(e_iu_i)&=e_i( v_i+\sum_{k=m}^{n-1}q^*_{k}\hat{e}_ib_{ik}w_k)\\
&= e_iv_i+ \sum_{k=m}^{n-1}q^*_{k}eb_{ik}w_k\\
&= \sum_{k=m}^{n-1}d_kb_{ik}w_k+ \sum_{k=m}^{n-1}q^*_{k}eb_{ik}w_k\\
&= w_{m+i}^*\\
&=\psi (u_{m+i})\\
&=\psi(e_iu_i).
\end{align*}
$\eta$ is a homomorphism since $\phi^*$ and $\psi$ are so and $\psi$ maps a subring of $Ann(R^*)$ into a subring of $Ann(S^*)$. It is injective since both $\phi^*$ and $\psi$ are injective and they agree on $R^*_f\cap R^*_0$. Finally  $$S^*=S^*_f + S^*_0= \phi^*(R^*_f) + S^*_0$$ and by construction $H^*_0=im(\psi)$. Therefore $\eta$ is surjective. We have proved that $\eta: R^* \to S^*$ is an isomorphism of rings and by the Keisler-Shelah's theorem, we have proved that
 $$R \equiv S.$$ 

\end{proof}

\end{document}